\documentclass{article}

\usepackage{epsfig,xspace}
\usepackage{latexsym}
\usepackage[matrix,arrow,curve]{xy}
\xyoption{all}

\usepackage[usenames,dvipsnames,svgnames,table]{xcolor}
\usepackage{amssymb,amsmath,latexsym,amsfonts,amsthm,alltt}
\usepackage[usenames,dvipsnames,svgnames,table]{xcolor}
\usepackage[dvips,usenames]{stmaryrd}
\usepackage{url}
\usepackage{graphicx}
\usepackage{float}
\usepackage{amstext}
\usepackage{cite}
\usepackage{hyperref}
\usepackage{MnSymbol}
\usepackage{relsize}

\newtheorem{defn}{Definition}[section]
\newtheorem{rem}[defn]{Remark}
\newtheorem{thm}[defn]{Theorem}
\newtheorem{lemma}[defn]{Lemma}
\newtheorem{prop}[defn]{Proposition}
\newtheorem{coro}[defn]{Corollary}

\newcommand\crule[3][black]{\textcolor{#1}{\rule{#2}{#3}}}

\newcommand{\ra}{\mbox{$\rightarrow$}}
\newcommand{\lra}{\longrightarrow}
\newcommand{\lla}{\longleftarrow}
\newcommand{\la}{\mbox{$\leftarrow$}}
\newcommand{\onlra}[1]{\stackrel{#1}{\lra}}
\newcommand{\Ra}{\Rightarrow}
\newcommand{\La}{\Leftarrow}

\newcommand{\llra}{\longleftrightarrow}

\newcommand{\midsp}{\;|\;}
\newcommand{\sub}[2]{#1_{{}_{#2}}}
\newcommand{\telos}{\hfill$\leftVdash$}
\newcommand{\type}[1]{{\tt #1}}

\newcommand{\iso}{\backsimeq}


\newcommand{\val}[1]{\mbox{$[\![#1]\!]$}}
\newcommand{\forces}{\Vdash}
\newcommand{\dforces}{\forces^{\!\!\partial}}
\newcommand{\yvval}[1]{\mbox{$(\!|#1|\!) $}}

\newcommand{\infrule}[2]{\frac{\mbox{\rm $#1$}}{\mbox{\rm $#2$}}}
\newcommand{\proves}{\vdash}
\newcommand{\vproves}{\mbox{$\medvert\!\!\!\!\sim$}}

\newcommand{\upv}{\upVdash}
\newcommand{\rperp}{\mbox{${}^{\upv}$}}

\newcommand{\gphi}{{\mathcal  G}(Y)}
\newcommand{\gpsi}{{\mathcal  G}(X)}

\newcommand{\bbox}{\blacksquare}

\newcommand{\lperp}{{}\rperp}


\newcommand{\lbdiamond}{\raisebox{-3.5pt}{\mbox{\Huge {$\filleddiamond$}}}}
\newcommand{\blackdiamond}{\raisebox{-1.5pt}{\mbox{\LARGE {$\filleddiamond$}}}}
\newcommand{\lbbox}{\raisebox{-1.2pt}[0pt][0pt]{\crule[black]{0.27cm}{0.27cm}}\hspace*{1pt}}


\newcommand{\stxb}[2]{\mbox{ST$^\bullet_{#1}(#2)$}}
\newcommand{\styc}[2]{\mbox{ST$^\circ_{#1}(#2)$}}
\newcommand{\stx}[2]{\mbox{ST$_{#1}(#2)$}}
\newcommand{\sty}[2]{\mbox{ST$_{#1}(#2)$}}
\newcommand{\filt}{\mbox{\rm Filt}}

\newcommand{\ufilt}{\mbox{\rm Ult}}
\newcommand{\idl}{\mbox{\rm Idl}}

\newcommand{\lfspoon}{\leftfilledspoon}
\newcommand{\rfspoon}{\rightfilledspoon}

\title{Modal Translation\\ of Substructural Logics}
\author{Chrysafis Hartonas\\
Department of Computer Science and Engineering,\\
University of Applied Sciences of Thessaly (TEI of Thessaly), Greece\\
$\type{hartonas@teilar.gr}$}

\begin{document}
\maketitle

\begin{abstract}
In an article dating back in 1992, Kosta Do\v{s}en initiated a project of modal translations in substructural logics, aiming at generalizing the well-known G\"{o}del-McKinsey-Tarski translation of intuitionistic logic into {\bf S4}. Do\v{s}en's translation worked well for (variants of) {\bf BCI} and stronger systems ({\bf BCW}, {\bf BCK}), but not for systems below {\bf BCI}.  Dropping structural rules results in logic systems without distribution. In this article, we show, via translation, that every substructural (indeed, every non-distributive) logic is a fragment of a corresponding sorted, residuated (multi) modal logic.  At the conceptual and philosophical level, the translation provides a classical interpretation of the meaning of the logical operators of various non-distributive propositional calculi. Technically, it allows for an effortless transfer of results, such as compactness, L\"{o}wenheim-Skolem property and decidability.\\[1mm]
{\bf Keywords}: G\"{o}del translation, Substructural Logics, Resource Conscious Logics, Non-Distributive Logics,  Representation of Lattice Expansions, Sorted Modal Logic
\end{abstract}

\section{Introduction}
\label{intro}
In 1933, Kurt G\"{o}del published a paper \cite{goedel} he had presented the year before in the Mathematics Colloqium in Vienna, chaired by his former teacher, Karl Menger. In his presentation and paper G\"{o}del proposed his well-known interpretation of Intuitionistic  Logic ({\bf IL}) into a certain modal extension of Classical Propositional Logic ({\bf CPL}), now widely known as Lewis's system {\bf S4}. G\"{o}del proposed to translate every {\bf IL} formula $\varphi$ to an {\bf S4}-formula $\varphi^\Box$ by prefixing a $\Box$ to every subformula of $\varphi$. He proved that if {\bf IL}$\;\proves\varphi$, then {\bf S4}$\;\proves\varphi^\Box$ and he advanced the conjecture, later proven by McKinsey and Tarski \cite{MckinseyTarski1948}, that the converse also holds. Grigori Mints \cite{Mints2012} complemented the result by providing a characterization of modal formulas equivalent in {\bf S4} to GMT-translations of intuitionistic formulas. Grzegorczyk \cite{grz} showed that adding to {\bf S4} the axiom (G) $\Box(\Box(p\ra\Box p)\ra p)\ra p$, the (now known as) Grzegorczyk system {\bf Grz} = {\bf S4}+G is strictly stronger than {\bf S4}, it is not contained in {\bf S5} and the same result about the GMT-translation of {\bf IL} into {\bf Grz} is provable. This was also shown to be true for McKinsey's system {\bf S4.1} = {\bf S4}+($\Box\Diamond\varphi\rightarrow\Diamond\Box\varphi$). This quickly led to the investigation of logics, variably called {\em super-intuitionistic}, or {\em intermediate logics} between {\bf IL} and {\bf CPL} and their {\em modal companions}, i.e. modal logics containing {\bf S4}, cf. \cite{kuzne,maksimova1974,chagrov,esakia,bezhan1}, in some cases logics between {\bf S4} and {\bf S5} \cite{dummett}, which culminated in the Blok-Esakia theorem, independently proven by Blok \cite{blok} and Esakia \cite{esakia}. The topological interpretation of {\bf S4} \cite{Mckinsey1944-MCKTAO-6,Mckinsey1946-MCKOCE,MckinseyTarski1948}, interpreting the box operator as an interior operator, provided furthermore the background for investigating dualities between modal companions of superintuitionistic logics and topological spaces \cite{bezhan1}. In all of the above cases distribution is assumed and topological dualities investigated are based on the Priestley \cite{hilary} duality for distributive lattices, or on the Esakia  \cite{esakia,esakia2,esakia3} duality for Heyting algebras.

For logics without distribution, Goldblatt \cite{goldb} provided a translation of Orthologic {\bf O} into the {\bf B} system of modal logic. This quickly led to modal approaches to quantum logic \cite{pessoa,Baltag2011}, as an immediate application. Goldblatt's result is specific to Orthologic and it was Kosta Do\v{s}en \cite{dosen-modal} who embarked in a project of extending the GMT translation to the case of {\em substructural logics}. Do\v{s}en's main result is an embedding of {\bf BCI} (to which he refers as {\bf BC}), {\bf BCK}, {\bf BCW} with intuitionistic negation into {\bf S4}-type modal extensions of versions of {\bf BCI}, {\bf BCK}, {\bf BCW}, respectively, all equipped with a classical-type negation (satisfying double negation and De Morgan laws). The result is established by proof-theoretic means, but the approach does not apply uniformly to substructural logics at large. Indeed, Do\v{s}en points out that difficulties arise when dealing with weak systems such as the non-commutative and/or non-associative full Lambek Calculus {\bf (N)FL} (see \cite{dosen-modal}, Introduction, as well as end of Section 4).

A parallel investigation into {\em subintuitionistic logics} has also developed, initiated by Corsi \cite{corsi-subint} in 1987 and then by Restall \cite{restall-subint} in 1994, followed up by Celani and Jansana \cite{celani-subint-2001} and others. Unlike substructural logics which are proof-theoretically specified, subintuitionistic logics are typically specified in a model-theoretic way, as the logics of Kripke frames for intuitionistic logic with some frame conditions weakened, or dropped. Proof-theoretically, this is reflected as a weakening of the axiomatization of intuitionistic logic. In all cases, however, and as it is typical in mainstream relevance logic \cite{relevance1,relevance2}, distribution is explicitly forced in the axiomatization and we therefore leave these outside the scope of our investigation.
Our focus, instead, is with  modal translations of non-distributive logics, arising as the logics of normal lattice expansions.

Normality for a lattice operator means that it is an additive operator \cite{jt1,jt2}  $f:\mathcal{L}^{i_1}\times\cdots\times\mathcal{L}^{i_n}\lra\mathcal{L}^{i_{n+1}}$ (distributing over binary joins in each argument place), where for each $j$, $i_j\in\{1,\partial\}$ and $\mathcal{L}^\partial$ designates the opposite lattice of $\mathcal{L}$ (usually designated by $\mathcal{L}^{op}$). The tuple $\delta=(i_1,\ldots,i_n;i_{n+1})$ will be referred to as its {\em distribution type}. For example, boxes and diamonds are normal operators $\Diamond:\mathcal{L}\lra\mathcal{L}$, $\Box:\mathcal{L}^\partial\lra\mathcal{L}^\partial$. Implication is a normal operator with distribution type $(1,\partial;\partial)$, in other words it is an additive operator $\ra:\mathcal{L}\times\mathcal{L}^\partial\lra\mathcal{L}^\partial$.

In this article, we restrict attention to the hierarchy of substructural logic systems displayed in Figure \ref{sub-hierarchy}
\begin{figure}[!htbp]
\caption{Hierarchy of Substructural  Systems}
\label{sub-hierarchy}
{\small
\[
\xymatrix{
 &&& {\bf BCK}\\
{\bf NFL}\ar@{-}[r] & {\bf FL}\ar@{-}[r] & {\bf BCI}\ar@{-}[r]\ar@{-}[ur] & {\bf BCW}
}
\]
}
\end{figure}
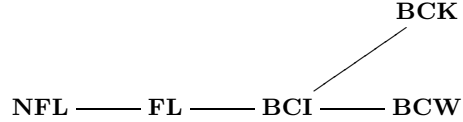
and we present a uniform translation of the logics, proven here to be full and faithful. The target system of the translation is a corresponding sorted, residuated (multi) modal logic. Though we do not treat here Grishin's dual operators \cite{grishin}, it will be obvious that our approach is easily extendible to the Full Lambek-Grishin ({\bf FLG}) system.

The main structure of the argument is the following.  We insist that a substructural logic {\bf SUB} and its modal companion logic {\bf sub.ML}$_2$ are interpreted over the same class $\mathbb{F}$ of frames $\mathfrak{F}$. For modal definability (of the frames for substructural logics) reasons, we remove restrictions typically assumed to hold, such as  frames being separated and reduced (RS-frames \cite{mai-gen}). Models $(\mathfrak{F},\forces)$, $(\mathfrak{F},\models)$ are defined for {\bf SUB} and {\bf sub.ML}$_2$, respectively, and a translation $\varphi^\bullet$ and co-translation $\varphi^\circ$ of sentences of the language of {\bf SUB} into the sorted language of {\bf sub.ML}$_2$ are defined. We then prove that $\varphi\forces\psi$ in {\bf SUB} iff $\varphi^\bullet\models\psi^\bullet$ iff $\psi^\circ\models\varphi^\circ$ in {\bf sub.ML}$_2$. To transfer the result on the provability relation, completeness theorems for each of {\bf SUB} and {\bf sub.ML}$_2$ need to be proven, hence obtaining that $\varphi\proves\psi$ in the proof system for {\bf SUB} iff $\varphi^\bullet\proves\psi^\bullet$ iff $\psi^\circ\vproves\hskip1.5pt\varphi^\circ$ in the sorted proof system for {\bf sub.ML}$_2$, thereby establishing that the translation is full and faithful.

Lacking distribution, substructural logics are interpreted \cite{suzuki8,mai-gen,mai-grishin} in sorted frames $\mathfrak{F}=(X,\upv,Y)$ with additional structure (to account for the residuated operators). As we detail in Section \ref{sorted section}, the dual complex algebra of the frame can be taken to be either the formal concept lattice of the frame (equivalently, its complete lattice $\mathcal{G}(X)$ of Galois-stable sets), or the sorted, residuated modal algebra $\largediamond:\powerset(X)\leftrightarrows\powerset(Y):\lbbox$ associated to the frame, where the residuated operators are generated by the complement $I$ of the Galois relation $\upv$ of the frame. Additional frame relations generate operators both on powerset algebras and on sets of Galois stable sets where, roughly, the latter are the Galois closures of the former (see \cite{sdl-exp,discr,discres}) and exploiting this connection leads to the possibility for a modal translation of substructural (non-distributive, more generally) logics.

Logically, the sorted, residuated modal logics we consider in this article arise as a synthesis in the sense of \cite{synthesis} of classical propositional logic and the logic of residuated Boolean algebras, combined together by means of a logical adjunction (a pair of sorted, residuated operators).

From an algebraic point of view, we consider the logics of sorted, residuated modal algebras $\Diamond:\mathcal{A}\leftrightarrows\mathcal{B}:\bbox$, $a\leq\bbox b$ iff $\Diamond a\leq b$, where both $\mathcal{A,B}$ are Boolean algebras and where $\mathcal{A}$ is moreover a residuated Boolean algebra \cite{residBA} with operators $\lfspoon,\odot,\rfspoon$, i.e. $a_1\leq c\lfspoon a_2$ iff $a_1\odot a_2\leq c$ iff $a_2\leq a_1\rfspoon c$. If the full Lambek-Grishin calculus is to be considered, then both $\mathcal{A,B}$ are residuated Boolean algebras. These structures serve as algebraic models to the sorted, residuated modal logics we consider in this article. Note that the set $\mathcal{A}_\bullet=\{\bbox b\midsp b\in\mathcal{B}\}$ is a meet sub-semilattice of $\mathcal{A}$ which has a full lattice structure (non-distributive, in general) when defining joins in $\mathcal{A}_\bullet$ by setting $a\veebar a'=\bbox(\Diamond a\vee\Diamond a')$. Similarly for the set $\mathcal{B}_\circ=\{\Box a\midsp a\in\mathcal{A}\}$ and they are dually isomorphic $\mathcal{A}_\bullet\iso\mathcal{B}^\partial_\circ$. In the case of interest in this article, the closure operator $\bbox\Diamond$ carries over the residuation structure of $\mathcal{A}$ to $\mathcal{A}_\bullet$.  More precisely, our representation results \cite{sdl-exp,discres} have established the following:

\begin{thm}[\!\!\cite{sdl-exp,discres}]\rm
Every residuated lattice $\mathcal{L}=(L,\wedge,\vee,0,1,\la,\circ,\ra)$ is a sublattice of the lattice $\mathcal{A}_\bullet$ of a sorted, residuated modal algebra \mbox{$\Diamond:\mathcal{A}\leftrightarrows\mathcal{B}:\bbox$} where, moreover, $(\mathcal{A},\lfspoon,\odot,\rfspoon)$ is a residuated Boolean algebra and where the lattice operator $\circ$ is the closure of the restriction of $\odot$ on $\mathcal{A}_\bullet$, while its residuals $\la,\ra$ are simply the restrictions of the residuals \mbox{$\lfspoon,\rfspoon$ of $\odot$ on $\mathcal{A}_\bullet$.}\telos
\end{thm}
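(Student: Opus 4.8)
The plan is to realize the representing structure as the complex algebra of a polarity frame built canonically from $\mathcal{L}$, and then to recover $\mathcal{L}$ inside its Galois-stable part. First I would take $X=\filt(\mathcal{L})$, the set of lattice filters of $\mathcal{L}$, and $Y=\idl(\mathcal{L})$, the set of lattice ideals, with Galois relation $\upv\subseteq X\times Y$ defined by $F\upv J$ iff $F\cap J\neq\emptyset$, and complementary relation $I=(X\times Y)\setminus\upv$. Writing $U^{\upv}=\{y:\forall x\in U,\ x\upv y\}$ and $V^{\upv}=\{x:\forall y\in V,\ x\upv y\}$, the stable sets $U=U^{\upv\upv}$ form a complete lattice $\mathcal{G}(X)$, a completion of $\mathcal{L}$. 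One then checks that $e(a)=\{F\in X:a\in F\}$ is stable (the principal filter $\uparrow a$ does the work), that $e$ is injective and order-reflecting, that $e(a\wedge b)=e(a)\cap e(b)$ and $e(a\vee b)=(e(a)\cup e(b))^{\upv\upv}$, and that $e(0)$ and $e(1)$ are the bounds of $\mathcal{G}(X)$; so $e$ embeds $\mathcal{L}$ as a (bounded) sublattice of $\mathcal{G}(X)$.

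Next I would set $\mathcal{A}=\powerset(X)$ and $\mathcal{B}=\powerset(Y)$ and define $\Diamond:\mathcal{A}\to\mathcal{B}$ by $\Diamond U=\{y:\exists x\in U,\ xIy\}$ and $\bbox:\mathcal{B}\to\mathcal{A}$ by $\bbox V=\{x:\forall y\,(xIy\to y\in V)\}$. Then $\Diamond$ is completely additive, $\Diamond\dashv\bbox$ (that is, $U\subseteq\bbox V$ iff $\Diamond U\subseteq V$), the composite $\gamma=\bbox\Diamond$ is a closure operator on $\mathcal{A}$, and its algebra of closed sets is exactly $\mathcal{A}_\bullet=\{\bbox V:V\in\mathcal{B}\}$. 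The identification already carried out in the cited representation work is that the $\gamma$-closed sets coincide with the $\upv$-stable sets, so $\mathcal{A}_\bullet=\mathcal{G}(X)$; in particular $\mathcal{A}_\bullet$ is the complete lattice with joins $U\veebar U'=\bbox(\Diamond U\vee\Diamond U')$ into which $e$ embeds $\mathcal{L}$, which settles the lattice part of the statement.

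It remains to put a residuated Boolean structure on $\mathcal{A}$ and to match it with $\circ,\la,\ra$. For this I would introduce the canonical ternary relation $R_\circ\subseteq X\times X\times X$ dual to $\circ$, namely $R_\circ F_1F_2F_3$ iff $x\in F_1$ and $y\in F_2$ imply $x\circ y\in F_3$ (equivalently, $F_3$ contains the filter generated by $\{x\circ y:x\in F_1,\ y\in F_2\}$), and let $\odot,\lfspoon,\rfspoon$ be the complex operations it generates on $\mathcal{A}$, with $U_1\odot U_2=\{F_3:\exists F_1\in U_1\,\exists F_2\in U_2\ R_\circ F_1F_2F_3\}$ and $\lfspoon,\rfspoon$ its residuals. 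Since $R_\circ$ is a ternary relation on the plain set $X$, $(\mathcal{A},\lfspoon,\odot,\rfspoon)$ is automatically a residuated Boolean algebra, so the remaining work is: (i) to show that $\lfspoon$ and $\rfspoon$ send pairs of $\gamma$-closed sets to $\gamma$-closed sets, so that they restrict to operations on $\mathcal{A}_\bullet$; (ii) to show these restrictions are $\la$ and $\ra$ under $e$, i.e. $e(z\la y)=e(z)\lfspoon e(y)$ and $e(x\ra z)=e(x)\rfspoon e(z)$ --- one unwinds the left-hand memberships through $R_\circ$, reduces the quantifier over filters to principal filters using monotonicity of $\circ$, and closes the loop with residuation in $\mathcal{L}$ and closure of filters under meets; and (iii) to show that $\gamma(U_1\odot U_2)$ is $\gamma$-closed for $\gamma$-closed $U_1,U_2$ and that, transported through $e$, it is $\circ$ (on the image of $e$ one in fact computes $e(a)\odot e(b)=e(a\circ b)$ directly, so $\gamma$ acts vacuously there, but on $\mathcal{A}_\bullet$ at large the closure is genuinely required). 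Granting (i)--(iii), residuation of the induced $\circ$ on $\mathcal{A}_\bullet$ by $\la,\ra$ follows from residuation of $\odot$ by $\lfspoon,\rfspoon$ on $\mathcal{A}$ together with the fact that $\gamma$ is left adjoint to the inclusion $\mathcal{A}_\bullet\hookrightarrow\mathcal{A}$: for $\gamma$-closed $U,V,W$ one has $\gamma(U\odot V)\subseteq W$ iff $U\odot V\subseteq W$ iff $U\subseteq W\lfspoon V$.

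The main obstacle is step (i): unlike the arbitrary ternary relation it is built from, the canonical $R_\circ$ must be shown compatible with the Galois connection $\upv$ in the precise sense that its residual ``box-like'' operations $\lfspoon,\rfspoon$ already return stable sets, so that no closure need be applied to them, in sharp contrast with $\odot$ itself, which is only join-preserving and must be followed by $\gamma$. Conceptually this asymmetry is the heart of the matter --- residuals of a normal (additive) operator are meet-continuous and interact smoothly with the closure operator, whereas the product does not preserve the closure --- and technically it comes down to verifying the closure/smoothness conditions on $R_\circ$ relative to $\upv$ that single out the frame relations inducing well-defined operations on $\mathcal{G}(X)$, conditions which the canonical relation built from $\circ$ satisfies. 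A last bookkeeping point: should the signature of $\mathcal{L}$ carry further identities on $\circ$ (a unit, associativity, exchange, and so on, according to which system of the hierarchy one is representing), one additionally checks that $R_\circ$ satisfies the first-order frame correspondents of those identities, whence the induced operation inherits them on $\mathcal{A}_\bullet$.
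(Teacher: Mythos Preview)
The paper does not give a self-contained proof of this theorem; it is stated with attribution to \cite{sdl-exp,discres} and marked \telos. What the paper does contain is the machinery from which such a proof is assembled (Sections~\ref{canext section}, \ref{operators on sets section}, \ref{stable ops section}, and Lemmas~\ref{sub la ra}, \ref{sub frame lemma}, \ref{sub int lemma}), and your proposal follows that same canonical-frame route: take $X=\filt(\mathcal L)$, $Y=\idl(\mathcal L)$, $F\upv J$ iff $F\cap J\neq\emptyset$, define $R_\circ$ via the point operator $\widehat{\circ}$, and read off $\mathcal{A}=\powerset(X)$, $\mathcal{B}=\powerset(Y)$, $\mathcal{A}_\bullet=\mathcal G(X)$. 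So the overall architecture matches.

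There is one substantive difference in how your step (i) is handled. You propose to verify \emph{directly} that $\lfspoon,\rfspoon$ applied to stable inputs return stable outputs, and you flag this as the main obstacle. The paper (in the paragraph preceding Lemma~\ref{sub la ra} and in equation~\eqref{circ dual}) takes the dual route: it first shows that the Galois-dual operator $\bigovert^{\!\partial}$ on co-stable sets is an intersection of sets of the form $R^{\partial 11}uu'$ and therefore distributes over arbitrary intersections; by duality, $\bigovert$ then distributes over arbitrary joins in $\mathcal G(X)$, so residuals $\La,\Ra$ automatically exist \emph{within} $\mathcal G(X)$. Lemma~\ref{sub la ra} then identifies these residuals with the restrictions of $\tilde{\La},\tilde{\Ra}$ via the equivalence $A\bigovert F\subseteq C\Leftrightarrow A\bigodot F\subseteq C$ for stable $C$. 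This order of argument avoids having to check stability of $A\tilde{\Ra}C$ by hand: stability comes for free from the existence of residuals in $\mathcal G(X)$, and the identification with $\tilde{\Ra}$ is a one-line adjunction calculation. Your direct approach is not wrong, but it is harder, and the ``compatibility conditions on $R_\circ$ relative to $\upv$'' you gesture at are exactly what the dual-operator argument packages cleanly. Your steps (ii) and (iii), and the final residuation transfer via $\gamma\dashv$ inclusion, are correct and line up with the paper's treatment.
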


The target system of the translation can be therefore taken to be a corresponding sorted, residuated (multi) modal logic. Options exist as to the specification of the signature of the target companion modal logic and, in accordance with the previous discussion, we opt  for  the following sorted modal language
\begin{eqnarray*}
\alpha &:=& P_i\;(i\in\mathbb{N})\midsp\neg \alpha\midsp \alpha\wedge \alpha\midsp \lbbox \beta\midsp \alpha\odot \alpha\midsp \alpha\lfspoon \alpha\midsp \alpha\rfspoon \alpha \\
\beta &:=& Q_i\;(i\in\mathbb{N})\midsp\neg \beta\midsp \beta\wedge \beta\midsp \largesquare \alpha\midsp \beta\ostar\beta \midsp \beta\leftspoon\beta\midsp\beta\rightspoon\beta
\end{eqnarray*}
for the full Lambek-Grishin ({\bf FLG}) calculus, where $\odot,\ostar$ are residuated binary diamonds, with residuals $\lfspoon,\rfspoon$ and $\leftspoon,\rightspoon$, respectively. However, for the purposes of this article, focusing on the systems of Figure \ref{sub-hierarchy}, we restrict to the sublanguage without the $\leftspoon,\ostar,\rightspoon$ logical operators.

As an application for the usefulness of the result, we extend the standard translation of modal logic into first-order logic to the two-sorted case. Using established results \cite{enderton,FOL-mortimer,FOL-2var,FOL-decision} for sorted first-order logic and for its  two-variable fragment we obtain compactness, finite model property and decidability, as well as the L\"{o}wenheim-Skolem property for just about any substructural logic. A complexity bound for the satisfiability problem of the two-variable fragment of first-order logic is also known \cite{FOL-decision}, which can be probably improved for the particular fragments at hand.

The structure of this article is as follows.

Section \ref{prelims} briefly goes through some necessary preliminaries, presenting the basics of sorted frames, running through a quick presentation of substructural logics and reminding the reader of the basics on canonical extensions of lattice expansions.

Section \ref{rel-section} studies sorted frames with relations and their associated complex algebra (in fact, algebras (plural), as both a sorted algebra and an algebra of Galois stable sets are of significance) and it draws on recent work by this author on representation and duality for normal lattice expansions \cite{sdl-exp,pnsds,discr,discres}.

Section \ref{modal-t-section} specifies the proposed modal translation and the argument that the translation is full and faithful is detailed. The main structure of the argument is that given a substructural logic {\bf SUB} and its sorted modal companion logic {\bf sub.ML}$_2$ a translation $\varphi^\bullet,\psi^\bullet$ of {\bf SUB}-sentences $\varphi,\psi$ is defined and it is shown that $\varphi^\bullet\models\psi^\bullet$ in some {\bf sub.ML}$_2$-model $\mathfrak{M}$ iff $\varphi\forces\psi$ in an associated {\bf SUB}-model $\mathfrak{N}$ (produced by just tampering with the interpretation function of $\mathfrak{M}$ and allowing only Galois stable sets to be in its range). To conclude from this that the sequent $\varphi\proves\psi$ is provable in {\bf SUB} iff its translation $\varphi^\bullet\proves\psi^\bullet$ is provable in the companion logic {\bf sub.ML}$_2$, we establish soundness and completeness theorems  of both {\bf SUB} and {\bf sub.ML}$_2$ over the {\em same} class of frames.

Some consequences regarding properties of the logics we consider are then presented in Sections \ref{fol trans} and \ref{sort reduction section}. Pointers for generalization of the results to just about any non-distributive logic are suggested in the Conclusions section \ref{conc-section}.

\section{Review and Technical Preliminaries}
\label{prelims}
\subsection{Sorted Frames}
\label{sorted section}
Single-sorted modal systems cannot express a fact such as that the set $W$ of worlds (states, points) is partitioned in a number of subsets, which is typical of domains populated by distinct {\em types} of individuals. Sorted modal logic arises precisely when taking into consideration the typing of the domain of the frame.
Many-sorted modal logic has so far received little attention. Yde Venema's \cite{yde-points-lines}, preceded by Balbiani's  modal multi-logic of geometry \cite{balbiani-multilogic}, employs a two-sorted system whose intended application domain is the projective plane, the two sorts being points and lines. Steven Kuhn's  PhD dissertation \cite{kuhn-sorted}, supervised by Kit Fine, is perhaps the only source where a comprehensive study of many-sorted modal logics is carried out, dating back in 1976 and with a lot having changed in the meantime, as far as the theory of modal logic is concerned. Venema and Balbiani's work is developed in the context of spatial logics, where individuals of distinct sorts populate the application domain of interest. Two-sorted frames have been the focus of attention in other contexts, as well, such as Wille's formal concept analysis (FCA) \cite{wille} and Hardegree's logic of natural kinds \cite{hardegree}, but they were not accompanied by an interest in the two-sorted modal logic of the frames, though the approach seems to suit the needs of FCA extensions such as temporal FCA \cite{wolff}, or rough FCA \cite{kent}. Explicitly related to modal logics are also the sorted frames of  \cite{bezhan2,bezhan3}, called {\em step-frames}, which are structures $(X,Y,f,R)$ where $f:X\lra Y$ and $R\subseteq X\times\powerset(Y)$, with a sorted diamond operator $\sub{\lozenge}{R}:\powerset(Y)\lra\powerset(X)$, used in defining approximations of free algebras of modal logics.

The class of {\em two-sorted frames} we consider consists of triples $(X,\upv, Y)$, where $X,Y$ are non-empty sets and $\upv\;\subseteq X\times Y$ is a binary relation (to be called the {\em Galois relation} of the frame). Two-sorted frames may be concretely regarded as {\em information systems}, or {\em object-attribute systems}, where $X$ is the set of formal objects, $Y$ is the set of formal attributes of objects and $\upv$ is the attribute assignment map.
In general, a frame will have additional relations $R_i, i\in I$, of various sorting (contained in some product with factors  $X$, $Y$).

Sorted frames generalize plain Kripke frames. Indeed, if $X=Y$ and $\upv$ is the non-identity relation $\neq$, then both maps of the Galois connection \eqref{galois} are the set-complementation operator $\lperp U=-U=U\rperp$.

The Galois relation of the frame generates a Galois connection (hence its naming)
\begin{eqnarray}
U\rperp&=\{y\in Y\midsp\forall x\in U\; x\upv y\} & \lperp V\;=\{x\in X\midsp \forall y\in V\;x\upv y\}\label{galois}
\end{eqnarray}
and, by composition of the Galois maps, a closure operator on each of $\powerset(X)$ and $\powerset(Y)$.  The relation $x\preceq z$ iff $\{x\}\rperp\subseteq\{z\}\rperp$ iff $\forall y\in Y\;(x\upv y\lra z\upv y)$ is easily seen to be a preorder on $X$. Similarly for $Y$. A frame is {\em separated} ({\em clarified}) \cite{wille2} if the preorder $\preceq$ is a partial order. Separation turns out to be necessary in order to prove discrete, or Stone  dualities for various classes of lattices with operators \cite{discr,discres}, but it will not be assumed for frames in this article. The reason for this is  that our argument is mostly semantically based in that it requires that we interpret both the substructural and the sorted modal logic systems over the same class of frames and we wish to ensure modal definability of the frames (by the modal companion logics of the substructural systems we examine).

The closure operators can be equivalently regarded as obtained by composition of the residuated pair of maps generated by the complement $I$ of the attribute assignment map $\upv$, i.e. $\lperp(U\rperp)=\lbbox\largediamond U$ and $(\lperp V)\rperp=\largesquare\lbdiamond V$, where $\lperp V=\lbbox(-V), U\rperp=\largesquare(-U)$ and $\largesquare=-\largediamond-,\; \lbdiamond=-\lbbox-$,
\begin{eqnarray}
\largediamond U&=&\{y\in Y\midsp\exists x\in X\;(xI y\wedge x\in U)\}\nonumber \\
\lbbox V&=&\{x\in X\midsp \forall y\in Y\;(xI y\lra y\in V)\}\label{resid}
\end{eqnarray}
Where $I$ is the complement of $\upv$, the following conditions are assumed.
\begin{equation}
\forall x\in X\exists y\in Y\;xIy\hskip1.5cm \forall y\in Y\exists x\in X\;xIy
\label{D-cond}
\end{equation}
The frame conditions \eqref{D-cond} correspond to the inclusions $\lbbox B\subseteq\lbdiamond B$ and $\largesquare A\subseteq\largediamond A$. This implies that $\largesquare\emptyset=\largesquare(-X)=X\rperp=\emptyset$ and, similarly, $\lbbox\emptyset=\emptyset$.

A subset $A$ of $X$ is {\em Galois stable}, or just {\em stable}, if $\lperp(A\rperp)=A$ (equivalently, if $A=\lbbox\largediamond A$). Dually, a subset $B$ of $Y$ is {\em co-stable} if $(\lperp B)\rperp= B$ (equivalently, $B=\largesquare\lbdiamond B$). Stable and co-stable sets are $\preceq$-increasing. Indeed, if $x\in A$ and $x\preceq z$, i.e. $\{x\}\rperp\subseteq \{z\}\rperp$, then from $\{x\}\subseteq A$ we obtain $A\rperp\subseteq \{x\}\rperp\subseteq \{z\}\rperp$, hence $z\in\lperp(\{z\}\rperp)\subseteq A$, by stability of $A$. Similarly for co-stable sets.

A {\em formal concept} \cite{wille2} is a pair consisting of a stable and a co-stable set $(A,B)$ such that $B=A\rperp$ (and then also $A=\lperp B$). The {\em formal concept lattice} of the frame is the complete lattice of formal concepts, with operations defined by
\[
\bigwedge_{i\in I}(A_i,B_i)=(\bigwedge_{i\in I} A_i,\bigvee_{i\in I}B_i)\hskip2cm \bigvee_{i\in I}(A_i,B_i)=(\bigvee_{i\in I}A_i,\bigwedge_{i\in I}B_i)
\]
where meets and joins on the right hand side of the equations are taken in the complete lattice of (co)stable sets, where meets are set intersections and joins are closures of unions. Clearly, the formal concept lattice is isomorphic to the lattice of concept extents (the complete lattice $\gpsi$ of Galois stable subsets of $X$) and dually isomorphic to the lattice of concept intents (the complete lattice $\gphi$ of the co-stable subsets of $Y$).

For a frame $\mathfrak{F}=(X,\upv,Y)$, let $\mathfrak{Con}(\mathfrak{F})$ be its formal concept lattice and $\mathfrak{Res}(\mathfrak{F})$ be its associated sorted, residuated modal algebra.
It is evidently only a matter of focus which algebraic structure we decide to consider as the {\em complex algebra} $\mathfrak{F}^+$ of the frame $\mathfrak{F}$. The two algebraic structures are not unrelated and, in fact, $\mathfrak{Con}(\mathfrak{F})$ is isomorphic to $\mathcal{A}_\bullet$ and it therefore sits inside $\mathfrak{Res}(\mathfrak{F})$ as a meet sub-semilattice of $\mathfrak{Res}(\mathfrak{F})$.

\begin{rem}[Notational Conventions]\rm
\label{notation1}
For a sorted frame $(X,\upv,Y,\ldots)$, perhaps with additional structure, $\upv\;\subseteq X\times Y$ always designates the Galois relation. $(\;)\rperp, \lperp(\;)$ designate the maps of the induced Galois connection \eqref{galois}. The complement of $\upv$ will be consistently designated by $I$ and it generates a pair of residuated set-operators $\largediamond:\powerset(X)\leftrightarrows\powerset(Y):\lbbox$.

$\preceq$ is the preorder on each of $X,Y$ defined by $\{x\}\rperp\subseteq\{z\}\rperp$, for $x,z\in X$ and, similarly, $v\preceq y$ iff $\lperp\{v\}\subseteq\lperp\{y\}$, for $y,v\in Y$.

We typically and consistently use $x,z$ to range over $X$ and $y,v$ to range over $Y$.

For  $u$ in $X$, or in $Y$, we let $\Gamma u$ be the $\preceq$-upset, $\Gamma u=\{w\midsp u\preceq w\}$. Without separation in the frame, $\Gamma u=\Gamma u'$ only implies that $u\preceq u'\preceq u$, without forcing that $u=u'$.

Given a (two-sorted) frame $(X,\upv,Y)$, a sorted frame relation $R$ is any relation $R\subseteq \prod_{i=1}^{n+1}Z_i$, where for each $i$, $Z_i\in\{X,Y\}$. To keep track of the various relations and their sorting we make the convention to indicate the sorting by an upper index, a string of $1$'s and $\partial$'s, with 1 pointing to $X$ and $\partial$ pointing to $Y$. Thus, for example, $R^{11\partial}$ is a relation contained in the cartesian product $X\times(X\times Y)$.
An $(n+1)$-ary relation $R\subseteq Z\times\prod_{i=1}^n Z_i$ will  be always written in the form $zRz_1\cdots z_n$.

The sets obtained by not filling one argument place will be written in the form $zRz_1\cdots z_{i-1}(\;)z_{i+1}\cdots z_n=\{z_i\midsp zRz_1\cdots z_i\cdots z_n\}$. For a two place relation we write $xR$ and $Rz$, rather than $xR(\;)$ and $(\;)Rz$, respectively. Similarly, for a 3-place relation we write $Rzu,xRz$, rather than $(\;)Rzu$ and $xRz(\;)$, respectively, but we use $xR(\;)u$ when the second argument place is not filled.
\end{rem}

\subsection{A Substructural Review}
\label{sub review}
Substructural logics arise from the Gentzen system {\bf LJ} for intuitionistic logic by dropping a combination of the structural rules of exchange, contraction and weakening  and expanding the logical signature of the language to include the operator symbols $\circ$ (fusion, cotenability), $\la$ (reverse implication) and, often, a constant $\mathfrak{t}$.

We let {\bf NFL} be the system with all structural rules dropped (including  the association rule), which is precisely the non-associative Full Lambek calculus.
We write {\bf FL} for the associative Lambek calculus.
For $r\subseteq\{c,e,w\}$ we designate by ${\bf (N)FL}_r$ the system resulting by adding to {\bf (N)FL} the structural rules in $r$ (where $c$ abbreviates `contraction', $e$ abbreviates `exchange' and similarly for $w$ and `weakening'). With the exception of ${\bf FL}_{ecw}$, which is precisely {\bf LJ}, distribution of conjunctions over disjunctions and conversely does not hold,
unless explicitly added to the axiomatization (an option typically taken by mainstream relevance logicians \cite{relevance1,relevance2}).

The algebraic semantics of these systems has been investigated by Hiroakira Ono, Nick Galatos  \cite{ono-galatos,ono1,ono2,ono3,ono4} and several others.

An {\bf FL}-algebra is a structure $\langle L,\leq,\wedge,\vee,0,1,\la,\circ,\ra,\mathfrak{t}\rangle$ where
\begin{enumerate}
  \item $\langle L,\leq,\wedge,\vee,0,1\rangle$ is a bounded lattice
  \item $\langle L,\leq,\circ,\mathfrak{t}\rangle$ is a partially-ordered monoid ($\circ$ is monotone and associative
      and $\mathfrak{t}$ is a two-sided identity element $a\circ\mathfrak{t}=a=\mathfrak{t}\circ a$)
  \item $\la,\circ,\ra$ are residuated, i.e. $a\circ b\leq c$ iff $b\leq a\ra c$ iff $a\leq c\la b$
  \item for any $a\in L$, $\;a\circ 0=0=0\circ a$
\end{enumerate}

An {\bf FL}-algebra is commonly referred to as a {\em residuated lattice} \cite{galatos-book}. {\bf FL}-algebras (residuated lattices) are precisely the algebraic models of the associative full Lambek calculus. By residuation, the distribution types of the operators are $\delta(\circ)=(1,1;1), \delta(\ra)=(1,\partial;\partial)$ and $\delta(\la)=(\partial,1;\partial)$.

An {\bf FL}$_e$-algebra adds to the axiomatization the exchange (commutativity) axiom $a\circ b = b\circ a$ for the cotenability operator (in which case $\la$ and $\ra$ coincide). It corresponds to the system {\bf BCI} studied by Do\v{s}en \cite{dosen-modal}.

An {\bf FL}$_{ew}$-algebra adds the weakening axiom $b\circ a\leq a$, as well, in which case combining with commutativity $a\circ b\leq a\wedge b$ follows. In addition, by $1\circ\mathfrak{t}\leq\mathfrak{t}$, the identity $\mathfrak{t}=1$ holds in {\bf FL}$_{ew}$-algebras.   {\bf FL}$_{ew}$-algebras are also referred to in the literature as {\em full {\bf BCK}-algebras},  corresponding to full {\bf BCK}-logic, resulting from {\bf BCK} whose purely implicational signature is expanded to include conjunction and disjunction connectives, alongside the cotenability logical operator and the constants $0,1$. Algebraically, they constitute the class of {\em commutative integral residuated lattices} \cite{galatos-book}.

Similarly, an {\bf FL}$_{ec}$-algebra, adds the contraction axiom $a\wedge b\leq a\circ b$ (a distinguished case of which is square increasingness $a\leq a\circ a$). {\bf FL}$_{ec}$-algebras are also known as {\bf BCW}-algebras.

The language of the Lambek Calculus (associative, or not) that we consider  is displayed below,
\[
L\ni\varphi\;:=\; p_i\;(i\in\mathbb{N})\midsp\top\midsp\bot\midsp
\varphi\wedge\varphi\midsp\varphi\vee\varphi\midsp\varphi\la\varphi\midsp\varphi\circ\varphi \midsp\varphi\ra\varphi
\]
where we omit the constant $\mathfrak{t}$, avoiding matters of secondary interest.
The use of the names {\bf BCI}, {\bf BCW} and {\bf BCK} is more widespread than the use of the names {\bf FL}$_e$, {\bf FL}$_{ec}$, {\bf FL}$_{ew}$ due to their connection with type theory and the Curry-Howard isomorphism, where {\bf BCI}, {\bf BCK}, {\bf BCW} designate restricted families of $\lambda$-terms and the combinators B,C,I,K,W correspond to the following Hilbert-style axiomatization of their pure implicational fragment.
\begin{tabbing}
({\bf B}) \hskip1cm\= $(\varphi\ra \psi)\ra((\vartheta\ra \varphi)\ra(\vartheta\ra \psi))$   \\
({\bf C}) \>   $(\varphi\ra(\psi\ra \vartheta))\ra(\psi\ra(\varphi\ra \vartheta))$ \\
({\bf I}) \> $\varphi\ra\varphi$\\
({\bf K}) \>    $\varphi\ra(\psi\ra \varphi)$ \\
({\bf W}) \>  $(\varphi\ra(\varphi\ra \psi))\ra(\varphi\ra \psi)$
\end{tabbing}

Proof systems for substructural logics can be found in the literature and we do not take the space to restate them here. The reader may wish to consult \cite{suzuki8}, for a  Gentzen System, or  \cite{dosen-modal} for a Hilbert-style axiomatization. In fact, for the purposes of this article we will be content with a proof system as a symmetric consequence system, with sequents of the form $\varphi\proves\psi$, directly encoding the corresponding algebraic specification.

Fragments of these systems are also of independent interest, corresponding for example to the logics of implicative $(L,\leq,0,1,\wedge,\vee,\ra)$, or bi-implicative lattices $(L,\leq,0,1,\wedge,\vee,\la,\ra)$.
Going in the other direction, extensions of these systems for example by various negation and/or modal operators are also of interest, but we shall not treat such extensions in the present article.

Contrasting to relational semantics, algebraic semantics and soundness and completeness for the systems are well established results in the substructural logics community, see for example \cite{ono4,galatos-book}.

Lacking distribution, substructural logics are interpreted in sorted frames $(X,\upv, Y, R^{111})$, where $R^{111}$ is a ternary relation with the indicated sorting type and $\upv\;\subseteq X\times Y$ is the {\em Galois relation} of the frame. Fusion and implication have been modeled in sorted frames by Gehrke \cite{mai-gen} and then also by Dunn, Gehrke and Palmigiano \cite{dunn-gehrke}, using RS-frames (separated (clarified) and reduced frames, in Wille's sense \cite{wille2}). For our argument of a full and faithful modal translation to go through we need to ensure that the class of frames we use is modally definable, since both the substructural systems and their companion modal logics are to be interpreted in the same class of frames, with respect to which completeness theorems should be derivable. For this reason we drop both restrictions of being separated and being reduced for frames.

Given a frame $\mathfrak{F}=(X,\upv,Y,R^{111})$ a model $\mathfrak{N}=(\mathfrak{F},V)$ for the interpretation of the language of a substructural logic is equipped with a map $V:\{p_i\midsp i\in\mathbb{N}\}\lra\gpsi$, assigning a stable set (a concept extent) to propositional variables. An interpretation $\overline{V}$ of sentences $\varphi$ as formal concepts $\overline{V}(\varphi)=(\val{\varphi},\yvval{\varphi})$ with extent $\val{\varphi}$ and intent $\yvval{\varphi}=\val{\varphi}\rperp$ is then recursively defined. The satisfaction $\forces$ and co-satisfaction $\dforces$ relations are defined by setting, for $x\in X$ and $y\in Y$, $x\forces\varphi$ iff $x\in\val{\varphi}$ and $y\dforces\varphi$ iff $y\in\yvval{\varphi}$. The recursive definition of the interpretation $\overline{V}$ is specified by the appropriate clauses for $\forces$ and $\dforces$, shown in Table \ref{sat}.  The relation $R^{\partial 11}$ involved in the semantic clauses is defined by setting $R^{\partial 11}zz'=(R^{111}zz')\rperp$.

\begin{table}[!htbp]
\caption{(Co)Satisfaction relations for Substructural Logics}
\label{sat}
\begin{tabbing}
$x\forces p_i$\hskip8mm\=iff\hskip3mm\= $x\in V(p_i)$\\
$x\forces\top$ \>iff\> $x=x$\\
$y\dforces\bot$\>iff\> $y=y$\\
$x\forces\varphi\wedge\psi$\>iff\> $x\forces\varphi$ and $x\forces\psi$\\
$y\dforces\varphi\vee\psi$\>iff\> $y\dforces\varphi$ and $y\dforces\psi$\\
$x\forces\varphi\ra\psi$\>iff\> $\forall z\in X\;\forall y\in Y\;(z\forces\varphi\;\wedge\;y\dforces\psi\;\lra\;yR^{\partial 11}zx)$\\
$y\dforces\varphi\circ\psi$\>iff\> $\forall x,z\in X\;(z\forces\varphi\;\wedge\;x\forces\psi\;\lra\;yR^{\partial 11}zx)$\\
$x\forces\psi\la\varphi$ \>iff\> $\forall y\in Y\;\forall z\in X\;(y\dforces\psi\;\wedge\;z\forces\varphi\;\lra\; yR^{\partial 11}xz )$
\end{tabbing}
\end{table}
For a sentence $\varphi$ it suffices to specify either a satisfaction, or a co-satisfaction clause, since in the interpretation $\overline{V}(\varphi)=(\val{\varphi},\yvval{\varphi})$ we have $\val{\varphi}=\lperp\yvval{\varphi}$ and  $\yvval{\varphi}=\val{\varphi}\rperp$. Hence both $x\forces\varphi$ iff \mbox{$\forall y\in Y\;(y\dforces\varphi\;\lra\;x\upv y)$,} as well as  $y\dforces\varphi$ iff $\forall x\in X\;(x\forces\varphi\;\lra\;x\upv y)$ always obtain.

Table \ref{constraints} displays relational constraints on frames, to be considered in the sequel.

\begin{table}[!htbp]
\caption{Relational Constraints on Frames}
\label{constraints}
\begin{enumerate}
\item[(C1)]
For all points $w, z, u, v\in X$, statements (1) and (2) below are equivalent
    \begin{itemize}
      \item[(1)] $\exists x\;(xR^{111}uv\;\wedge\;zR^{111}xw)$
      \item[(2)] $\exists x\;(xR^{111}vw\;\wedge\;zR^{111}ux)$
    \end{itemize}
\item[(C2)] $\forall x,z,z'\in X\;(xR^{111}zz'\;\llra\; xR^{111}z'z)$
\item[(C3)] $\forall x,z,z'\in X\;(xR^{111}zz'\;\lra\;z'\preceq x)$
\item[(C4)] $\forall x\in X\;xR^{111}xx$
\end{enumerate}
\end{table}

The reader familiar with the literature on relevance logic will recognize them as the standard frame conditions relevance logicians consider on frames \cite{relevance1,relevance2,relevance3} and some of our related proof obligations come across the same type of difficulties as encountered in proving completeness theorems for relevance logic in relational semantics. An exception is the relational constraint (C3), corresponding to Weakening being assumed. In a Boolean setting (for the logics of residuated Boolean algebras) (C3) can be replaced by the condition $\forall x,z,z'\in X\;(xR^{111}zz'\;\lra\;z'= x)$, but this is too strong a requirement for the semantics of substructural logics.

\begin{defn}\rm
\label{frame classes}
Taking the frame constraints of Table \ref{constraints} in consideration, we distinguish the frame classes $\mathbb{NFL, FL, BCI, BCW, BCK}$ by the obvious criteria, detailed below:
\begin{itemize}
\item[] $\mathbb{NFL}$) Unrestricted class of frames $\mathfrak{F}=(X,\upv,Y,R^{111})$
\item[] $\mathbb{FL}$)  Frames where the constraint (C1) holds
\item[] $\mathbb{BCI}$)  Frames where the constraints (C1) and (C2) hold
\item[] $\mathbb{BCK}$)  Frames where the constraints (C1), (C2) and (C3) hold
\item[] $\mathbb{BCW}$)  Frames where the constraints (C1), (C2) and (C4) hold
\end{itemize}
\end{defn}

\subsection{Canonical Extensions}
\label{canext section}
Canonical extensions for bounded lattices were introduced by Gehrke and Harding \cite{mai-harding}, extending to the non-distributive case the notion of a canonical extension of J\'{o}nsson and Tarski \cite{jt1,jt2}. A canonical lattice extension is defined as a pair $(\alpha,C)$ where $C$ is a complete lattice and $\alpha$ is an embedding of a lattice $\mathcal{L}$ into $C$ such that the following density and compactness requirements are satisfied
\begin{quote}
$\bullet$ (density) $\alpha[{\mathcal L}]$ is {\em dense} in $C$, where the latter means that every element of $C$ can be expressed both as a meet of joins and as a join of meets of elements in $\alpha[{\mathcal L}]$\\[0.5mm]
$\bullet$ (compactness) for any set $A$ of closed elements and any set  $B$ of open elements of $C$, $\bigwedge A\leq\bigvee B$ iff there exist finite subcollections $A'\subseteq A, B'\subseteq B$ such that $\bigwedge A'\leq\bigvee B'$
\end{quote}
where the {\em closed elements} of $C$ are defined in \cite{mai-harding} as the elements in the meet-closure of the representation map $\alpha$ and the {\em open elements} of $C$ are defined dually as the join-closure of the image of $\alpha$. Existence of canonical extensions for bounded lattices is proven in \cite{mai-harding} by demonstrating that the compactness and density requirements are satisfied in the representation due to Hartonas and Dunn \cite{sdl} (though it is to be noted that the lattice representation argument of \cite{sdl} is merely reproduced in \cite{mai-harding}, without attributing it to \cite{sdl}).

The canonical extension construction is functorial. Indeed, recall from \cite{mai-harding} that if $(\alpha,C)$ is a canonical extension of a bounded lattice $L$, and  $\type{K,O}$ are its sets of closed and open elements, the $\sigma$ and $\pi$-extensions $f_\sigma,f_\pi:{\mathcal L}_\sigma\lra{\mathcal L}_\sigma$ (where, following the notation of \cite{mai-harding}, ${\mathcal L}_\sigma$ designates the canonical extension of $\mathcal L$) of a unary monotone map $f:L\lra L$ are defined in \cite{mai-harding}, taking also into consideration Lemma 4.3 of \cite{mai-harding}, by setting, for $k\in\type{K}$, $o\in\type{O}$ and $u\in C$
\begin{eqnarray}
f_\sigma(k)=\bigwedge\{f(a)\midsp k\leq a\in L\} &\hskip1cm& f_\sigma(u)=\bigvee\{f_\sigma(k)\midsp\type{K}\ni k\leq u\}\label{sigma-extn}\\
f_\pi(o)=\bigvee \{f(a)\midsp L\ni a\leq o\} && f_\pi(u)=\bigwedge\{f_\pi(o)\midsp u\leq o\in\type{O}\}\label{pi-extn}
\end{eqnarray}
where in these definitions $\mathcal L$ is identified with its isomorphic image in $C$ and $a\in {\mathcal L}$ is then identified with its representation image.
Working concretely with the canonical extension of \cite{sdl}, the $\sigma$ extension $f_\sigma:{\mathcal L}_\sigma\lra{\mathcal L}_\sigma$ of a monotone map $f$ as in equation (\ref{sigma-extn}) and the dual $\sigma$-extension $f_\sigma^\partial:{\mathcal L}^\partial_\sigma\lra{\mathcal L}^\partial_\sigma$ (not used in \cite{mai-harding}) are defined by instantiating equation (\ref{sigma-extn}) in the concrete canonical extension of \cite{sdl} considered here by setting, for $x\in X$ and $y\in Y$ and where $x_e$ is a principal filter, $y_e$ a principal ideal and the closed elements are precisely the principal upper sets $\Gamma u$ (for each of $X$, $Y$).
\begin{tabbing}
$f_\sigma(\Gamma x)$\hskip2mm\==\hskip2mm\=$\bigwedge\{\alpha_X(fa)\midsp a\in{\mathcal L}, \Gamma x\leq\alpha_X(a)\}$\hskip2mm\==\hskip2mm\=$\bigwedge\{\Gamma x_{fa}\midsp\Gamma x\subseteq\Gamma x_a\}$\\
\>=\>$\bigwedge\{\Gamma x_{fa}\midsp a\in x\}$\>=\>$\Gamma(\bigvee\{x_{fa}\midsp a\in x\})$\\[1mm]
$f_\sigma^\partial(\Gamma y)$\>=\>$\bigwedge\{\alpha_Y(fa)\midsp a\in{\mathcal L}, \Gamma y\leq\alpha_Y(a)\}$\>=\>$\bigwedge\{\Gamma y_{fa}\midsp\Gamma y\subseteq\Gamma y_a\}$\\
\>=\>$\bigwedge\{\Gamma y_{fa}\midsp a\in y\}$\>=\>$\Gamma(\bigvee\{y_{fa}\midsp a\in y\})$
\end{tabbing}
From \cite{dloa} recall that if $f$ is an $n$-ary normal operator of distribution type $\delta=(i_1,\ldots,i_n;i_{n+1})$, where $i_j\in\{1,\partial\}$ (in other words $f:\mathcal{L}^{i_1}\times\cdots\times\mathcal{L}^{i_n}\lra\mathcal{L}^{i_{n+1}}$ is an additive operator), then  we defined filter/ideal operators $f^\sharp, f^\flat$ which, in the unary case are precisely the maps $f^\sharp(x)=\bigvee\{x_{fa}\midsp a\in x\}$ and $f^\flat(y)=\bigvee\{y_{fa}\midsp a\in y\}$ (for a filter $x$ and ideal $y$).
It can be then immediately observed that $f_\sigma(\Gamma x)=\Gamma(f^\sharp x)$, if $f$ distributes over joins, and that $f^\partial_\sigma(\Gamma y)=\Gamma(f^\flat y)$ if it distributes over meets (the cases of co-distribution being reducible to these two by considering $f$ as a map from $\mathcal{L}$ to its opposite lattice $\mathcal{L}^\partial$, or the other way around). The $\pi$-extension $f_\pi$ is obtained from the dual $\sigma$ extension $f^\partial_\sigma$, by composition with the Galois maps, $f_\pi(\lperp\{y\})=\lperp(f^\partial_\sigma(\lperp\{y\})\rperp)=\lperp(f^\partial_\sigma(\Gamma y))$, where the set $\{\lperp\{y\}\midsp y\in Y\}$ is the set of open elements of $\mathcal{L}_\sigma$ and $\lperp(\;), (\;)\rperp$ designates the Galois connection generated by $\upv$.

Besides serving as a means to construct the $\sigma$-extension of maps, as shown in \cite{sdl-exp,pnsds}, point operators are used in \cite{dloa} and in each of \cite{sdl-exp,pnsds} to construct relational structures dual to lattice expansions. Roughly, for each normal lattice operator $f:\mathcal{L}^{i_1}\times\cdots\times\mathcal{L}^{i_n}\lra\mathcal{L}^{i_{n+1}}$, where $i_j\in\{1,\partial\}$, a canonical relation $R_f$ is defined by setting $uR_fu_1\cdots u_n$ iff $f^\sharp(u_1,\ldots,u_n)\leq u$ and a dual relation $R^\partial_f$ is then appropriately derived. The relations $R_f,R^\partial_f$ are then used to define operators $F$ on stable sets and $F^\partial$ on co-stable sets that are shown in \cite{sdl-exp,pnsds} to be the canonical (dual) extension $f_\sigma, f^\partial_\sigma$ of $f$.

We conclude this section with considering the case where the represented lattice (expansion) is distributive, or a Boolean algebra.   For the approach based on and extending the Hartonas-Dunn lattice duality \cite{sdl}, the canonical frames for distributive and Boolean algebras are the obvious subframes of the canonical lattice frame. Indeed, let $\mathcal{L}$ be a lattice and assume it is a Boolean algebra. Let ${\mathcal G}(\filt(\mathcal{L}))$ be the set of Galois-stable subsets of the Hartonas-Dunn dual canonical frame $(\filt(\mathcal{L}),\upv,\idl(\mathcal{L}))$ (the canonical extension of $\mathcal{L}$) and let $\ufilt(\mathcal{L})\subseteq\filt(\mathcal{L})$ be its set of ultrafilters. It is not hard to see that the map $\eta:{\mathcal G}(\filt(\mathcal{L}))\lra \powerset(\ufilt(\mathcal{L}))$ defined by $\eta(A)=A\cap\ufilt(\mathcal{L})$ is an isomorphism of canonical extensions. Indeed, by uniqueness of canonical extensions \cite{mai-harding},  there is an isomorphism $\eta:{\mathcal G}(\filt(\mathcal{L}))\iso \powerset(\ufilt(\mathcal{L}))$, commuting with the embeddings $\alpha_A(a)=\{x\in\filt(\mathcal{L})\midsp a\in x\}$, \mbox{$h_A(a)=\{u\in\ufilt(\mathcal{L})\midsp a\in u\}$,} i.e. $\eta(\alpha_A(a))=h_A(a)$, for all $a\in\mathcal{L}$. Given that by join density of principal filters any filter $x$ is $x=\bigvee_{a\in x}x_a$, where we let $x_a=a\!\uparrow$ designate a principal filter and that, letting $\Gamma x=x\!\Uparrow$,  $\Gamma x=\Gamma(\bigvee_{a\in x}x_a)=\bigwedge_{a\in x}\Gamma x_a=\bigwedge_{a\in x}\alpha_A(a)$ and since $\eta$ preserves both meets and joins, being an isomorphism, we obtain
\begin{tabbing}
$\eta(\Gamma x)$\hskip3mm\==\hskip2mm\= $\eta(\bigwedge_{a\in x}\alpha_A(a))=\bigcap_{a\in x}\eta(\alpha_A(a))=\bigcap_{a\in x}h_A(a)$\\
\> =\> $\bigcap_{a\in x}\{u\in\ufilt(\mathcal{L})\midsp a\in u\}= \{u\in\ufilt(\mathcal{L})\midsp x\leq u\}$\\
\>=\> $\{u\in\ufilt(\mathcal{L})\midsp u\in\Gamma x\}=   \Gamma x\cap \ufilt(\mathcal{L})$
\end{tabbing}
where $\leq$ in `$x\leq u$' designates filter inclusion.
Given also join-density of the set of closed elements of ${\mathcal G}(\filt(\mathcal{L}))$ (see Lemma 3.1, \cite{discr}) we obtain that for any stable set $A\in{\mathcal G}(\filt(\mathcal{L}))$,
\begin{eqnarray*}
\eta(A)&=&\eta(\bigvee_{x\in A}\Gamma x)=\bigcup_{x\in A}\eta(\Gamma x)=\bigcup_{x\in A}\bigcap_{a\in x}\{u\in\ufilt(\mathcal{L})\midsp a\in u\}\\
&=&\bigcup_{x\in A}\{u\in\ufilt(\mathcal{L})\midsp x\leq u\}
\end{eqnarray*}
Stable sets are increasing, i.e. $x\in A$ implies $\Gamma x\subseteq A$ (see Lemma 3.1, \cite{discr}), hence $\eta(A)=A\cap\ufilt(\mathcal{L})$.

The following lemma will be useful in the sequel.
\begin{lemma}\rm
\label{eta morphism}
Let $\mathcal{A}$ be a Boolean algebra with a unary monotone operator $f:\mathcal{A}\lra\mathcal{A}$. Let also $\mathcal{A}_\sigma$ be the Hartonas-Dunn canonical extension $\mathcal{A}_\sigma=\mathcal{G}(\filt(\mathcal{A}))$ of $\mathcal{A}$ considered merely as a lattice and let $\mathcal{A}_\delta=\powerset(\ufilt(\mathcal{A}))$ be the standard construction of the canonical (ultrafilter) extension of the Boolean algebra $\mathcal{A}$, after Stone \cite{stone1}. Designate the canonical extension ($\sigma$-extension) of the map $f$ respectively by $f_\sigma$ and $f_\delta$. Then, the isomorphism $\eta:\mathcal{G}(\filt(\mathcal{A}))\iso \powerset(\ufilt(\mathcal{A}))$ is a homomorphism of lattice expansions, i.e. it commutes with the $\sigma$-extensions of $f$, $\eta f_\sigma=f_\delta\eta$.
\end{lemma}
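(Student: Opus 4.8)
The plan is to leverage what has already been established before the statement: $\eta$ is an isomorphism of canonical extensions commuting with the representation embeddings, $\eta\alpha_A(a)=h_A(a)$ for every $a\in\mathcal{A}$, and (by the computation preceding the lemma) $\eta(\Gamma x)=\{u\in\ufilt(\mathcal{A})\midsp x\leq u\}=\bigcap_{a\in x}h_A(a)$ for $x\in\filt(\mathcal{A})$. Two further facts make the lemma almost immediate. First, \emph{closedness is intrinsic}: a closed element of a canonical extension is by definition a meet of elements of the embedded copy of $\mathcal{A}$, so the complete-lattice isomorphism $\eta$ restricts to an order-bijection from the closed elements of $\mathcal{A}_\sigma$, namely the $\Gamma x$ with $x\in\filt(\mathcal{A})$, onto the closed elements of $\mathcal{A}_\delta$, namely the sets $\{u\in\ufilt(\mathcal{A})\midsp x\leq u\}$. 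Second, both $f_\sigma$ and $f_\delta$ are computed from $f$ by the single formula \eqref{sigma-extn}, the J\'{o}nsson--Tarski $\sigma$-extension being a special case of the lattice one; in particular no normality or additivity of $f$ is needed.

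First I would check $\eta f_\sigma=f_\delta\eta$ on closed elements. Fix $x\in\filt(\mathcal{A})$. By \eqref{sigma-extn}, $f_\sigma(\Gamma x)=\bigwedge\{\alpha_A(fa)\midsp \Gamma x\leq\alpha_A(a)\}$. Applying $\eta$, which preserves arbitrary meets and is an order isomorphism, and substituting $\eta\alpha_A(a)=h_A(a)$ together with the equivalence $\Gamma x\leq\alpha_A(a)\Llra\eta(\Gamma x)\leq h_A(a)$, gives $\eta(f_\sigma(\Gamma x))=\bigwedge\{h_A(fa)\midsp\eta(\Gamma x)\leq h_A(a)\}$; since $\eta(\Gamma x)$ is closed in $\mathcal{A}_\delta$, the right-hand side is precisely $f_\delta(\eta(\Gamma x))$, reading \eqref{sigma-extn} inside $\mathcal{A}_\delta$. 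Then I would propagate the identity to an arbitrary $u\in\mathcal{A}_\sigma$ by join-density of the closed elements (Lemma 3.1 of \cite{discr}): writing $u=\bigvee\{k\midsp k\leq u,\ k\ \text{closed}\}$ we have $f_\sigma(u)=\bigvee\{f_\sigma(k)\midsp k\leq u,\ k\ \text{closed}\}$ by \eqref{sigma-extn}, so, applying $\eta$ and using preservation of arbitrary joins and the previous step,
\[
\eta(f_\sigma(u))=\bigvee\{f_\delta(\eta(k))\midsp k\leq u,\ k\ \text{closed}\};
\]
as $\eta$ is an order-bijection between the closed elements on the two sides, $\{\eta(k)\midsp k\leq u,\ k\ \text{closed}\}$ is exactly the set of closed elements below $\eta(u)$, so this join equals $f_\delta(\eta(u))$ by \eqref{sigma-extn} once more.

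The argument is essentially bookkeeping, and the step I would take most care to state cleanly --- and the only place anything could go wrong --- is the identification of the closed elements on the two sides together with the observation that \eqref{sigma-extn} is the common definition underlying $f_\sigma$ and $f_\delta$; granting these, the fact that an isomorphism of canonical extensions is automatically a complete-lattice isomorphism (hence preserves the meets and joins occurring in \eqref{sigma-extn}) does the rest.
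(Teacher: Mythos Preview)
Your proposal is correct and follows essentially the same line as the paper's own proof: both identify the closed elements on the two sides, observe that $\eta$ carries $\Gamma x$ to $\Gamma^u x=\{u\in\ufilt(\mathcal{A})\midsp x\leq u\}$, and then verify $\eta f_\sigma(\Gamma x)=f_\delta(\eta(\Gamma x))$ by unwinding \eqref{sigma-extn} on each side and using that $\Gamma x\subseteq\alpha_A(a)$ iff $\Gamma^u x\subseteq h_A(a)$ iff $a\in x$. The only differences are presentational: the paper computes the two sides concretely as sets of ultrafilters and compares the defining conditions, whereas you argue abstractly that $\eta$ transports the meet in \eqref{sigma-extn} from one extension to the other; and you explicitly carry out the extension from closed elements to arbitrary elements via join-density, a step the paper's proof leaves implicit (it stops after the closed-element case).
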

\begin{proof}
For the purposes of this proof, let $X=\filt(\mathcal{A})$, $X^u=\ufilt(\mathcal{A})$,  $\Gamma x=\{z\in X\midsp x\leq z\}$, for a filter $x$ and where $\leq$ designates filter inclusion, and $\Gamma^ux=\{u\in X^u\midsp x\leq u\}$ and recall that it was argued above that $\eta(\Gamma x)=\Gamma^ux$. Letting $\alpha:\mathcal{A}\hookrightarrow\mathcal{A}_\sigma$ and $h:\mathcal{A}\hookrightarrow\mathcal{A}_\delta$ be the canonical embeddings of $\mathcal{A}$ it is not hard to see that the intersection closure of $\alpha[\mathcal{A}]$ is the collection of $\Gamma x$, with $x\in X$ and, similarly, the intersection closure of $h[\mathcal{A}]$ is the collection of the sets $\Gamma^ux$, with $x\in X$. In other words, the sets of the form $\Gamma x,\Gamma^ux$ are the closed (filter) elements of $\mathcal{A}_\sigma$ and $\mathcal{A}_\delta$, respectively. By  $\eta(\Gamma x)=\Gamma^ux$, the isomorphism of canonical extensions preserves closed elements (and similarly for open elements). Given the definition of the $\sigma$-extension of a monotone map in $\mathcal{A}_\sigma$ we obtain
\begin{equation}
\eta f_\sigma(\Gamma x)=\{u\in X^u\midsp \forall a\in\mathcal{A}\;(a\in x\;\lra\;f(a)\in u)\}
\label{eta-sigma}
\end{equation}
Similarly, considering the $\sigma$-extension of $f$ in $\mathcal{A}_\delta=\powerset(X^u)$ we obtain by definition
\begin{equation}
f_\delta(\eta(\Gamma x))=f_\delta(\Gamma^ux)=\{u\in X^u\midsp\forall a\in\mathcal{A}\;(\Gamma^ux\subseteq h(a)\;\lra\;f(a)\in u)\}
\label{sigma-eta}
\end{equation}
where $\Gamma^ux\subseteq h(a)$ iff $\eta(\Gamma x)\subseteq\eta\alpha(a)$ iff $\Gamma x\subseteq\alpha(a)$ iff $a\in x$. Hence we obtain that $\eta f_\sigma(\Gamma x)=f_\delta(\eta(\Gamma x))$, as claimed.
\end{proof}
Using the fact pointed out in \cite{mai-harding} that $(\mathcal{L}\times\mathcal{M})_\sigma\iso\mathcal{L}_\sigma\times\mathcal{M}_\sigma$ and $(\mathcal{L}^\partial)_\sigma\iso(\mathcal{L}_\sigma)^\partial$, Lemma \ref{eta morphism} extends readily to $n$-ary maps that are either monotone, or antitone in their argument places.

\section{Sorted Frames with Relations and their Logics}
\label{rel-section}
The extensions of {\bf ML}$_2$ that we consider arise as the logics of sorted frames with additional relations. Cases of interest include temporal formal contexts $((X,\prec),\upv,Y)$, where $\prec\;\subseteq X\times X$ is a relation of temporal succession of (instances of) formal objects, of use in Temporal FCA \cite{wolff}, or contexts with an indiscernibility relation $\approx$ on formal objects, $((X,\approx),\upv,Y)$, arising in Rough FCA \cite{kent}. Frames of this type correspond to extensions of substructural logics with unary modal operators, which we do not discuss in this article.

We focus here on structures $(X,\upv,Y,R^{111})$, where $R$ has the sorting type indicated by the superscript (with $1$ pointing to $X$ and $\partial$ to $Y$), i.e. $R\subseteq X\times (X\times X)$. With a spatial intuition, as in \cite{yde-points-lines}, $R$ can be understood, for example, as a colinearity relation for points in $X$, $xRx'x''$ iff $\exists y\in Y\;\{x,x',x''\}\upv y$, where $\upv$ is the incidence relation of the frame. We make no further assumptions about the accessibility relation $R^{111}$, such as the `compatibility' assumption made in \cite{mai-gen,mai-grishin}, because we wish to ensure modal definability of our frame classes. It is for the same reason that we do not assume that frames are separated and/or reduced.

\subsection{Sorted, Residuated Modal Logics}
\label{operators on sets section}
Given a frame $(X,\upv,Y,R^{111})$, the relation $R^{111}$ generates a classical binary additive operator on $\powerset(X)$, defined in the standard way by
\begin{eqnarray}
U\mbox{$\bigodot$}U'&=&\{x\in X\midsp \exists u,u'\in X\;(xR^{111}uu'\;\wedge\;u\in U\;\wedge\;u'\in U')\}
\label{bigodot}\\
&=& \bigcup_{u\in U, u'\in U'}R^{111}uu'\nonumber
\end{eqnarray}
It follows from the definition that $\bigodot$ distributes over arbitrary unions in each argument place and we let $\tilde{\La},\tilde{\Ra}$ be its residuals in the algebra of subsets of $X$, i.e. $U\subseteq W\tilde{\La} U'$ iff $U\bigodot U'\subseteq W$ iff $U'\subseteq U\tilde{\Ra} W$. Hence $(\powerset(X),\tilde{\La},\bigodot,\tilde{\Ra})$ is a residuated Boolean algebra.
\begin{lemma}\rm
\label{modal residuals lemma}
The residuals $\tilde{\La},\tilde{\Ra}$ are defined by
\begin{itemize}
\item $U\tilde{\Ra} W=\{x\in X\midsp\forall z,z'\in X\;(z\in U\;\wedge\;z'R^{111}zx\;\lra\;z'\in W)\}$
\item $W\tilde{\La} U=\{x\in X\midsp \forall z,z'\in X\;(z\in U\;\wedge\;z'R^{111}xz\;\lra\;z'\in W)\}$
\end{itemize}
\end{lemma}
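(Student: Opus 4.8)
The plan is to compute each residual directly from its defining adjunction, unwinding the definition of $\bigodot$. Recall $U\bigodot U' = \{x \midsp \exists u,u'\in X\;(xR^{111}uu'\wedge u\in U\wedge u'\in U')\}$, and that the residuals are characterized by $U\bigodot U'\subseteq W$ iff $U'\subseteq U\tilde{\Ra} W$ iff $U\subseteq W\tilde{\La} U'$. Since $\bigodot$ distributes over arbitrary unions in each coordinate, these residuals exist, and for a residuated operator the right residual is given pointwise by $U\tilde{\Ra} W = \bigcup\{U'\midsp U\bigodot U'\subseteq W\}$, equivalently $x\in U\tilde{\Ra} W$ iff $U\bigodot\{x\}\subseteq W$. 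So the first step is to observe $U\bigodot\{x\} = \{z'\in X\midsp \exists z\in X\;(z'R^{111}zx\wedge z\in U)\}$, whence $x\in U\tilde{\Ra} W$ iff every such $z'$ lies in $W$, i.e. iff $\forall z,z'\in X\;(z\in U\wedge z'R^{111}zx\lra z'\in W)$. This is exactly the claimed formula. The left residual is handled symmetrically: $x\in W\tilde{\La} U'$ iff $\{x\}\bigodot U'\subseteq W$, and $\{x\}\bigodot U' = \{z'\in X\midsp\exists z\in X\;(z'R^{111}xz\wedge z\in U')\}$, giving $x\in W\tilde{\La} U'$ iff $\forall z,z'\in X\;(z\in U'\wedge z'R^{111}xz\lra z'\in W)$, matching the statement (with the bound variable $U'$ renamed to $U$ in the lemma).

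Alternatively, and perhaps cleaner to write out, I would verify the adjunction $U\bigodot U'\subseteq W \Llra U'\subseteq U\tilde{\Ra} W$ directly with the candidate formula substituted in, using the logical equivalence $\forall x\,(\exists u,u'(\,xR^{111}uu'\wedge u\in U\wedge u'\in U')\lra x\in W)$ $\Llra$ $\forall u,u',x\,(u\in U\wedge u'\in U'\wedge xR^{111}uu'\lra x\in W)$ $\Llra$ $\forall u'\,(u'\in U'\lra \forall u,x(u\in U\wedge xR^{111}uu'\lra x\in W))$, the last line being precisely $U'\subseteq U\tilde{\Ra} W$ after renaming $u,u',x$ to $z,\text{(the element)},z'$ as in the statement. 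The uniqueness of residuals then forces $U\tilde{\Ra} W$ and $W\tilde{\La} U$ to be the stated sets.

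I do not anticipate a genuine obstacle here; the content is entirely the bookkeeping of quantifier prenexing and the order of arguments in $R^{111}$. The only point demanding a little care is keeping track of which argument place of $R^{111}$ is being filled: in $U\tilde{\Ra} W$ the variable $x$ sits in the \emph{second} slot after the relation symbol ($z'R^{111}zx$), reflecting that $\bigodot$ acts on the right argument, whereas in $W\tilde{\La} U$ the $x$ sits in the \emph{first} slot ($z'R^{111}xz$); transposing these two by mistake is the one easy slip, so I would state the intermediate "$U\bigodot\{x\}$" and "$\{x\}\bigodot U'$" computations explicitly to guard against it.
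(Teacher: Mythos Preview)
Your proposal is correct and essentially coincides with the paper's argument: the paper carries out precisely your ``alternative'' route, unfolding $U\bigodot U'\subseteq W$ by quantifier prenexing into $U'\subseteq\{x\midsp\forall z,z'(z\in U\wedge z'R^{111}zx\lra z'\in W)\}$ and then invoking the defining adjunction. Your first route via the pointwise characterization $x\in U\tilde{\Ra}W$ iff $U\bigodot\{x\}\subseteq W$ is a harmless repackaging of the same computation.
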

\begin{proof}
The proof is by the following calculation
\begin{tabbing}
$U\bigodot U'\subseteq W$\hskip2mm\= iff\hskip2mm\= $\forall z'\in X\;(\exists z,x\in X\;(z'R^{111}zx\;\wedge\;z\in U\;\wedge\;x\in U')\lra z'\in W)$\\
\>iff\> $\forall x,z,z'\in X\;(x\in U'\;\lra\;(z\in U\;\wedge\;z'R^{111}zx\;\lra\;z'\in W))$\\
\>iff\> $U'\subseteq \{x\in X\midsp \forall z,z'\in X\;(z\in U\;\wedge\;z'R^{111}zx\;\lra\;z'\in W)))\}$
\end{tabbing}
and since $U\bigodot U'\subseteq W$ iff $U'\subseteq U\tilde{\Ra}W$ the claim for $\tilde{\Ra}$ is true. Similarly,
$U\bigodot U'\subseteq W$ iff $U\subseteq\{z\in X\midsp\forall x,z'\in X\;(x\in U'\;\wedge\;z'R^{111}zx\;\lra\;z'\in W)\}$, then just change bound variables.
\end{proof}

Properties of the operator $\bigodot$ correspond to the relational constraints of Table \ref{constraints}.

\begin{lemma}\rm
\label{C and odot}
The following hold
\begin{enumerate}
  \item (C1) holds iff $\bigodot$ is {\em associative}
  \item (C2) holds iff $\bigodot$ is {\em commutative} (in which case $\tilde{\La}$ coincides with $\tilde{\Ra}$)
  \item If (C3) holds then $\bigodot$ is {\em thinning} on the family of $\preceq$-increasing sets, i.e. the inclusion $U\bigodot W\subseteq W$ holds for any $\preceq$-increasing $U,W\subseteq X$
  \item (C4) holds iff $\bigodot$ is {\em contractive}, i.e. the inclusion $U\cap W\subseteq U\bigodot W$ holds for any $U,W\subseteq X$
  \item If both (C2) and (C3)  hold, then $U\bigodot W\subseteq U\cap W$  for any $U,W\subseteq X$
  \item If (C2)-(C4) all hold, then (C1) also holds since $\bigodot$ is trivialized to the set intersection operation
\end{enumerate}
\end{lemma}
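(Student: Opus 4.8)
\emph{Proof idea.} The plan is to handle all six parts through one device: rewrite $U\bigodot W$ by its defining membership condition, and for the ``only if'' halves of the biconditionals specialise at singletons. This is legitimate since $\bigodot$ is defined on all subsets of $X$ and $\{z\}\bigodot\{z'\}=\{x\midsp xR^{111}zz'\}$, so every feature of $R^{111}$ is faithfully recorded by $\bigodot$ on one- and two-element arguments; together with the already-noted fact that $\bigodot$ distributes over arbitrary unions in each place, this lets me move freely between $\bigodot$ on arbitrary arguments and on singletons.

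For item 1 I would unfold both associations directly: $z\in(U\bigodot U')\bigodot U''$ iff there are $u\in U$, $u'\in U'$, $u''\in U''$ and some $x$ with $xR^{111}uu'$ and $zR^{111}xu''$, whereas $z\in U\bigodot(U'\bigodot U'')$ iff there are the same $u,u',u''$ and some $x$ with $xR^{111}u'u''$ and $zR^{111}ux$. For fixed $z,u,u',u''$ the two inner existential statements are exactly clauses (1) and (2) of (C1) read with $(u,v,w):=(u,u',u'')$, so (C1) makes the two sets equal for all $U,U',U''$; conversely, taking $U=\{u\}$, $U'=\{v\}$, $U''=\{w\}$ returns (C1) verbatim. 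Item 2 runs identically: $x\in U\bigodot U'$ iff $\exists u\in U\,\exists u'\in U'\;xR^{111}uu'$ while $x\in U'\bigodot U$ iff $\exists u\in U\,\exists u'\in U'\;xR^{111}u'u$, so $\bigodot$ is commutative iff $R^{111}$ is symmetric in its last two places, i.e.\ (C2). The parenthetical on residuals is then immediate from the adjunctions, $U\subseteq W\tilde{\La}U'$ iff $U\bigodot U'\subseteq W$ iff $U'\bigodot U\subseteq W$ iff $U'\subseteq U\tilde{\Ra}W$, whence $W\tilde{\La}U=U\tilde{\Ra}W$.

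Items 3 and 4 are short. If $x\in U\bigodot W$, witnessed by $xR^{111}uw$ with $u\in U$ and $w\in W$, then (C3) gives $w\preceq x$, and for $\preceq$-increasing $W$ this forces $x\in W$ — that is item 3 (only $W$ need be increasing). For item 4, (C4) gives $xR^{111}xx$, which witnesses $x\in U\bigodot W$ whenever $x\in U\cap W$ (the ``if''), while the ``only if'' follows from $x\in\{x\}\bigodot\{x\}$. Item 5 is then two uses of item 3: $U\bigodot W\subseteq W$ directly, and $U\bigodot W=W\bigodot U\subseteq U$ by (C2), hence $U\bigodot W\subseteq U\cap W$ for $\preceq$-increasing $U,W$. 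Finally item 6 combines items 4 and 5: under (C2)--(C4) one gets $U\bigodot W=U\cap W$ on $\preceq$-increasing sets, and since intersection is associative, the associativity of $\bigodot$ on the stable sets that carry the semantics — which is what (C1) supplies for the interpretation of fusion — holds automatically.

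The bookkeeping in items 1--2 is the bulkiest part, but I expect the genuinely delicate point to be the $\preceq$-increasing hypothesis in items 3, 5 and in item 6: on arbitrary subsets $\bigodot$ need not be thinning — for example with $a\prec b$ and $R^{111}=\{(a,a,a),(b,b,b),(b,a,a)\}$, which satisfies (C2)--(C4), one has $\{a\}\bigodot\{a\}=\{a,b\}\not\subseteq\{a\}$, and here (C1) itself fails (with $u=v=a$ and $w=z=b$, side (1) holds while side (2) has no witness). So I would state these parts with ``$\preceq$-increasing'' explicitly in force, which is harmless since every stable set is $\preceq$-increasing and only stable sets enter the models.
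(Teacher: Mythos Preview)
Your treatment of items 1--4 is correct and follows exactly the paper's own argument: unfold the defining membership condition for $\bigodot$, and recover the relational constraint by specialising to singletons. The paper's proof in fact stops there and does not argue items 5 and 6 at all.

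On items 5 and 6 you go further than the paper, and your analysis is sharper than the lemma's literal statement. As written, item 5 asserts $U\bigodot W\subseteq U\cap W$ for \emph{arbitrary} $U,W\subseteq X$, and item 6 asserts that (C1) holds as a frame condition; your counterexample (with $a\prec b$ and $R^{111}=\{(a,a,a),(b,b,b),(b,a,a)\}$, which one can realise over a genuine frame satisfying the D-conditions) correctly shows that both fail without the $\preceq$-increasing restriction --- indeed (C1) itself can fail under (C2)--(C4). Your repair, restricting to $\preceq$-increasing sets and observing that this suffices because only stable sets carry the semantics and the sole downstream use of (C1) is associativity of $\bigovert$, is the right move. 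Just be explicit that in item 6 you are \emph{correcting} rather than proving the lemma's literal claim: you obtain associativity of $\bigodot$ on $\preceq$-increasing sets (hence of $\bigovert$), not the frame condition (C1).
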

\begin{proof}
Assuming (C1), let $z\in(U\bigodot V)\bigodot W$,  $x\in U\bigodot V$ and $u\in U, v\in V, w\in W$ so that $xR^{111}uv$ and $zR^{111}xw$, so that $\exists x(xR^{111}uv\;\wedge\; zR^{111}xw)$ holds. By (C1), let $x'$ be such that $x'R^{111}vw$ and $zR^{111}ux'$, so that $x'\in V\bigodot W$ and $z\in U\bigodot(V\bigodot W)$, hence $(U\bigodot V)\bigodot W\subseteq U\bigodot(V\bigodot W)$. The other direction of the inclusion is obtained similarly, going from (2) to (1). Conversely, if $\bigodot$ is associative, then from the identity $(U\bigodot V)\bigodot W= U\bigodot(V\bigodot W)$ the equivalence of conditions (1) and (2) is obtained in a straightforward manner, as it is merely a restatement of the special case of association $(\{u\}\bigodot\{v\})\bigodot\{w\}=\{u\}\bigodot(\{v\}\bigodot\{w\})$, for any $u,v,w\in X$.

The case of (C2) and commutativity is immediate.

Assuming (C3), let $x\in U\bigodot V$, where $U,V$ are $\preceq$-increasing. Let $u\in U, v\in V$ so that $xR^{111}uv$. By (C3), $v\preceq x$ and since $v\in V$, we obtain $x\in V$ by $\preceq$-increasingness of $V$.

If $x\in U\cap V$ and (C4) holds, then by $xR^{111}xx$ it follows $x\in U\bigodot V$. Conversely, if $\bigodot$ is contractive, choose $U=\{x\}=V$ to get $\{x\}\subseteq\{x\}\bigodot\{x\}$ from which $xR^{111}xx$ follows.
\end{proof}

The sorted modal language of a frame $\mathfrak{F}=(X,\upv,Y,R^{111})$, where we let $I$ be the complement of $\upv$, is the language
\begin{eqnarray*}
\alpha &:=& P_i\;(i\in\mathbb{N})\midsp\neg \alpha\midsp \alpha\wedge \alpha\midsp \lbbox \beta\midsp \alpha\odot \alpha\midsp \alpha\lfspoon \alpha\midsp \alpha\rfspoon \alpha \\
\beta &:=& Q_i\;(i\in\mathbb{N})\midsp\neg \beta\midsp \beta\wedge \beta\midsp \largesquare \alpha
\end{eqnarray*}
The language contains a copy of the language of {\bf CPL} (in the second sort), a copy of the language of residuated Boolean algebras (in the first sort), synthesized by the sorted modalities on which a residuation axiom will be imposed.

The language is interpreted in models $\mathfrak{M}=(\mathfrak{F},\imath)$, where $\imath(P_i)\subseteq X$, $\imath(Q_i)\subseteq Y$ and the interpretation is extended to all sentences by adding to the clauses for  {\bf CPL} (for each sort) satisfaction clauses for the additional  constructs, as shown in Table \ref{sat modal}.

\begin{table}[!htbp]
\caption{Satisfaction clauses for {\bf sub.ML}$_2$}
\label{sat modal}
\begin{tabbing}
\hskip3mm\=$x\models\alpha\odot\alpha'$\hskip5mm\= iff\hskip2mm\= $\exists z,z'\in X\;(xR^{111}zz'\;\wedge\; z\models\alpha\;\wedge\;z'\models\alpha')$\\
\>$x\models \alpha\rfspoon\alpha'$\>iff\> $\forall z,z'\in X\;(z\models\alpha\;\wedge\;z'R^{111}zx\;\lra\;z'\models\alpha')$ \\
\>$x\models\alpha'\lfspoon\alpha$ \>iff\> $\forall z,z'\in X\;(z\models\alpha\;\wedge\;z'R^{111}xz\;\lra\;z'\models\alpha')$\\
\>$x\models\lbbox \beta$\> iff\>  $\forall y\in Y\;(xIy\;\lra\;y\models \beta)$\\
\>$y\models\largesquare \alpha$\>iff\>$\forall x\in X\;(xIy\;\lra\;x\models \alpha)$
\end{tabbing}
\end{table}

It follows directly from definitions that $\val{\alpha\odot\alpha'}=\{x\in X\midsp x\models\alpha\circ\alpha'\}=\val{\alpha}\bigodot\val{\alpha'}$ and similarly $\val{\alpha\rfspoon\alpha'}=\val{\alpha}\tilde{\Ra}\val{\alpha'}$, $\val{\alpha'\lfspoon\alpha}=\val{\alpha'}\tilde{\La}\val{\alpha}$.

The two-sorted logic $\Lambda_2(\mathbb{F})$ of a class $\mathbb{F}$ of frames $\mathfrak{F}=(X,\upv,Y,R^{111})$ is a pair of sets of sentences $\Lambda_2(\mathbb{F})=(\type{A},\type{B})$ such that for every $\alpha\in\type{A}$ and any model $\mathfrak{M}=(\mathfrak{F},\imath)$ over any frame $\mathfrak{F}=(X,\upv,Y,R^{111})$ in the frame class $\mathbb{F}$, $\val{\alpha}_\imath=X$ and similarly for $\beta\in\type{B}$, $\yvval{\beta}_\imath=Y$.
The hierarchy of the corresponding sorted, residuated modal logics is displayed in Figure \ref{modal-hierarchy}, where the modal companion logic of a substructural logic {\bf SUB} is designated by {\bf sub.ML}$_2$.

\begin{figure}[!htbp]
\caption{Hierarchy of Modal Companion Logics}
\label{modal-hierarchy}
{\small
\[
\xymatrix{
{\bf bck.ML}_2 \mbox{ (C1)(C2)(C3)-frames} & {\bf bcw.ML}_2 \mbox{ (C1)(C2)(C4)-frames}  \\
& {\bf bci.ML}_2\ar@{-}[ul]\ar@{-}[u]\mbox{ (C1)(C2)-frames}\\
& {\bf fl.ML}_2\ar@{-}[u] \mbox{ (C1)-frames}\\
&{\bf nfl.ML}_2\ar@{-}[u]
}
\]
}
\end{figure}
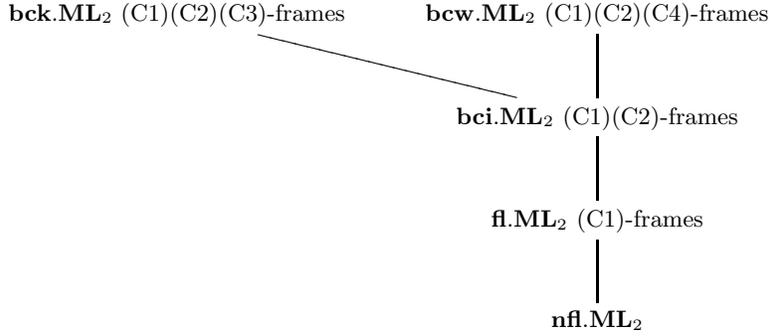

Note that for the systems including and above {\bf bci.ML}$_2$, the signature of the language may be restricted by dropping the (superfluous) $\lfspoon$.\\

\paragraph{Sorted Modal Logic: The Basic System ML$_2$}\mbox{}\\
The language of the minimal two-sorted, residuated modal logic ({\bf ML}$_2$) is generated by the following scheme
\begin{eqnarray*}
 \alpha &:=& P_i\;(i\in\mathbb{N})\midsp\neg \alpha\midsp \alpha\wedge \alpha\midsp \lbbox \beta \\
 \beta &:=& Q_i\;(i\in\mathbb{N})\midsp\neg \beta\midsp \beta\wedge \beta\midsp \largesquare \alpha
\end{eqnarray*}
Further logical connectives and constants are defined as usual in classical normal modal logic, i.e. $\alpha\vee \alpha'=\neg(\neg \alpha\wedge\neg \alpha')$, $\top=P_0\vee\neg P_0$, $\bot=P_0\wedge\neg P_0$, $\alpha\ra \alpha'=\neg \alpha\vee \alpha'$ and similarly for sentences of the second sort. In addition, diamond operators are defined by $\largediamond \alpha=\neg\largesquare\neg \alpha$ and $\lbdiamond \beta=\neg\lbbox\neg \beta$, where the two occurrences of negation are of different sort. Note that there are distinct constants $\top_1,\top_2$ (and similarly for $\bot_1,\bot_2$), corresponding to each sort. We avoid subscripts by letting $\sub{\top}{1}=\top,\sub{\bot}{1}=\bot$ and $\sub{\top}{2}={\tt t},\sub{\bot}{2}={\tt f}$.

Models $\mathfrak{M}=(\mathfrak{F},\imath)$ for  {\bf ML}$_2$  on two-sorted frames $\mathfrak{F}=(X,I,Y), I\subseteq X\times Y,$ (always assumed to satisfy condition \eqref{D-cond})
are defined as detailed for the general case of {\bf sub.ML}$_2$.

Let {\bf ML}$^\bullet_2$ designate the set of sentences of the first sort of the form $\lbbox\beta$, for some sentence $\beta$ of the second sort. Similarly, let {\bf ML}$^\circ_2$ designate the set of sentences of second sort of the form $\largesquare\alpha$, for some sentence $\alpha$ of the first sort. If $A$ is an {\bf ML}$^\bullet_2$ sentence and $B$ is an {\bf ML}$^\circ_2$ one, then clearly, $\val{A}=\val{\lbbox\largediamond A}=\lbbox\largediamond \val{A}$ and $\yvval{B}=\yvval{\largesquare\lbdiamond B}=\largesquare\lbdiamond\yvval{B}$.

In accordance with our decision for the substructural logics proof systems we adopt a symmetric consequence system, directly encoding the algebraic specification of the logic. Thus,
sequents for the logic {\bf ML}$_2$ are of the form $\alpha\proves\alpha'$, where $\alpha,\alpha'$ are  sentences of the first sort, as well as of the form $\beta\vproves\beta'$, where $\beta,\beta'$ are sentences of the second sort. The proof system consists of axioms and rules for sequents of the first type and of the second type, which are copies of the rules for classical propositional logic and can be found in any standard reference.  Furthermore, bridge rules are included, shown in Table \ref{bridge rules for ML2}, to relate the two subsystems.
\begin{table}[!htbp]
\caption{Residuation Axioms, D-Axioms and Bridge Rules}
\label{bridge rules for ML2}
\begin{tabbing}
$\alpha\proves \lbbox\largediamond \alpha$\hskip3mm\=
$\largediamond\lbbox \beta\vproves \beta$
\hskip3mm \=$\lbbox \beta\proves\lbdiamond \beta$\hskip3mm\=$\largesquare \alpha\vproves\largediamond \alpha$
\hskip3mm \= $\top\proves\lbbox\top$ \hskip3mm \= $\type{t}\proves\largesquare\type{t}$\\[1mm]
$\infrule{\alpha\proves \alpha'}{\largesquare\alpha\vproves\largesquare \alpha'}$ \>
$\infrule{\beta\vproves \beta'}{\lbbox\beta\proves\lbbox \beta'}$ \> $\largesquare\alpha\wedge\largesquare\alpha'\vproves\largesquare(\alpha\wedge\alpha')$ \>\> $\lbbox\beta\wedge\lbbox\beta'\proves\lbbox(\beta\wedge\beta')$
\end{tabbing}
\end{table}

For semantic entailment we write $\alpha\models\alpha'$ and $\beta\models\beta'$ (we do not change notation between the two sorts), with a standard, classical understanding. As usual, we write $\proves\alpha$ stand for $\top\proves\alpha$, as well as $\models\alpha$ for $\top\models\alpha$ and similarly for $\proves\beta$ and $\models\beta$. It is immediate that {\bf ML}$_2$ is sound in the class of two-sorted frames satisfying condition \eqref{D-cond}.

The Lindenbaum-Tarski algebra of the logic is naturally a two-sorted algebra $(\mathcal{A}, \mathcal{B},\Diamond,\bbox)$ where both ${\mathcal A,B}$ are Boolean algebras, $\Diamond:\mathcal{A}\rightleftarrows\mathcal{B}:\bbox$ are normal modal operators satisfying the D-axiom $\bbox b\leq\blackdiamond b$ and $\Box a\leq\Diamond a$ and, in addition, for any $a\in\mathcal{A}, b\in\mathcal{B}$, $\Diamond a\leq b$ iff $a\leq\bbox b$ and then also $\blackdiamond b\leq a$ iff $b\leq\Box a$ where boxes and diamonds are interdefinable using classical complementation, as usual.

Soundness is immediate and completeness of {\bf ML}$_2$ is proven along the lines of the completeness proof for  normal modal logics, except that we consider both the set $X=\ufilt(\mathcal{A})$ of ultrafilters of $\mathcal{A}$ (consisting of equivalence classes of sentences of the first sort), as well as the set $Y=\ufilt(\mathcal{B})$ of ultrafilters of $\mathcal{B}$ (consisting of equivalence classes of sentences of the second sort). The canonical relation $I\subseteq X\times Y$ is defined along the standard lines: $uIv$ iff for all $b\in\mathcal{B}$, if $\bbox b\in u$, then $b\in v$. This can be shown to be equivalent to the definition \mbox{$\forall a\;(a\in u\;\lra\;\blackdiamond a\in v)$,} as usual. Furthermore, by residuation, the defining condition for the accessibility relation $I$ can be stated analogously and equivalently using $\Box$ (or $\Diamond$).
We state the completeness result below; details are left to the interested reader.
\begin{prop}\rm
{\bf ML}$_2$ is sound and complete in two-sorted frames satisfying conditions \eqref{D-cond}.\telos
\end{prop}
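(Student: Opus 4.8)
The plan is to prove soundness by a direct inspection of the axioms and rules of Table~\ref{bridge rules for ML2} against the semantic clauses of Table~\ref{sat modal}, and completeness by a two-sorted canonical model construction à la J\'onsson--Tarski/Stone. For soundness, the classical propositional axioms and rules for each sort are sound by the usual argument; the two monotone rules are validated because $\largesquare=-\largediamond-$ and $\lbbox$ (the right adjoint of $\largediamond$) are monotone; the normality axioms $\largesquare\alpha\wedge\largesquare\alpha'\vproves\largesquare(\alpha\wedge\alpha')$ and $\lbbox\beta\wedge\lbbox\beta'\proves\lbbox(\beta\wedge\beta')$ hold because $\largesquare$ and $\lbbox$ turn intersections into intersections. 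The residuation axioms $\alpha\proves\lbbox\largediamond\alpha$ and $\largediamond\lbbox\beta\vproves\beta$ are precisely the unit and counit of the adjunction $\largediamond\dashv\lbbox$ that $I$ generates on the two powersets, hence hold in every model; the axioms $\top\proves\lbbox\type{t}$ and $\type{t}\proves\largesquare\top$ state $\lbbox Y=X$ and $\largesquare X=Y$, which are immediate from the clauses; and the D-axioms $\lbbox\beta\proves\lbdiamond\beta$, $\largesquare\alpha\vproves\largediamond\alpha$ are exactly the frame conditions \eqref{D-cond}.

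For completeness I would form the two-sorted Lindenbaum--Tarski algebra $(\mathcal{A},\mathcal{B},\Diamond,\bbox)$, where $\mathcal{A}$ (resp. $\mathcal{B}$) is the Boolean algebra of first-sort (resp. second-sort) sentences modulo provable equivalence, $\bbox,\largesquare$ are well defined by the monotone rules, and the bridge rules yield the algebraic identities noted in the text, namely the adjunction $\Diamond a\leq b$ iff $a\leq\bbox b$ (and dually $\blackdiamond b\leq a$ iff $b\leq\Box a$) and the D-inequalities $\bbox b\leq\blackdiamond b$, $\Box a\leq\Diamond a$. I first record that combining $\bbox\bot\leq\blackdiamond\bot$ with $\bbox\top=\top$ forces $\bbox\bot=\bot$, and dually $\largesquare\bot=\bot$. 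The canonical frame is $\mathfrak{F}_c=(X,I,Y)$ with $X=\ufilt(\mathcal{A})$, $Y=\ufilt(\mathcal{B})$, and $uIv$ iff $(\forall b\in\mathcal{B})(\bbox b\in u\Rightarrow b\in v)$, equivalently $(\forall a\in\mathcal{A})(a\in u\Rightarrow\blackdiamond a\in v)$; the canonical valuation is $\imath(P_i)=\{u\midsp P_i\in u\}$, $\imath(Q_i)=\{v\midsp Q_i\in v\}$. One checks $\mathfrak{F}_c$ satisfies \eqref{D-cond}: given $u\in X$, the set $F_u=\{b\in\mathcal{B}\midsp\bbox b\in u\}$ is a filter of $\mathcal{B}$ by normality and monotonicity of $\bbox$, and it is proper since $\bbox\bot=\bot\notin u$, so it extends to some $v\in Y$ with $F_u\subseteq v$, i.e. $uIv$; the second clause of \eqref{D-cond} is symmetric, using $\largesquare\bot=\bot$.

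The heart of the argument is the Truth Lemma, stating $u\models\alpha$ iff $\alpha\in u$ and $v\models\beta$ iff $\beta\in v$, proved by simultaneous induction on the complexity of $\alpha$ and $\beta$. The Boolean cases are standard. In the modal case $u\models\lbbox\beta$: if $\lbbox\beta\in u$ then $uIv$ gives $\beta\in v$ by definition of $I$, hence (IH) $v\models\beta$; conversely, if $\lbbox\beta\notin u$ I must produce $v\in Y$ with $uIv$ and $\beta\notin v$. For this take $F_u=\{b\midsp\bbox b\in u\}$ (a proper filter, as above); since $\bbox\beta\notin u$ we have $\beta\notin F_u$, so no member of $F_u$ lies below $\beta$, whence $F_u\cup\{\neg\beta\}$ generates a proper filter, which extends to an ultrafilter $v$. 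Then $F_u\subseteq v$ forces $uIv$, while $\neg\beta\in v$ gives $\beta\notin v$, so by IH $v\not\models\beta$ and therefore $u\not\models\lbbox\beta$. The case $v\models\largesquare\alpha$ iff $\largesquare\alpha\in v$ is treated symmetrically, with $X$ in place of $Y$, $\largesquare$ in place of $\bbox$, and $\largesquare\bot=\bot$ in place of $\bbox\bot=\bot$.

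Finally, from the Truth Lemma completeness follows in the usual way: if $\alpha\not\proves\alpha'$ then $[\alpha]\not\leq[\alpha']$ in $\mathcal{A}$, so by the Boolean ultrafilter theorem some $u\in X$ has $[\alpha]\in u$, $[\alpha']\notin u$, giving $u\models\alpha$ and $u\not\models\alpha'$, hence $\alpha\not\models\alpha'$ in the canonical model over $\mathfrak{F}_c$ (which satisfies \eqref{D-cond}); the second sort is identical. Together with soundness this yields $\alpha\proves\alpha'$ iff $\alpha\models\alpha'$ and $\beta\vproves\beta'$ iff $\beta\models\beta'$ over the class of frames satisfying \eqref{D-cond}. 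The only genuine subtlety is the two-sorted existence step in the Truth Lemma, and keeping careful track of which Boolean algebra and which direction of the adjunction is in play; no idea beyond the classical canonical-model method is needed.
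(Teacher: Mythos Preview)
Your proposal is correct and follows exactly the approach the paper indicates: the paper merely sketches the argument (ultrafilters of each Lindenbaum--Tarski Boolean algebra, canonical $I$ defined by $uIv$ iff $\forall b\,(\bbox b\in u\Rightarrow b\in v)$, equivalently via $\blackdiamond$) and explicitly leaves the details to the reader, and you have supplied precisely those details, including the verification of \eqref{D-cond} and the existence step in the Truth Lemma. One cosmetic point: keep the sorting of the constants straight (e.g.\ write $\bbox\type{f}=\bot$ rather than $\bbox\bot=\bot$), but the argument itself is sound.
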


The proof systems of the modal companion logics {\bf sub.ML}$_2$, extending {\bf ML}$_2$, are specified as follows.\\

\paragraph{nfl.ML$_2$} Monotonicity and residuation axioms for $\lfspoon,\odot,\rfspoon$ are added\\

\begin{tabular}{llllllll}
$\infrule{\alpha\proves\alpha_1\hskip5mm\alpha'\proves\alpha'_1}{\alpha\odot\alpha'\proves\alpha_1\odot\alpha'_1}$ &
\\[1mm]
$\alpha\odot(\alpha\rfspoon\alpha')\proves\alpha'$ & $\alpha'\proves\alpha\rfspoon(\alpha\odot\alpha')$ & $\infrule{\alpha\proves\alpha_1\hskip5mm\alpha'_1\proves\alpha'}{\alpha_1\rfspoon\alpha'_1\proves\alpha\rfspoon\alpha'}$\\[1mm]
$\alpha'\proves(\alpha\odot\alpha')\lfspoon\alpha$ & $(\alpha'\lfspoon\alpha)\odot\alpha\proves\alpha'$
&$\infrule{\alpha_1\proves\alpha\hskip5mm\alpha'\proves\alpha_1'}{\alpha\lfspoon\alpha'\proves\alpha_1\lfspoon\alpha_1'}$
\end{tabular}\\

\paragraph{fl.ML$_2$\ =\ {\bf nfl.ML}$_2$ + Associativity axioms}\mbox{}\\

$\alpha_1\odot(\alpha_2\odot\alpha_3)\proves (\alpha_1\odot\alpha_2)\odot\alpha_3$ and $(\alpha_1\odot\alpha_2)\odot\alpha_3\proves \alpha_1\odot(\alpha_2\odot\alpha_3)$ \\

\paragraph{bci.ML$_2$\ =\ {\bf fl.ML}$_2$ + Commutativity axiom}\mbox{}\\

$\alpha\odot\alpha'\proves\alpha'\odot\alpha$ \\

\paragraph{bcw.ML$_2$\ =\ {\bf bci.ML}$_2$ + Contraction axiom}\mbox{}\\

$\alpha\wedge\alpha'\proves\alpha\odot\alpha'$\\

\paragraph{bck.ML$_2$\ =\ {\bf bci.ML}$_2$ + Controlled  weakening axiom}\mbox{}\\

$\lbbox\beta\odot\lbbox\beta'\proves\lbbox\beta'$ \\

It follows from the axiomatization that the Lindenbaum algebra of {\bf nfl.ML}$_2$ is a sorted, residuated modal algebra $\Diamond:\mathcal{A}\leftrightarrows\mathcal{B}:\bbox$, where in addition $(\mathcal{A},\lfspoon,\circ,\rfspoon)$ is a residuated Boolean algebra. Adding associativity, commutativity and contraction axioms, the operator $\circ$ of the algebra $(\mathcal{A},\lfspoon,\circ,\rfspoon)$ is associative, commutative, contractive, respectively, and, with commutativity, $\lfspoon=\rfspoon$.
Note that contractiveness of $\odot$ can be assumed, yielding {\bf bcw.ML}$_2$, but not weakening, which can only be introduced in a controlled way (reminiscent of Linear Logic's  ! operator). Recall that satisfaction clauses (on top of the usual clauses for {\bf CPL}, for each sort of sentences) have been specified in \mbox{Table \ref{sat modal}.}

\begin{thm}[sub.ML$_2$ Soundness]\rm
\label{subML2 soundness}
{\bf sub.ML}$_2$ is sound in $\mathbb{SUB}$-frames.
\end{thm}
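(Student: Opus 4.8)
The plan is to prove soundness by induction on the length of derivations in the proof system of {\bf sub.ML}$_2$, showing that every derivable sequent $\alpha\proves\alpha'$ (resp.\ $\beta\vproves\beta'$) satisfies $\val{\alpha}_\imath\subseteq\val{\alpha'}_\imath$ (resp.\ $\yvval{\beta}_\imath\subseteq\yvval{\beta'}_\imath$) in every model $\mathfrak{M}=(\mathfrak{F},\imath)$ over a $\mathbb{SUB}$-frame $\mathfrak{F}$. The classical propositional fragment in each sort is handled in the standard way, and the axioms and rules inherited from {\bf ML}$_2$ — the residuation/bridge axioms, the $\largesquare,\lbbox$ monotonicity and additivity rules, and the D-axioms $\top\proves\lbbox\top$, $\type{t}\proves\largesquare\type{t}$ — are already known to be valid over frames satisfying \eqref{D-cond}; since every $\mathbb{SUB}$-frame of Definition \ref{frame classes} satisfies \eqref{D-cond}, these cases carry over verbatim. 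It therefore remains only to check, for each of {\bf nfl.ML}$_2$, {\bf fl.ML}$_2$, {\bf bci.ML}$_2$, {\bf bcw.ML}$_2$, {\bf bck.ML}$_2$, that the extra axioms and rules are valid over the corresponding frame class; since the constraints defining $\mathbb{SUB}$ include all those defining the subsystems, modularity of the verification then gives the result for each system.

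First I would dispose of {\bf nfl.ML}$_2$. By the satisfaction clauses of Table \ref{sat modal} one has $\val{\alpha\odot\alpha'}=\val{\alpha}\bigodot\val{\alpha'}$, $\val{\alpha\rfspoon\alpha'}=\val{\alpha}\tilde{\Ra}\val{\alpha'}$ and $\val{\alpha'\lfspoon\alpha}=\val{\alpha'}\tilde{\La}\val{\alpha}$, and by Lemma \ref{modal residuals lemma} the structure $(\powerset(X),\tilde{\La},\bigodot,\tilde{\Ra})$ is a residuated Boolean algebra: $\bigodot$ distributes over unions, hence is monotone in each argument, and $U\bigodot U'\subseteq W\iff U'\subseteq U\tilde{\Ra}W\iff U\subseteq W\tilde{\La}U'$, which also forces the corresponding (anti)monotonicity of $\tilde{\Ra},\tilde{\La}$. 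Each monotonicity rule and each residuation axiom for $\lfspoon,\odot,\rfspoon$ thus becomes, under $\val{\cdot}$, an inclusion that holds in {\em any} residuated Boolean algebra, hence over {\em any} frame, and in particular over every $\mathbb{NFL}$-frame.

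Next come the structural axioms, for which I would invoke Lemma \ref{C and odot}. The two associativity axioms hold over $\mathbb{FL}$-frames because (C1) is equivalent to associativity of $\bigodot$; the commutativity axiom $\alpha\odot\alpha'\proves\alpha'\odot\alpha$ holds over $\mathbb{BCI}$-frames because (C2) is equivalent to commutativity of $\bigodot$; the contraction axiom $\alpha\wedge\alpha'\proves\alpha\odot\alpha'$ holds over $\mathbb{BCW}$-frames because, by Lemma \ref{C and odot}(4), (C4) gives $U\cap W\subseteq U\bigodot W$ for all $U,W\subseteq X$, together with $\val{\alpha\wedge\alpha'}=\val{\alpha}\cap\val{\alpha'}$. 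For the controlled weakening axiom $\lbbox\beta\odot\lbbox\beta'\proves\lbbox\beta'$ over $\mathbb{BCK}$-frames, I would use that $\val{\lbbox\beta}=\lbbox\largediamond\val{\lbbox\beta}$ and $\val{\lbbox\beta'}=\lbbox\largediamond\val{\lbbox\beta'}$ are Galois-stable, hence $\preceq$-increasing; Lemma \ref{C and odot}(3) then gives $\val{\lbbox\beta}\bigodot\val{\lbbox\beta'}\subseteq\val{\lbbox\beta'}$ under (C3), which is exactly the required inclusion. Collecting these with the induction on derivations yields the theorem.

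The hard part will be the controlled weakening axiom: the point is precisely that one must {\em not} attempt to prove the unrestricted weakening $\alpha\odot\alpha'\proves\alpha'$, which would demand that $\bigodot$ be thinning on all subsets and is not a consequence of (C3); it is only the stability (equivalently, $\preceq$-increasingness) of the valuation of a $\lbbox$-sentence that makes the restricted form of Lemma \ref{C and odot}(3) applicable. A subsidiary care point is that all the structural axioms are schemata, so the verification should be phrased for the induced valuations $\val{\alpha},\val{\lbbox\beta}$ rather than for principal sets; but this is automatic, since the set inclusions in Lemma \ref{C and odot} were established for all subsets (resp.\ all stable subsets) of $X$.
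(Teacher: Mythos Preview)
Your proof is correct and follows essentially the same approach as the paper: reduce everything to Lemma \ref{C and odot}, and for the {\bf bck.ML}$_2$ controlled-weakening axiom observe that $\val{\lbbox\beta}$ is Galois stable (since $\lbbox=\lbbox\largediamond\lbbox$ by residuation), hence $\preceq$-increasing, so Lemma \ref{C and odot}(3) applies. The paper's own proof is a two-line pointer to exactly these facts; your write-up simply spells out the surrounding induction in more detail.
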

\begin{proof}
The proof was essentially given in Lemma \ref{C and odot}. For soundness of the {\bf bck.ML}$_2$ axiom $\lbbox\beta\odot\lbbox\beta'\proves\lbbox\beta'$ it only needs to be observed that, given a frame $(X,\upv,Y,R^{111})$, for any subset $V\subseteq Y$, the set $\lbbox V\subseteq X$ is $\preceq$-increasing, since it is a Galois stable set $\lbbox V=\lbbox\largediamond\lbbox V$ (by residuation).
\end{proof}

\subsection{Formal Concept Lattice Logics}
\label{stable ops section}
Let $\mathfrak{F}=(X,\upv,Y,R^{111})$ be a frame and let $\bigovert$ be the closure of the restriction of $\bigodot$ on the set $\gpsi$ of Galois stable subsets of $X$, i.e.   $A\bigovert C=\lbbox\largediamond(A\bigodot C)$.  The Galois dual operator $\bigovert^{\!\partial}$ is defined on co-stable sets $B=A\rperp, D=C\rperp$ by
\begin{equation}
\mbox{$B\bigovert^{\!\partial}D=(\lperp B\bigovert\lperp D)\rperp=(A\bigovert C)\rperp=\bigcap_{u\in A, u'\in C}R^{\partial 11}uu'$}\label{circ dual}
\end{equation}
where we let $R^{\partial 11}$ be defined by $R^{\partial 11}uu'=(R^{111}uu')\rperp$, i.e. for any $y\in Y$ it holds that $yR^{\partial 11}uu'$ iff  $\forall x\in X\;(xR^{111}uu'\lra x\upv y)$. Hence we have $B\bigovert^{\!\partial}D=\bigcap_{u\in A, u'\in C}R^{\partial 11}uu'$ and it then follows that $\bigovert^\partial$ distributes over arbitrary intersections of co-stable sets in each argument place. Therefore, by duality, $\bigovert$ distributes over arbitrary joins of Galois stable sets and hence it is residuated in $\gpsi$, with residuals $\La,\Ra$, i.e. $C\subseteq F\La A$ iff $A\bigovert C\subseteq F$ iff $F\subseteq A\Ra C$.
\begin{lemma}\rm
\label{sub la ra}
The residuals $\La,\Ra$ of $\bigovert$ are the restrictions of the residuals $\tilde{\La},\tilde{\Ra}$ of $\bigodot$ on stable sets. They are equivalently defined by
\begin{tabbing}
$A\Ra C$\ \==\ \=$\{x\in X\midsp \forall z\in X\;\forall v\in Y\;(z\in A\;\wedge\;C\upv v\;\lra\; vR^{\partial 11}zx))\}$\\
$C\La A$\>=\> $\{x\in X\midsp \forall z\in X\;\forall v\in Y\;(z\in A\;\wedge\;C\upv v\;\lra\; vR^{\partial 11}xz))\}$
\end{tabbing}
\end{lemma}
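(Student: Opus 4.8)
The plan is to leverage Lemma \ref{modal residuals lemma}, which already gives explicit formulas for the powerset residuals $\tilde{\La},\tilde{\Ra}$ of $\bigodot$, together with the fact (established just before the statement) that $\bigovert$ is residuated on $\gpsi$ with residuals $\La,\Ra$. First I would verify that, for stable $A,C,F$, the inclusion $A\bigovert C\subseteq F$ is equivalent to $A\bigodot C\subseteq F$: since $F$ is Galois stable, $F=\lbbox\largediamond F$, and $\bigovert$ is by definition the closure $A\bigovert C=\lbbox\largediamond(A\bigodot C)$, so $A\bigovert C\subseteq F$ iff $\lbbox\largediamond(A\bigodot C)\subseteq\lbbox\largediamond F=F$ iff $A\bigodot C\subseteq F$ (using monotonicity of the closure and that it fixes $F$). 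Consequently $C\subseteq F\La A$ iff $A\bigodot C\subseteq F$ iff $C\subseteq F\tilde{\La}A$, and since this holds for all stable $C$ while both $F\La A$ and $F\tilde{\La}A\cap(\text{stable sets})$ are themselves determined by which stable sets they contain (stable sets being closed under the relevant operations and $F\La A$ being stable), one concludes $F\La A=F\tilde{\La}A$ when $A,F$ are stable; symmetrically $A\Ra C=A\tilde{\Ra}C$. Strictly, I would argue $F\La A\in\gpsi$ (residuals of a residuated operator on $\gpsi$ land in $\gpsi$) and that $F\tilde\La A$ restricted appropriately agrees — the cleanest route is: $x\in F\La A$ iff $\Gamma x\subseteq F\La A$ iff $A\bigovert\Gamma x\subseteq F$ iff $A\bigodot\Gamma x\subseteq F$ iff $\Gamma x\subseteq F\tilde\La A$, and since $x\in\Gamma x$ this gives $x\in F\tilde\La A$; the converse uses that $F\tilde\La A$ is $\preceq$-increasing because $R^{111}$ interacts with $\preceq$ suitably, or more safely one notes $A\bigodot\{x\}\subseteq A\bigodot\Gamma x$ and closure under $\preceq$ of the relevant set. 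I expect to phrase it so that the restriction claim is immediate from the displayed chain of iff's.

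Next I would convert the powerset formulas of Lemma \ref{modal residuals lemma} into the stated Galois-relational form. Take $A\tilde{\Ra}C=\{x\mid\forall z,z'\,(z\in A\wedge z'R^{111}zx\lra z'\in C)\}$ with $A,C$ stable. The key observation is that for a stable set $C$ and any $z'\in X$, membership $z'\in C$ is equivalent to $\forall v\in Y\,(C\upv v\lra z'\upv v)$, because $C=\lperp(C\rperp)$ and $C\rperp=\{v\mid C\upv v\}$, so $z'\in\lperp(C\rperp)$ iff $z'\upv v$ for all $v$ with $C\upv v$. Substituting this characterization of $z'\in C$ into the formula, and recalling $vR^{\partial 11}zx$ iff $\forall z'\,(z'R^{111}zx\lra z'\upv v)$, I would push the quantifier $\forall v$ outward: $x\in A\tilde\Ra C$ iff $\forall z\in A\,\forall z'\,(z'R^{111}zx\lra\forall v\,(C\upv v\lra z'\upv v))$ iff $\forall z\in A\,\forall v\,(C\upv v\lra\forall z'\,(z'R^{111}zx\lra z'\upv v))$ iff $\forall z\in A\,\forall v\in Y\,(z\in A\wedge C\upv v\lra vR^{\partial 11}zx)$, which is the stated formula for $A\Ra C$. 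The formula for $C\La A$ follows the same manipulation starting from the $W\tilde\La U$ clause of Lemma \ref{modal residuals lemma}, merely swapping the positions of the arguments of $R^{111}$ (i.e. $z'R^{111}xz$ in place of $z'R^{111}zx$), which propagates to $vR^{\partial 11}xz$ in the final expression.

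The main obstacle — really the only point requiring care rather than mechanical rewriting — is the first part: justifying that the restrictions of $\tilde\La,\tilde\Ra$ to stable arguments genuinely coincide with $\La,\Ra$ on the nose (as sets), not merely that they agree on which stable sets they contain. This hinges on $F\tilde\Ra C$ and $F\tilde\La A$ already being $\preceq$-increasing (hence no closure is lost) whenever the arguments are stable. I would handle this by the $\Gamma x$-argument sketched above, or by directly checking $\preceq$-increasingness of $A\tilde\Ra C$ from the relational formula: if $x\in A\tilde\Ra C$ and $x\preceq x'$, one needs $z'R^{111}zx'\Rightarrow z'\in C$; this does not follow from $R^{111}$ alone in an unrestricted frame, so the honest route is the $\Gamma x$-reduction, using that stable sets are unions of the closed sets $\Gamma x$ they contain (Lemma 3.1 of \cite{discr}, invoked earlier) and that $\bigovert$ is a genuine residuated operator on $\gpsi$ so its residuals are automatically stable. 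Everything else is the substitution of the two equivalences "$z'\in C\Leftrightarrow\forall v(C\upv v\to z'\upv v)$" and "$vR^{\partial 11}zx\Leftrightarrow\forall z'(z'R^{111}zx\to z'\upv v)$" followed by a quantifier swap, which is routine.
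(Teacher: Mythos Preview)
Your approach is essentially the paper's: both dispose of the first claim via the equivalence $A\bigovert C\subseteq F\Leftrightarrow A\bigodot C\subseteq F$ for stable $F$, and for the explicit formulas both rewrite the membership condition $z'\in C$ (equivalently $R^{111}zx\subseteq C$) as $C\rperp\subseteq(R^{111}zx)\rperp=R^{\partial 11}zx$ using stability of $C$, then unfold to the $\forall v$ form. The only difference is packaging---you cite Lemma~\ref{modal residuals lemma} and substitute, whereas the paper runs a self-contained chain of iff's from $A\bigovert F\subseteq C$ down to $F\subseteq\{x\mid\ldots\}$; your worry about whether $A\tilde\Ra C$ is itself stable (so that the restriction claim holds on the nose) is legitimate, but the paper's one-line ``from which the first claim follows'' does not address it either.
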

\begin{proof}
Note that for a stable set $A=\lbbox\largediamond A$ and any set $U$ the inclusion $\lbbox\largediamond U\subseteq A$ obtains iff the inclusion $U\subseteq A$ does. In particular, $A\bigovert C\subseteq F$ iff $A\bigodot C\subseteq F$, from which the first claim of the corollary follows. For the second claim we do only the case for $\Ra$ (repeating the computation from \cite{discres}).
\begin{tabbing}
\hskip5mm\= $A\bigovert F\subseteq C$ \\
iff\> $A\bigodot F\subseteq C$ \\
iff\> $\forall z'\in X\;(z'\in A\bigodot F\;\lra\; z'\in C)$\\
iff\> $\forall z'\;(\exists x\in X\;\exists z\in X\;(z'R^{111}zx\;\wedge\;z\in A\;\wedge\;x\in F)\;\lra\;z'\in C)$\\
iff\> $\forall x,z,z'\in X\;((z'R^{111}zx\;\wedge\;z\in A\;\wedge\;x\in F)\;\lra\;z'\in C)$\\
iff\> $\forall x,z,z'\in X\;((z\in A\;\wedge\;x\in F)\;\lra\;(z'R^{111}zx\;\lra\;z'\in C))$\\
iff\> $\forall x,z\in X\;((z\in A\;\wedge\;x\in F)\;\lra\;\forall z'\in X\;(z'R^{111}zx\;\lra\;z'\in C))$\\
iff\> $\forall x,z\in X\;((z\in A\;\wedge\;x\in F)\;\lra\;(R^{111}zx\;\subseteq\; C))$\\
iff\> $\forall x,z\in X\;((z\in A\;\wedge\;x\in F)\;\lra\;(C\rperp\subseteq(R^{111}zx)\rperp))$\\
iff\> $\forall x,z\in X\;((z\in A\;\wedge\;x\in F)\;\lra\;(C\rperp\subseteq R^{\partial 11 }zx))$ \\
iff\> $\forall x,z\in X\;((z\in A\;\wedge\;x\in F)\;\lra\;\forall v\in Y\;(C\upv v\;\lra\; vR^{\partial 11  }zx))$\\
iff\> $\forall x,z\in X\;\forall v\in Y\;((z\in A\;\wedge\;x\in F)\;\lra\;(C\upv v\;\lra\; vR^{\partial 11 }zx))$\\
iff\> $\forall x,z\in X\;\forall v\in Y\;(x\in F\;\lra\;(z\in A\lra (C\upv v\;\lra\; vR^{\partial 11  }zx)))$\\
iff\> $\forall x,z\in X\;\forall v\in Y\;(x\in F\;\lra\;(z\in A\;\wedge\;C\upv v\;\lra\; vR^{\partial 11 }zx))$\\
iff\> $\forall x\in X\;(x\in F\;\lra\; \forall z\in X\;\forall v\in Y\;(z\in A\;\wedge\;C\upv v\;\lra\; vR^{\partial 11  }zx))$\\
iff\> $F\subseteq \{x\in X\midsp \forall z\in X\;\forall v\in Y\;(z\in A\;\wedge\;C\upv v\;\lra\; vR^{\partial 11 }zx))\}$\\
iff\> $F\subseteq A\Ra C$
\end{tabbing}
The case for $\La$ can be obtained from the above by appropriately modifying the last few lines, then changing bound variables to obtain the definition of $\La$ as stated in the lemma.
\end{proof}

\begin{lemma}\rm
\label{odot2overt}
If $\bigodot$ is associative, commutative, thinning, or contractive, then so is $\bigovert$. Consequently, the obvious correlation obtains between the frame constraints of Table \ref{constraints} and properties of the $\bigovert$ operator on the complete lattice $\gpsi$ of Galois stable subsets of $X$ (which is isomorphic to the formal concept lattice of the frame).
\end{lemma}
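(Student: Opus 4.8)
The plan is to read off three of the four cases directly from the definition $A\bigovert C=\lbbox\largediamond(A\bigodot C)$, using that $\lbbox\largediamond$ is a monotone idempotent closure operator on $\powerset(X)$ whose fixed points are exactly the stable sets, that meets in $\gpsi$ are set intersections, and that every stable set is $\preceq$-increasing. Commutativity is immediate: if $\bigodot$ is commutative then $\lbbox\largediamond(A\bigodot C)=\lbbox\largediamond(C\bigodot A)$. If $\bigodot$ is contractive, then for stable $A,C$ we get $A\cap C\subseteq A\bigodot C\subseteq\lbbox\largediamond(A\bigodot C)=A\bigovert C$, and $A\cap C$ is the meet of $A$ and $C$ in $\gpsi$. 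If $\bigodot$ is thinning on $\preceq$-increasing sets, then for stable $A,C$ we have $A\bigodot C\subseteq C$, hence $A\bigovert C=\lbbox\largediamond(A\bigodot C)\subseteq\lbbox\largediamond C=C$. So each of these three properties transfers to $\bigovert$ on $\gpsi$.

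The substantive case is associativity. The plan is first to establish the two ``nucleus'' identities
\[
\lbbox\largediamond\big(\lbbox\largediamond(U)\bigodot E\big)=\lbbox\largediamond(U\bigodot E),\qquad
\lbbox\largediamond\big(A\bigodot\lbbox\largediamond(V)\big)=\lbbox\largediamond(A\bigodot V),
\]
valid for \emph{arbitrary} $U,V\subseteq X$ as long as $A$ and $E$ are stable, and then, for stable $A,C,E$, to compute
\[
(A\bigovert C)\bigovert E=\lbbox\largediamond\big(\lbbox\largediamond(A\bigodot C)\bigodot E\big)=\lbbox\largediamond\big((A\bigodot C)\bigodot E\big)=\lbbox\largediamond\big(A\bigodot(C\bigodot E)\big)=\lbbox\largediamond\big(A\bigodot\lbbox\largediamond(C\bigodot E)\big)=A\bigovert(C\bigovert E),
\]
the central equality being the associativity of $\bigodot$, i.e.\ condition (C1), via Lemma \ref{C and odot}.

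To prove the first nucleus identity, the inclusion $\supseteq$ is monotonicity; for $\subseteq$ it suffices to show $\lbbox\largediamond(U)\bigodot E\subseteq\lbbox\largediamond(U\bigodot E)$ and then apply the closure. Since $\lbbox\largediamond(U\bigodot E)$ is stable, residuation of $\bigodot$ in the residuated Boolean algebra $(\powerset(X),\tilde{\La},\bigodot,\tilde{\Ra})$ rewrites this as $\lbbox\largediamond(U)\subseteq\lbbox\largediamond(U\bigodot E)\,\tilde{\La}\,E$. Now $\lbbox\largediamond(U\bigodot E)\,\tilde{\La}\,E$ is itself stable: by Lemma \ref{sub la ra} the residual $\La$ of $\bigovert$ taken in $\gpsi$ agrees on stable arguments with $\tilde{\La}$, and here both arguments $\lbbox\largediamond(U\bigodot E)$ and $E$ are stable, so this set lies in $\gpsi$. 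It contains $U$, because $U\bigodot E\subseteq\lbbox\largediamond(U\bigodot E)$ gives, again by residuation, $U\subseteq\lbbox\largediamond(U\bigodot E)\,\tilde{\La}\,E$; and a stable set containing $U$ contains $\lbbox\largediamond(U)$, which is what was needed. The second identity is the mirror image, run with $\tilde{\Ra}$ and the stable set $A\,\tilde{\Ra}\,\lbbox\largediamond(A\bigodot V)$; because the two sides go through the two different residuals, no appeal to commutativity is made, so the argument covers the non-commutative systems {\bf NFL} and {\bf FL} as well.

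The one genuine obstacle is precisely the stability of the residual sets $\lbbox\largediamond(U\bigodot E)\,\tilde{\La}\,E$ and $A\,\tilde{\Ra}\,\lbbox\largediamond(A\bigodot V)$: without it the nucleus identities, and hence associativity of $\bigovert$, can fail. This is exactly what Lemma \ref{sub la ra} supplies — that $\tilde{\La},\tilde{\Ra}$ send pairs of stable sets to stable sets — which in turn rests on the distribution of $\bigovert$ over arbitrary joins in $\gpsi$ recorded just before that lemma. Everything else is bookkeeping: when (C2) holds, $\tilde{\La}=\tilde{\Ra}$ and the two nucleus identities coincide, and in the collapsed regimes of Lemma \ref{C and odot}(5)--(6) the operator $\bigodot$, hence $\bigovert$, reduces to $\cap$. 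Finally, combining the four transfers with Lemma \ref{C and odot} and the isomorphism $\gpsi\iso\mathfrak{Con}(\mathfrak{F})$ yields the stated correspondence between the frame constraints of Table \ref{constraints} and the properties of $\bigovert$ on the formal concept lattice.
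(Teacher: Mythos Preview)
Your proof is correct. For commutativity, thinning and contractiveness your argument is identical to the paper's. For associativity you take a mildly different route: you first isolate the two \emph{nucleus identities}
\[
\lbbox\largediamond\big(\lbbox\largediamond(U)\bigodot E\big)=\lbbox\largediamond(U\bigodot E),
\qquad
\lbbox\largediamond\big(A\bigodot\lbbox\largediamond(V)\big)=\lbbox\largediamond(A\bigodot V)
\]
(with $E$, respectively $A$, stable) and then chain them through the associativity of $\bigodot$. The paper instead works directly with each inclusion $A\bigovert(C\bigovert E)\subseteq(A\bigovert C)\bigovert E$ and its converse, peeling off one closure at a time via residuation until the statement reduces to $A\bigodot(C\bigodot E)\subseteq(A\bigovert C)\bigovert E$ (and symmetrically), which then follows from associativity of $\bigodot$ and monotonicity. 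Both arguments rest on exactly the same two ingredients: the fact that $\lbbox\largediamond U\subseteq A$ iff $U\subseteq A$ for stable $A$, and Lemma~\ref{sub la ra} ensuring that $\tilde{\La},\tilde{\Ra}$ return stable sets on stable inputs. Your packaging as nucleus identities is the standard quantale-theoretic formulation and has the small advantage of being reusable (the identities hold for arbitrary $U,V$, not just stable ones); the paper's version is a little more direct for this single application. Substantively the two proofs are the same.
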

\begin{proof}
We first verify that associativity of $\bigodot$ implies associativity of $\bigovert$. Recall first that if $A$ is a stable set, then $\lbbox\largediamond U\subseteq A$ iff $U\subseteq A$, for any set $U$. This implies in particular that $F\subseteq A\Ra C$ (iff $A\bigovert F\subseteq C$) iff $A\bigodot F\subseteq C$, for stable sets $A,F,C$. We then obtain
\begin{tabbing}
$A\bigovert(C\bigovert E)\subseteq (A\bigovert C)\bigovert E$\hskip2mm\=iff\hskip2mm\=
$A\bigodot(C\bigovert E)\subseteq (A\bigovert C)\bigovert E$\\
\>iff\> $C\bigovert E\subseteq A\Ra((A\bigovert C)\bigovert E)$\\
\>iff\> $C\bigodot E\subseteq A\Ra((A\bigovert C)\bigovert E)$\\
\>iff\> $A\bigodot(C\bigodot E)\subseteq (A\bigovert C)\bigovert E$\\
Similarly,\\
$(A\bigovert C)\bigovert E\subseteq A\bigovert(C\bigovert E)$ \>iff\> $(A\bigodot C)\bigodot E\subseteq A\bigovert(C\bigovert E)$
\end{tabbing}
Hence, given associativity of $\bigodot$ and given that we have, by monotonicity, $A\bigodot(C\bigodot E)\subseteq A\bigovert(C\bigovert E)$ and, similarly, $(A\bigodot C)\bigodot E\subseteq (A\bigovert C)\bigovert E$, it follows that $\bigovert$ is associative. By duality, $B\bigovert^{\!\partial}D=(\lperp B\bigovert\lperp D)\rperp$ and $A\bigovert C=\lperp(A\rperp\bigovert^{\!\partial} C\rperp)$, this implies associativity of $\bigovert^{\!\partial}$.   For the reader's reassurance we display the calculation, given formal concepts $(A,B), (C,D), (E,F)$ (i.e. $B=A\rperp, D=C\rperp, F=E\rperp$).
\begin{tabbing}
$(B\bigovert^{\!\partial}D)\bigovert^{\!\partial} F$ \hskip3mm\= =\hskip2mm\= $(A\rperp\bigovert^{\!\partial} C\rperp)\bigovert^{\!\partial}E\rperp$ \hskip3mm\= =\hskip2mm\=
$(A\bigovert C)\rperp\bigovert^{\!\partial} E\rperp$\\
\>=\> $((A\bigovert C)\bigovert E)\rperp$ \>=\> $(A\bigovert(C\bigovert E))\rperp$\\
\>=\> $A\rperp\bigovert^{\!\partial}(C\bigovert E)\rperp$ \>=\> $A\rperp\bigovert^{\!\partial}(C\rperp \bigovert^{\!\partial} E\rperp)$\\
\>=\> $B\bigovert^{\!\partial}(D\bigovert^{\!\partial}F)$
\end{tabbing}
It is clear that $\bigovert$ is commutative iff $\bigodot$ is.

For the thinning property, assuming $\bigodot$ is thinning on $\preceq$-increasing sets then in particular for stable sets $A,C$ we have $A\bigodot C\subseteq C$, hence $\lbbox\largediamond(A\bigodot C)\subseteq C$, given stability of $C$, which means by definition that $A\bigovert C\subseteq C$.

For contractiveness, assume $U\cap U'\subseteq U\bigodot U'$ for any sets $U,U'\subseteq X$. In particular for stable sets $A\cap C\subseteq A\bigodot C$ and, taking closure, $A\cap C\subseteq A\bigovert C$.
\end{proof}

Language and semantics for formal concept logics are those for substructural logics, presented in Section \ref{sub review}. As noted in Section \ref{sub review}, it is sufficient for the purposes of this article to consider proof systems as symmetric consequence systems $\varphi\proves\psi$, directly encoding the algebraic specification of the logic. The relational semantics of the substructural systems has been specified in Table \ref{sat}. Algebraic soundness and completeness are well-established in the literature on substructural logics and, thereby, completeness in relational semantics can be established by a representation argument. For soundness, we verify that the algebra of stable subsets of the set $X$ of a frame $\mathfrak{F}=(X,\upv,Y,R^{111})$ is in the equational class of the Lindenbaum-Tarski algebra of the logic.

\begin{thm}[Soundness for SUB]\rm
\label{sub soundness}
Every substructural system {\bf SUB} of Figure \ref{sub-hierarchy} is sound in the corresponding frame class $\mathbb{SUB}$, as defined in Definition \ref{frame classes}.
\end{thm}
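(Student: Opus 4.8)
The plan is to derive soundness from a single algebraic observation: for every frame $\mathfrak{F}=(X,\upv,Y,R^{111})$ in the class $\mathbb{SUB}$, the complete lattice $\gpsi$ of Galois stable subsets of $X$, equipped with the operations that the semantic clauses of Table~\ref{sat} induce on it, lies in the equational class axiomatising the Lindenbaum--Tarski algebras of {\bf SUB}. Since the proof system for {\bf SUB} was adopted as a symmetric consequence system $\varphi\proves\psi$ directly encoding that algebraic specification, every derivable sequent then holds as an inclusion $\val{\varphi}\subseteq\val{\psi}$ in every model over every $\mathbb{SUB}$-frame, which is precisely what soundness asserts; so the real content is to identify $\gpsi$ as the right algebra and to read off its structure.

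First I would recall from Section~\ref{sorted section} that $\gpsi$ is a bounded complete lattice, with meets given by intersection, joins by Galois closure of unions, top $X$ and bottom $\emptyset$ (both stable in view of condition~\eqref{D-cond}), and that every stable set is $\preceq$-increasing. Then I would check that the interpretation $\overline{V}$ of Table~\ref{sat} is well defined into $\gpsi$: $\wedge,\vee,\top,\bot$ manifestly preserve stability, $\circ$ is interpreted by $\bigovert$ which is a Galois closure, and $\ra,\la$ are interpreted by the operations $\Ra,\La$ of Lemma~\ref{sub la ra}, which live in $\gpsi$ because they are the restrictions to stable sets of the residuals $\tilde{\Ra},\tilde{\La}$ of $\bigodot$. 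The one computation that genuinely has to be carried out here is to confirm that the relational clauses of Table~\ref{sat} compute exactly these operations on $\gpsi$ rather than the ambient operations on $\powerset(X)$: from $\val{\varphi}=\lperp\yvval{\varphi}$ together with formula~\eqref{circ dual} for $\bigovert^{\!\partial}$ one reads off $\yvval{\varphi\circ\psi}=\val{\varphi}\rperp\bigovert^{\!\partial}\val{\psi}\rperp$, hence $\val{\varphi\circ\psi}=\val{\varphi}\bigovert\val{\psi}$, and matching the $\ra,\la$ clauses against Lemma~\ref{sub la ra} yields $\val{\varphi\ra\psi}=\val{\varphi}\Ra\val{\psi}$ and $\val{\psi\la\varphi}=\val{\psi}\La\val{\varphi}$; all three rest on the elementary fact that $\lbbox\largediamond U\subseteq A$ iff $U\subseteq A$ whenever $A$ is stable.

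With the algebra pinned down, the base case {\bf NFL} follows from Lemma~\ref{sub la ra}: $\gpsi$ with $\bigovert,\La,\Ra$ is a bounded lattice carrying a residuated binary operation, and since $\bigovert$ preserves all joins, in particular the empty one, $A\bigovert\bot=\bot=\bot\bigovert A$; this is an {\bf NFL}-algebra once the unit $\mathfrak{t}$ is suppressed as we have chosen to do. Climbing the hierarchy is then a matter of invoking Lemmas~\ref{C and odot} and~\ref{odot2overt}: on $\mathbb{FL}$-frames (C1) makes $\bigodot$, hence $\bigovert$, associative ({\bf FL}-algebra); adding (C2) on $\mathbb{BCI}$-frames makes $\bigovert$ commutative, so $\La=\Ra$ ({\bf FL}$_e$-algebra); adding (C4) on $\mathbb{BCW}$-frames makes $\bigovert$ contractive, $A\cap C\subseteq A\bigovert C$ ({\bf FL}$_{ec}$-algebra); and adding (C3) on $\mathbb{BCK}$-frames makes $\bigovert$ thinning on stable sets, $A\bigovert C\subseteq C$ ({\bf FL}$_{ew}$-algebra). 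In each case $\gpsi$ satisfies the defining (in)equalities of the corresponding variety, so the set of sequents valid over $\mathbb{SUB}$-frames is closed under the axioms and rules of the symmetric consequence system, and soundness follows.

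I expect the delicate points to be two. The first is keeping the bookkeeping straight between the two ambient powerset algebras and the stable-set algebra, i.e.\ confirming that the semantics of Table~\ref{sat} really yields the closed operation $\bigovert$ and its residuals and not something living in $\powerset(X)$; this is routine once the ``stability absorbs $\lbbox\largediamond$'' identity is used. The second, and the genuine subtlety, is the {\bf BCK} case: weakening is only a \emph{controlled} principle, not valid for arbitrary subsets of $X$ --- by Lemma~\ref{C and odot}(3), $\bigodot$ is merely thinning on $\preceq$-increasing sets --- so one must first observe that every interpretant $\val{\varphi}$ is a stable set and that stable sets are $\preceq$-increasing before the thinning of $\bigodot$ can be transported to $\bigovert$. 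Getting that order of reasoning right is where the care lies; everything else is assembly of the lemmas already in hand.
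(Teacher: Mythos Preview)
Your proposal is correct and follows essentially the same route as the paper: verify that for each $\mathbb{SUB}$-frame the stable-set algebra $\gpsi$ with $\bigovert,\La,\Ra$ lies in the appropriate variety, establish the base case {\bf NFL} via residuation (Lemma~\ref{sub la ra}), and climb the hierarchy by invoking Lemmas~\ref{C and odot} and~\ref{odot2overt} case by case, with the {\bf BCK} step handled by noting that stable sets are $\preceq$-increasing so that (C3) yields thinning. You are slightly more explicit than the paper in spelling out that the semantic clauses of Table~\ref{sat} compute exactly $\bigovert,\Ra,\La$ on $\gpsi$ (the paper leaves this implicit from the surrounding development), but this is a presentational refinement rather than a different argument.
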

\begin{proof}
Let $\mathfrak{F}=(X,\upv,Y,R^{111})$ be a frame and $\gpsi$ its complete lattice of stable sets. In Sections \ref{operators on sets section} and \ref{stable ops section} we have shown that $\gpsi$ is equipped with an operator $\bigovert$, residuated with implication operators $\La,\Ra$. This suffices for soundness of {\bf NFL} in the class $\mathbb{NFL}$.

For the associative Lambek Calculus {\bf FL}, the corresponding frame class $\mathbb{FL}$ assumes the constraint (C1) of Table \ref{constraints}. In Lemma \ref{C and odot} it was shown that (C1) is equivalent to associativity of $\bigodot$ and in Lemma \ref{odot2overt} we verified that associativity of $\bigodot$ implies associativity of $\bigovert$, which was defined on stable sets by setting $A\bigovert C=\lbbox\largediamond(A\bigodot C)$. Furthermore, it was shown in Section \ref{stable ops section} that $\bigovert$ is residuated in $\gpsi$ (and we in fact verified in Lemma \ref{sub la ra} that its residuals $\La,\Ra$ are the respective restrictions of the residuals $\tilde{\La},\tilde{\Ra}$ of $\bigodot$ in the algebra of all subsets of $X$). This establishes soundness of {\bf FL} in its homonym frame class $\mathbb{FL}$.

For {\bf BCI}, Lemma \ref{C and odot} verified the equivalence between the constraint (C2) and commutativity of $\bigodot$, from which commutativity of $\bigovert$ immediately follows, as observed in Lemma \ref{odot2overt}. Therefore, {\bf BCI} is sound in its corresponding frame class $\mathbb{BCI}$.

For {\bf BCW}, assuming contraction, it was verified in Lemma \ref{C and odot} that the relational constraint (C4) in the frame class $\mathbb{BCW}$ is equivalent to contractiveness of $\bigodot$, which immediately implies contractiveness of $\bigovert$, as observed in Lemma \ref{odot2overt}, i.e. the inequation $a\wedge b\leq a\circ b$ holds in $\gpsi$. In other words, {\bf BCW} is sound in the class $\mathbb{BCW}$.

For {\bf BCK}, assuming weakening, it was verified in Lemma \ref{C and odot} that if the relational constraint (C3) (i.e. $\forall x,z,z'\in X\;(xR^{111}zz'\lra z'\preceq x)$) holds, then the inclusion $U\bigodot V\subseteq V$ holds for $\preceq$-increasing sets, and since every stable set is $\preceq$-increasing it follows that $A\bigodot C\subseteq C$, from which we obtain $A\bigovert C=\lbbox\largediamond(A\bigodot C)\subseteq C$, given that we assume $C$ to be stable. Therefore, {\bf BCK} is sound in the $\mathbb{BCK}$ frame class and this concludes the proof.
\end{proof}

\section{Modal Translation of Substructural Logics}
\label{modal-t-section}
In \cite{pll7} we proved, via translation, that {\bf PLL} (Positive Lattice Logic)  is a fragment of {\bf ML}$_2$.  We defined a translation $\varphi^\bullet$ and a co-translation $\varphi^\circ$ of a sentence $\varphi$ of the  language of {\bf PLL} into the two-sorted modal language {\bf ML}$_2$, by recursion, and we proved that the translation is full and faithful. This section extends the result to any substructural logic.

In Table \ref{trans}, the translation of {\bf PLL} into {\bf ML}$_2$ defined in \cite{pll7} is extended to include the case of substructural logics.

\begin{table}[!htbp]
\caption{Translation and co-translation of {\bf SUB} into {\bf sub.ML}$_2$ }
\label{trans}
\begin{tabbing}
$p_i^\bullet$\hskip1.5cm\==\hskip1mm\= $\lbbox\largediamond P_i$\hskip3.2cm \=  $p_i^\circ$\hskip1.5cm\==\hskip2mm\= $\largesquare\neg P_i$\\
$\top^\bullet$\>=\> $\top$ \> $\top^\circ$ \>=\> ${\tt f}$\\
$\bot^\bullet$ \>=\> $\bot$   \> $\bot^\circ$ \>=\> ${\tt t}$\\
$(\varphi\wedge\psi)^\bullet$ \>=\> $\varphi^\bullet\wedge\psi^\bullet$ \>$(\varphi\wedge\psi)^\circ$\>=\> $\largesquare(\lbdiamond\varphi^\circ\vee\lbdiamond\psi^\circ)$\\
$(\varphi\vee\psi)^\bullet$\>=\> $\lbbox(\largediamond\varphi^\bullet\vee\largediamond\psi^\bullet)$
\> $(\varphi\vee\psi)^\circ$ \>=\> $\varphi^\circ\wedge\psi^\circ$\\[1mm]
$(\varphi\circ \psi)^\bullet$\>=\>$\lbbox\largediamond(\varphi^\bullet\odot \psi^\bullet)$ \> $(\varphi\circ \psi)^\circ$\>=\>$\largesquare\neg(\varphi^\bullet\odot \psi^\bullet)$\\
$(\varphi\ra\psi)^\bullet$ \>=\> $\varphi^\bullet\rfspoon\psi^\bullet$
\>
$(\varphi\ra\psi)^\circ$ \>=\> $\largesquare\neg(\varphi\ra\psi)^\bullet$\\
$(\psi\la\varphi)^\bullet$ \>=\> $\psi^\bullet\lfspoon\varphi^\bullet$
\>
$(\psi\la\varphi)^\circ$ \>=\> $\largesquare\neg(\psi\la\varphi)^\bullet$
\end{tabbing}
\end{table}

\begin{thm}\rm
\label{properties of trans}
Let $\mathfrak{F}$ be a $\mathbb{SUB}$-frame and $\mathfrak{M}=(\mathfrak{F},\imath)$ a {\bf sub.ML}$_2$-model. Define a {\bf SUB}-model $\mathfrak{N}$ on $\mathfrak{F}$ by setting $V(p_i)=\lbbox\largediamond \imath(P_i)$. Then for any {\bf SUB}-sentences $\varphi,\psi$
\begin{enumerate}
\item
$\val{\varphi}_\mathfrak{N}=\val{\varphi^\bullet}_\mathfrak{M}= \val{\lbbox\neg\varphi^\circ}_\mathfrak{M} =\val{\lbbox\largediamond\varphi^\bullet}_\mathfrak{M}$
\item
$\yvval{\varphi}_\mathfrak{N}=\yvval{\varphi^\circ}_\mathfrak{M}= \yvval{\largesquare\neg\varphi^\bullet}_\mathfrak{M} =\yvval{\largesquare\lbdiamond\varphi^\circ}_\mathfrak{M}$
\item
$\varphi\forces\psi$ iff $\varphi^\bullet\models\psi^\bullet$ iff $\psi^\circ\models\varphi^\circ$
\end{enumerate}
\end{thm}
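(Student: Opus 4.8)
The plan is to prove (1) and (2) together by induction on the complexity of $\varphi$, and then to read off (3). First note that $\mathfrak{N}$ really is a {\bf SUB}-model: $V(p_i)=\lbbox\largediamond\imath(P_i)=\lperp(\imath(P_i)\rperp)$ is Galois stable, hence lies in $\gpsi$. The core of the induction is the pair of identities $\val{\varphi^\bullet}_{\mathfrak M}=\val{\varphi}_{\mathfrak N}$ and $\yvval{\varphi^\circ}_{\mathfrak M}=\yvval{\varphi}_{\mathfrak N}$; everything else in (1) and (2) is then forced by a handful of one-model identities. In $\mathfrak N$, since the interpretation takes values in formal concepts, $\yvval{\varphi}_{\mathfrak N}=\val{\varphi}_{\mathfrak N}\rperp$ and $\val{\varphi}_{\mathfrak N}=\lperp\yvval{\varphi}_{\mathfrak N}$, with $\val{\varphi}_{\mathfrak N}$ stable and $\yvval{\varphi}_{\mathfrak N}$ co-stable. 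In $\mathfrak M$, reading the clauses of Table~\ref{sat modal} against \eqref{resid} yields $\val{\lbbox\beta}_{\mathfrak M}=\lbbox\yvval{\beta}_{\mathfrak M}$ (so $\val{\lbbox\neg\beta}_{\mathfrak M}=\lperp\yvval{\beta}_{\mathfrak M}$ and $\val{\lbdiamond\beta}_{\mathfrak M}=-\lperp\yvval{\beta}_{\mathfrak M}$) and $\yvval{\largesquare\alpha}_{\mathfrak M}=\largesquare\val{\alpha}_{\mathfrak M}$ (so $\yvval{\largesquare\neg\alpha}_{\mathfrak M}=\val{\alpha}_{\mathfrak M}\rperp$ and $\yvval{\largediamond\alpha}_{\mathfrak M}=\largediamond\val{\alpha}_{\mathfrak M}$), together with the identities recorded after Table~\ref{sat modal}: $\val{\alpha\odot\alpha'}_{\mathfrak M}=\val{\alpha}_{\mathfrak M}\bigodot\val{\alpha'}_{\mathfrak M}$, $\val{\alpha\rfspoon\alpha'}_{\mathfrak M}=\val{\alpha}_{\mathfrak M}\,\tilde{\Ra}\,\val{\alpha'}_{\mathfrak M}$, $\val{\alpha'\lfspoon\alpha}_{\mathfrak M}=\val{\alpha'}_{\mathfrak M}\,\tilde{\La}\,\val{\alpha}_{\mathfrak M}$. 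Granting the two core identities, the equality of $\val{\varphi}_{\mathfrak N}$ with the third member of (1) holds because $\val{\lbbox\neg\varphi^\circ}_{\mathfrak M}=\lperp\yvval{\varphi^\circ}_{\mathfrak M}=\lperp\yvval{\varphi}_{\mathfrak N}=\val{\varphi}_{\mathfrak N}$, and with the fourth because $\val{\lbbox\largediamond\varphi^\bullet}_{\mathfrak M}=\lbbox\largediamond\val{\varphi^\bullet}_{\mathfrak M}=\lbbox\largediamond\val{\varphi}_{\mathfrak N}=\val{\varphi}_{\mathfrak N}$ by stability; (2) follows dually.

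Turn now to the base cases and the easy connectives. For an atom, $\val{p_i^\bullet}_{\mathfrak M}=\lbbox\largediamond\imath(P_i)=V(p_i)=\val{p_i}_{\mathfrak N}$ by the definition of $\mathfrak N$, while $\yvval{p_i^\circ}_{\mathfrak M}=\yvval{\largesquare\neg P_i}_{\mathfrak M}=\imath(P_i)\rperp=(\lbbox\largediamond\imath(P_i))\rperp=\val{p_i}_{\mathfrak N}\rperp=\yvval{p_i}_{\mathfrak N}$, using $(\lbbox\largediamond V)\rperp=V\rperp$. For $\top$ and $\bot$ the constants coincide once one notes, from \eqref{D-cond}, that $X\rperp=\emptyset$ and $\emptyset\rperp=Y$. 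For $\varphi\wedge\psi$, both $\mathfrak M$ and $\mathfrak N$ read $\wedge$ on the $\bullet$-side as intersection, so the induction hypothesis gives $\val{(\varphi\wedge\psi)^\bullet}_{\mathfrak M}=\val{\varphi}_{\mathfrak N}\cap\val{\psi}_{\mathfrak N}=\val{\varphi\wedge\psi}_{\mathfrak N}$; the $\circ$-side is obtained by unfolding $\yvval{\largesquare(\lbdiamond\varphi^\circ\vee\lbdiamond\psi^\circ)}_{\mathfrak M}$ via $\lbdiamond V=-\lperp V$, $\lperp\yvval{\varphi}_{\mathfrak N}=\val{\varphi}_{\mathfrak N}$ and the induction hypothesis, which collapses it to $(\val{\varphi}_{\mathfrak N}\cap\val{\psi}_{\mathfrak N})\rperp=\yvval{\varphi\wedge\psi}_{\mathfrak N}$. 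Finally, for each of $\circ$, $\ra$, $\la$ the co-translation is literally $\largesquare\neg(\cdot)^\bullet$ (Table~\ref{trans}), so once the $\bullet$-clause is verified the $\circ$-clause follows from $\yvval{\largesquare\neg\alpha}_{\mathfrak M}=\val{\alpha}_{\mathfrak M}\rperp$ and $\yvval{\varphi}_{\mathfrak N}=\val{\varphi}_{\mathfrak N}\rperp$.

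There remain $\vee$, $\circ$ and $\ra$ (with $\la$ symmetric to $\ra$), all on the $\bullet$-side. For $\varphi\vee\psi$: the $\circ$-side is $\yvval{(\varphi\vee\psi)^\circ}_{\mathfrak M}=\yvval{\varphi^\circ}_{\mathfrak M}\cap\yvval{\psi^\circ}_{\mathfrak M}=\yvval{\varphi}_{\mathfrak N}\cap\yvval{\psi}_{\mathfrak N}=\yvval{\varphi\vee\psi}_{\mathfrak N}$ by the co-satisfaction clause of Table~\ref{sat}; the $\bullet$-side is $\val{\lbbox(\largediamond\varphi^\bullet\vee\largediamond\psi^\bullet)}_{\mathfrak M}=\lbbox\largediamond(\val{\varphi^\bullet}_{\mathfrak M}\cup\val{\psi^\bullet}_{\mathfrak M})=\lperp((\val{\varphi}_{\mathfrak N}\cup\val{\psi}_{\mathfrak N})\rperp)=\lperp(\yvval{\varphi}_{\mathfrak N}\cap\yvval{\psi}_{\mathfrak N})=\val{\varphi\vee\psi}_{\mathfrak N}$. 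For $\varphi\circ\psi$: unfolding the co-satisfaction clause of Table~\ref{sat} with $R^{\partial 11}zz'=(R^{111}zz')\rperp$ and \eqref{bigodot} gives $\yvval{\varphi\circ\psi}_{\mathfrak N}=\bigcap_{z\in\val{\varphi}_{\mathfrak N},\,x\in\val{\psi}_{\mathfrak N}}(R^{111}zx)\rperp=(\val{\varphi}_{\mathfrak N}\bigodot\val{\psi}_{\mathfrak N})\rperp$, hence $\val{\varphi\circ\psi}_{\mathfrak N}=\lbbox\largediamond(\val{\varphi}_{\mathfrak N}\bigodot\val{\psi}_{\mathfrak N})$, which matches $\val{(\varphi\circ\psi)^\bullet}_{\mathfrak M}=\lbbox\largediamond(\val{\varphi^\bullet}_{\mathfrak M}\bigodot\val{\psi^\bullet}_{\mathfrak M})$ by induction. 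For $\varphi\ra\psi$: in the satisfaction clause of Table~\ref{sat}, the inner condition $\forall y\in Y\,(y\dforces\psi\to yR^{\partial 11}zx)$ says $\yvval{\psi}_{\mathfrak N}\subseteq(R^{111}zx)\rperp$, which by \eqref{galois} and $\lperp\yvval{\psi}_{\mathfrak N}=\val{\psi}_{\mathfrak N}$ is equivalent to $R^{111}zx\subseteq\val{\psi}_{\mathfrak N}$; substituting, $\val{\varphi\ra\psi}_{\mathfrak N}=\{x\in X\midsp\forall z,z'\in X\,(z\in\val{\varphi}_{\mathfrak N}\wedge z'R^{111}zx\to z'\in\val{\psi}_{\mathfrak N})\}$, which by Lemma~\ref{modal residuals lemma} equals $\val{\varphi}_{\mathfrak N}\,\tilde{\Ra}\,\val{\psi}_{\mathfrak N}=\val{\varphi^\bullet}_{\mathfrak M}\,\tilde{\Ra}\,\val{\psi^\bullet}_{\mathfrak M}=\val{(\varphi\ra\psi)^\bullet}_{\mathfrak M}$, using the induction hypothesis and $\val{\alpha\rfspoon\alpha'}_{\mathfrak M}=\val{\alpha}_{\mathfrak M}\,\tilde{\Ra}\,\val{\alpha'}_{\mathfrak M}$. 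This closes the induction.

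Finally (3). Since $\val{\varphi}_{\mathfrak N},\val{\psi}_{\mathfrak N}$ are stable, $\varphi\forces\psi$, i.e.\ $\val{\varphi}_{\mathfrak N}\subseteq\val{\psi}_{\mathfrak N}$, is equivalent by (1) to $\val{\varphi^\bullet}_{\mathfrak M}\subseteq\val{\psi^\bullet}_{\mathfrak M}$, i.e.\ $\varphi^\bullet\models\psi^\bullet$; and since $(\;)\rperp$ is an inclusion-reversing bijection from $\gpsi$ onto the co-stable sets with $\val{\varphi}_{\mathfrak N}\rperp=\yvval{\varphi}_{\mathfrak N}$, the inclusion $\val{\varphi}_{\mathfrak N}\subseteq\val{\psi}_{\mathfrak N}$ is equivalent to $\yvval{\psi}_{\mathfrak N}\subseteq\yvval{\varphi}_{\mathfrak N}$, which by (2) is $\yvval{\psi^\circ}_{\mathfrak M}\subseteq\yvval{\varphi^\circ}_{\mathfrak M}$, i.e.\ $\psi^\circ\models\varphi^\circ$. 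I expect the main obstacle to be the fusion and implication cases: there the ternary-relation clauses of Table~\ref{sat}, which invoke the derived sorted relation $R^{\partial 11}$ and quantify over attributes, must be reconciled with the Boolean residuated operators $\bigodot,\tilde{\Ra},\tilde{\La}$ of $\powerset(X)$ supplied by Lemma~\ref{modal residuals lemma}; this is exactly the bridge between the concept-lattice semantics of {\bf SUB} and the residuated-modal-algebra semantics of {\bf sub.ML}$_2$, and it hinges on the definitional identity $R^{\partial 11}zz'=(R^{111}zz')\rperp$ together with \eqref{galois}. A secondary, purely bookkeeping burden is keeping the two sorts and the two families of interior/closure operators apart and remembering that the constant cases depend on \eqref{D-cond}.
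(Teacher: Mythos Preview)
Your proof is correct and follows essentially the same approach as the paper: simultaneous structural induction for (1) and (2), with (3) an immediate consequence. The paper delegates the {\bf PLL} cases (atoms, $\top$, $\bot$, $\wedge$, $\vee$) to \cite{pll7} and then treats only $\circ,\ra,\la$, whereas you work them all out explicitly; and for the implication case the paper invokes Lemma~\ref{sub la ra} (that $\Ra$ is the restriction of $\tilde{\Ra}$ to stable sets), while you instead unfold the satisfaction clause directly and appeal to Lemma~\ref{modal residuals lemma}, which amounts to re-deriving that restriction fact inline. These are differences in packaging, not in strategy.
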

\begin{proof}
Claim 3) is an immediate consequence of the first two, which we prove simultaneously by structural induction. Note that, for 2), the identities $\yvval{\varphi}_\mathfrak{N}= \yvval{\largesquare\neg\varphi^\bullet}_\mathfrak{M} =\yvval{\largesquare\lbdiamond\varphi^\circ}_\mathfrak{M}$ are easily seen to hold for any $\varphi$, given the proof of claim 1), since
\[
\yvval{\varphi}_\mathfrak{N}=\val{\varphi}_\mathfrak{N}^\upv=\largesquare(-\val{\varphi^\bullet}_\mathfrak{M})=
\val{\largesquare\neg\varphi^\bullet}_\mathfrak{M}
\]
\[
\yvval{\varphi}_\mathfrak{N}=\largesquare(-\val{\varphi^\bullet}_\mathfrak{M})= \largesquare(-\val{\lbbox\largediamond\varphi^\bullet}_\mathfrak{M})=
\val{\largesquare\lbdiamond\largesquare\neg\varphi^\bullet}_\mathfrak{M}=
\val{\largesquare\lbdiamond\varphi^\circ}_\mathfrak{M}
\]

For the induction proof, the cases for {\bf PLL} have been covered in \cite{pll7}, Proposition 4.1, to which we refer the reader for details.

\begin{enumerate}
\item[(Case $\bigovert$)] The following calculation proves the claim.
\begin{tabbing}
$\val{\varphi\circ\psi}_\mathfrak{N}$\hskip1mm\==\hskip1mm\= $\val{\varphi}_\mathfrak{N}\bigovert\val{\psi}_\mathfrak{N}$\hskip1.8cm \= \\
\>=\> $\val{\varphi^\bullet}_\mathfrak{M}\bigovert\val{\psi^\bullet}_\mathfrak{M}$ \> By induction\\
\>=\> $\lbbox\largediamond(\val{\varphi^\bullet}_\mathfrak{M}\bigodot(\val{\psi^\bullet}_\mathfrak{M})$\> \\
\>=\> $\lbbox\largediamond\val{\varphi^\bullet\bigodot\psi^\bullet}_\mathfrak{M}$ \> \\
\>=\> $\val{\lbbox\largediamond(\varphi^\bullet\bigodot\psi^\bullet)}_\mathfrak{M}$ \> \\
\>=\> $\val{(\varphi\circ\psi)^\bullet}_\mathfrak{M}$ \> By defn of modal translation
\end{tabbing}
The other two identities for claim 1) are straightforward, given definitions. For 2), observe that $\yvval{\varphi\circ\psi}_\mathfrak{N}=\yvval{\varphi}_\mathfrak{N}\bigovert^{\!\partial}\yvval{\psi}_\mathfrak{N}= \val{\varphi}_\mathfrak{N}^\upv\bigovert^{\!\partial}\val{\psi}_\mathfrak{N}^\upv$ $=(\val{\varphi}_\mathfrak{N}\bigovert\val{\psi}_\mathfrak{N})^\upv=\largesquare(- \val{\lbbox\largediamond(\varphi^\bullet\bigodot\psi^\bullet)}_\mathfrak{M})= \yvval{\largesquare\neg(\varphi^\bullet\bigodot\psi^\bullet)}_\mathfrak{M}$ but the latter is precisely $\yvval{(\varphi\circ\psi)^\circ}_\mathfrak{M}$.

\item[(Case $\Ra$)] By definition of semantics and by the fact that $\Ra$ is the restriction of $\tilde{\Ra}$ on stable sets, see Lemma \ref{sub la ra}, we obtain
\begin{tabbing}
$\val{\varphi\ra\psi}_\mathfrak{N}$\ \==\ \= $\val{\varphi}_\mathfrak{N}\Ra\val{\psi}_\mathfrak{N}$\\
\>=\> $\val{\varphi^\bullet}_\mathfrak{M}\tilde{\Ra}\val{\psi^\bullet}_\mathfrak{M}$ \hskip1.5cm By induction and Lemma \ref{sub la ra}\\
\>=\> $\val{\varphi^\bullet\rfspoon\psi^\bullet}_\mathfrak{M}$\\
\>=\> $\val{(\varphi\ra\psi)^\bullet}_\mathfrak{M}$
\end{tabbing}

\item[(Case $\La$)] Similar to the case for $\Ra$.
\end{enumerate}
Hence for any $\varphi$ we obtained that $\val{\varphi}_\mathfrak{N}=\val{\varphi^\bullet}_\mathfrak{M}$ and the identities $\val{\varphi^\bullet}_\mathfrak{M}= \val{\lbbox\neg\varphi^\circ}_\mathfrak{M} =\val{\lbbox\largediamond\varphi^\bullet}_\mathfrak{M}$ have been verified for the {\bf PLL} fragment in \cite{pll7} and are immediate for $\circ,\la,\ra$ by the way the co-translation was defined.
\end{proof}

To transfer the result to the provability relation,  completeness results need to be established, for each system {\bf SUB} and its companion modal logic {\bf sub.ML}$_2$.

\begin{thm}[Completeness for SUB]\rm
\label{sub completeness}
Every substructural system {\bf SUB} of Figure \ref{sub-hierarchy} is (sound, by Theorem \ref{sub soundness}, and) complete in the corresponding frame class $\mathbb{SUB}$, as defined in Definition \ref{frame classes}.
\end{thm}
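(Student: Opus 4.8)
The plan is to derive relational completeness from the (classical) \emph{algebraic} completeness of the substructural systems, by means of the representation results of \cite{sdl-exp,discres} recalled above. For each system {\bf SUB} of Figure \ref{sub-hierarchy} there is a variety of {\bf FL}-algebras (residuated lattices, augmented by the exchange, contraction and/or weakening identities as the system requires --- or, for {\bf NFL}, a non-associative {\bf FL}-algebra) that is sound and complete for {\bf SUB}: if the sequent $\varphi\proves\psi$ is not derivable in {\bf SUB}, then there are an algebra $\mathcal{L}$ in that variety and an assignment $v$ of the propositional variables into $\mathcal{L}$ with $\val{\varphi}_v\not\leq\val{\psi}_v$ in $\mathcal{L}$. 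It therefore suffices, starting from such a separating algebra $\mathcal{L}$, to produce a frame $\mathfrak{F}\in\mathbb{SUB}$ and a {\bf SUB}-model $\mathfrak{N}$ on $\mathfrak{F}$ with $\val{\varphi}_\mathfrak{N}\not\subseteq\val{\psi}_\mathfrak{N}$: this is a countermodel to $\varphi\forces\psi$ over the required frame class, which is the contrapositive of completeness.

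For the frame I would take the canonical frame of $\mathcal{L}$ in the concrete Hartonas--Dunn form: $X=\filt(\mathcal{L})$ and $Y=\idl(\mathcal{L})$ are the lattice filters and ideals of $\mathcal{L}$, $x\upv y$ iff $x\cap y\neq\emptyset$ (so that conditions \eqref{D-cond} hold), and the relation $R^{111}=R_\circ$ is the canonical relation of the fusion operator, $u R_\circ u_1 u_2$ iff $\circ^\sharp(u_1,u_2)\leq u$, where $\circ^\sharp$ is the filter operator induced by $\circ$ as in the Canonical Extensions section. Since $\preceq$ on this frame is filter inclusion, $R_\circ u_1 u_2=\Gamma(\circ^\sharp(u_1,u_2))$, the principal $\preceq$-upset of $\circ^\sharp(u_1,u_2)$. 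By the representation results of \cite{sdl-exp,discres} (applied to $\circ$ as a normal operator of type $(1,1;1)$), the map $\alpha_X\colon a\mapsto\{x\in\filt(\mathcal{L})\midsp a\in x\}$ is an embedding of $\mathcal{L}$, as a bounded lattice with the operations $\circ,\la,\ra$, into $\gpsi$ equipped with $\bigovert$ and its residuals $\La,\Ra$; in particular $\alpha_X$ sends $\circ$ to $\bigovert$ and $\la,\ra$ to $\La,\Ra$. I would then set $V(p_i)=\alpha_X(v(p_i))$ (a stable set) and verify, by a routine structural induction on {\bf SUB}-sentences $\chi$ using the clauses of Table \ref{sat}, identity \eqref{circ dual} and Lemma \ref{sub la ra}, that $\val{\chi}_\mathfrak{N}=\alpha_X(\val{\chi}_v)$; since $\alpha_X$ reflects the order, $\val{\varphi}_\mathfrak{N}\not\subseteq\val{\psi}_\mathfrak{N}$ follows at once.

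The step requiring real care --- the main obstacle --- is to check that $\mathfrak{F}$ actually lies in $\mathbb{SUB}$, i.e.\ that $R_\circ$ satisfies the constraints of Table \ref{constraints} that define the class: (C1) for {\bf FL}; (C1), (C2) for {\bf BCI}; additionally (C3) for {\bf BCK}; additionally (C4) for {\bf BCW} (nothing for {\bf NFL}). The plan is to pass these from the algebraic properties of $\circ$ through the corresponding properties of $\circ^\sharp$ on filters --- the canonicity of the defining identities, as in \cite{dloa,sdl-exp,pnsds,discres}: $\circ^\sharp$ is associative whenever $\circ$ is, commutative whenever $\circ$ is, satisfies $u_2\subseteq\circ^\sharp(u_1,u_2)$ whenever $\circ$ obeys weakening (since then $a\circ b\leq b$, hence $b\!\uparrow\subseteq(a\circ b)\!\uparrow$), and satisfies $\circ^\sharp(u,u)\subseteq u$ whenever $\circ$ obeys contraction (since then $a,b\in u$ gives $a\wedge b\in u$ and $a\wedge b\leq a\circ b$, so $a\circ b\in u$). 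Using $R_\circ u_1 u_2=\Gamma(\circ^\sharp(u_1,u_2))$, monotonicity of $\circ^\sharp$ and distributivity of $\bigodot$ over unions, a short calculation then yields that $\bigodot$ is associative and commutative on the canonical frame, so (C1) and (C2) hold by Lemma \ref{C and odot} --- and (C1) holds throughout the hierarchy, since $\circ$ is already associative in {\bf FL}; that $x R_\circ x x$ holds because $\circ^\sharp(x,x)\subseteq x$, so (C4) holds by Lemma \ref{C and odot}; and that $x R_\circ z z'$ entails $z'\subseteq\circ^\sharp(z,z')\subseteq x$, i.e.\ $z'\preceq x$, which is precisely (C3). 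With the frame conditions established, $\mathfrak{F}\in\mathbb{SUB}$, and combined with the countermodel of the previous paragraph this closes the proof. I expect the genuine content to reside in the imported representation results and in this verification of (C1)--(C4) for $R_\circ$; the structural induction is entirely routine.
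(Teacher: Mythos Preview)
Your proposal is correct and follows essentially the same route as the paper: build the Hartonas--Dunn canonical frame $(\filt(\mathcal{L}),\upv,\idl(\mathcal{L}))$, define $R^{111}$ via the point operator $\circ^\sharp=\widehat{\circ}$, verify the frame constraints (C1)--(C4) from the corresponding algebraic identities of $\circ$ (this is exactly the paper's Canonical {\bf SUB}-Frame Lemma), and then check that the interpretation on the frame matches the semantic clauses. The only organizational difference is that you import the homomorphism property of $\alpha_X$ from the representation results of \cite{sdl-exp,discres} and dismiss the interpretation step as a routine induction, whereas the paper spells this out as a separate Canonical {\bf SUB}-Interpretation Lemma, directly verifying that the clauses of Table~\ref{sat} for $\circ,\ra,\la$ hold for the canonical (co)interpretation $\val{\varphi}=\{x\midsp[\varphi]\in x\}$; the paper also works with the Lindenbaum--Tarski algebra rather than an arbitrary separating algebra, but this is immaterial.
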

\begin{proof}
For completeness, let $\mathcal{L}$ be the Lindenbaum-Tarski algebra of the logic and let $\mathfrak{F}=(\filt(\mathcal{L}),\upv,\idl(\mathcal{L}))$ be the canonical lattice frame \cite{sdl}, where $\filt(\mathcal{L})$ is the complete lattice of (proper) lattice-filters, $\idl(\mathcal{L})$ is the complete lattice of (proper) lattice-ideals and $x\upv y$ iff $x\cap y\neq\emptyset$. Note that the canonical frame is {\em separated}   i.e. the pre-order induced by setting $x\preceq z$ iff $\{x\}\rperp\subseteq\{z\}\rperp$ is a partial order on $X$ and similarly for $Y$. This is because if $\{x\}\rperp\subseteq\{z\}\rperp$, then for any $a\in x$, $x\upv y_a$, hence $z\upv y_a$ which implies that $a\in z$. Furthermore, it follows from the definition of $\upv$ that it is increasing in each argument place.

Following \cite{dloa} and as detailed  in \cite{sdl-exp,discres}, define the canonical relation $R^{111}$  by first defining a point operator $x\widehat{\circ}z=\bigvee\{x_{a\circ b}\midsp a\in x, b\in z\}$ on filters, where $x,z,z'\in X$ and $x_e=e\!\uparrow$ designates a principal filter  and then letting $xR^{111}zz'\mbox{ iff }z\widehat{\circ}z'\leq x$, where $\leq$ designates filter inclusion.  Note  that for any $x,z,z'$ we easily obtain from the definition that
\[
xR^{111}zz'\;\mbox{ iff }\;\forall a,b\;(a\in z\;\wedge\;b\in z'\;\lra\;a\circ b\in x)
\]
It remains to show (a) that the canonical frame for a substructural logic {\bf SUB} is a $\mathbb{SUB}$-frame and that (b) the canonical (co)interpretation, defined by  $\val{\varphi}=\{x\in\filt(\mathcal{L})\midsp [\varphi]\in x\}$ and $\yvval{\varphi}=\{y\in\idl(\mathcal{L})\midsp [\varphi]\in y\}$ respects the recursive satisfaction conditions of Table \ref{sat}. We do this next in two lemmas, the canonical frame and canonical interpretation lemmas.
\end{proof}

\begin{lemma}[Canonical SUB-Frame Lemma]\rm
\label{sub frame lemma}
The canonical frame for a substructural logic {\bf SUB} is a $\mathbb{SUB}$-frame.
\end{lemma}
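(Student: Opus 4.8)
The plan is to unpack the explicit description of the canonical frame recorded in the proof of Theorem~\ref{sub completeness}. There $\mathfrak{F}=(\filt(\mathcal{L}),\upv,\idl(\mathcal{L}),R^{111})$ is separated, $x\upv y$ iff $x\cap y\neq\emptyset$, and $xR^{111}zz'$ iff $\forall a,b\,(a\in z\wedge b\in z'\lra a\circ b\in x)$, equivalently $z\,\widehat{\circ}\,z'\subseteq x$, where $z\,\widehat{\circ}\,z'$ is the filter generated by $\{a\circ b\midsp a\in z,b\in z'\}$ and $\subseteq$ is filter inclusion. First I would record two small observations used throughout: (i) in this frame $x\preceq z$ iff $x\subseteq z$ --- the forward direction is exactly the computation already given in the proof of Theorem~\ref{sub completeness}, the converse being immediate --- and (ii) $\widehat{\circ}$ is monotone in each argument with respect to inclusion. (The background conditions \eqref{D-cond} are also easily checked, since $\{0\}\in\idl(\mathcal{L})$ is disjoint from every proper filter and dually $\{1\}\in\filt(\mathcal{L})$ from every proper ideal.) By Definition~\ref{frame classes} it then remains to derive, system by system, the relevant subset of (C1)--(C4) from the (in)equalities holding in the Lindenbaum--Tarski algebra $\mathcal{L}$.

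The substantive case is (C1), needed for {\bf FL} and all stronger systems. The key reduction is that, using monotonicity of $\widehat{\circ}$ and canonical witnesses, statement~(1) of (C1) is equivalent to the inclusion $(u\,\widehat{\circ}\,v)\,\widehat{\circ}\,w\subseteq z$: take $x:=u\,\widehat{\circ}\,v$ in one direction, and use $u\,\widehat{\circ}\,v\subseteq x$ together with monotonicity in the other; symmetrically, statement~(2) is equivalent to $u\,\widehat{\circ}\,(v\,\widehat{\circ}\,w)\subseteq z$. Hence (C1) follows once $\widehat{\circ}$ is shown associative on filters, which in turn follows from associativity and monotonicity of $\circ$ in $\mathcal{L}$: if $c\geq p\circ r$ with $p\geq a\circ b$, $a\in x$, $b\in y$, $r\in z$, then $c\geq (a\circ b)\circ r=a\circ (b\circ r)$ with $b\circ r\in y\,\widehat{\circ}\,z$, giving $(x\,\widehat{\circ}\,y)\,\widehat{\circ}\,z\subseteq x\,\widehat{\circ}\,(y\,\widehat{\circ}\,z)$, the reverse inclusion being symmetric. (Alternatively one may cite the general fact from \cite{dloa,sdl-exp} that the point operator $\circ^\sharp=\widehat{\circ}$ inherits associativity from $\circ$.) The bookkeeping with generated filters here is the one genuinely non-routine step; everything else is a one-liner.

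The remaining cases are short. For {\bf BCI} and above, (C2) holds because commutativity of $\circ$ makes $z\,\widehat{\circ}\,z'$ and $z'\,\widehat{\circ}\,z$ the filter generated by the same set, so $z\,\widehat{\circ}\,z'\subseteq x$ iff $z'\,\widehat{\circ}\,z\subseteq x$. For {\bf BCK}, weakening gives $c\circ d\leq d$ for all $c,d\in\mathcal{L}$; so if $xR^{111}zz'$ and $b\in z'$, choosing any $a\in z$ (the filter $z$ is non-empty) yields $a\circ b\in x$ and $a\circ b\leq b$, hence $b\in x$; thus $z'\subseteq x$, i.e.\ $z'\preceq x$ by observation~(i), which is (C3). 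For {\bf BCW}, contraction gives $a\wedge b\leq a\circ b$ in $\mathcal{L}$; so for any filter $x$ and $a,b\in x$ we have $a\wedge b\in x$ and hence $a\circ b\in x$, i.e.\ $x\,\widehat{\circ}\,x\subseteq x$, which is $xR^{111}xx$, i.e.\ (C4). Assembling these according to Definition~\ref{frame classes} yields the lemma, with the anticipated difficulty concentrated entirely in the associativity argument for (C1).
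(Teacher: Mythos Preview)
Your proposal is correct and follows essentially the same route as the paper: reduce (C1) to associativity of the filter operator $\widehat{\circ}$ via the canonical witness $x=u\,\widehat{\circ}\,v$ (resp.\ $x=v\,\widehat{\circ}\,w$) plus monotonicity, and handle (C2)--(C4) by the obvious one-line computations from the corresponding (in)equalities in $\mathcal{L}$. The only cosmetic difference is that the paper proves associativity of $\widehat{\circ}$ by invoking its complete additivity and preservation of principal filters (from \cite{dloa}) to reduce to $x_{a\circ(b\circ c)}=x_{(a\circ b)\circ c}$, whereas you give a direct element-chasing argument through the generated filter; you already note this alternative yourself, and both arguments are equally short once one observes that the generating set $\{a\circ b\midsp a\in z,\,b\in z'\}$ is downward directed by monotonicity of $\circ$, so the generated filter is just its upward closure.
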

\begin{proof}
There is nothing to prove for {\bf NFL}. We argue separately each case of a logic above {\bf NFL} (see Figure \ref{sub-hierarchy}).\\

\paragraph{FL} To show that for all points $w, z, u, v\in X$, statements (1) and (2) below are equivalent in the canonical frame
    \begin{quote}
      (1) $\exists x\;(xR^{111}uv\;\wedge\;zR^{111}xw)$\hskip1cm
      (2) $\exists x\;(xR^{111}vw\;\wedge\;zR^{111}ux)$
    \end{quote}
note first that equivalence of (1), (2) is an immediate consequence of associativity of the point operator $\widehat{\circ}$. Indeed, assuming (1) holds, i.e. for some $x\in X$ both $u\widehat{\circ}v\leq x$ and $x\widehat{\circ}w\leq z$ hold, then $(u\widehat{\circ} v)\widehat{\circ}w\leq x\widehat{\circ}w\leq z$ and we may take $x'=v\widehat{\circ}w$. Similarly for the converse.

To prove associativity of the point operator $\widehat{\circ}$ note that
it follows from \cite{dloa} (Theorem 6.6) where point operators were first introduced that $\widehat{\circ}$ distributes over arbitrary joins in each argument place, given that the lattice operator $\circ$ distributes over binary joins and that (\cite{dloa}, Lemma 6.7) it preserves principal filters, i.e. $x_a\widehat{\circ}x_b=x_{a\circ b}$. It is then obtained  that

\begin{tabbing}
$u\widehat{\circ}(v\widehat{\circ}w)$ \ \==\ \= $(\bigvee_{a\in u}x_a)\widehat{\circ}((\bigvee_{b\in v}x_b)\widehat{\circ}(\bigvee_{c\in w}x_c))$\ \==\ \= $\bigvee_{a\in u,b\in v,c\in w}(x_a\widehat{\circ}(x_b\widehat{\circ}x_c))$\\
\>=\> $\bigvee_{a\in u,b\in v,c\in w}x_{a\circ(b\circ c)}$ \>=\> $\bigvee_{a\in u,b\in v,c\in w}x_{(a\circ b)\circ c}$\\
\>=\> $\cdots$\>=\> $(u\widehat{\circ}v)\widehat{\circ}w$
\end{tabbing}
and this establishes associativity of $\widehat{\circ}$. This implies, incidentally, that the associativity equation is canonical.
Indeed, recall  that the operator $\bigovert$, defined on stable sets by $A\bigovert C=\lbbox\largediamond(A\bigodot C)$, was shown in \cite{sdl-exp,discres} to be the $\sigma$-extension $\circ^\sigma$ of the lattice cotenability operator $\circ$. From \cite{mai-harding}, recall that $\circ^\sigma$ is first defined on closed elements of $\gpsi$ (sets of the form $\Gamma x=\{z\in X\midsp x\leq z\}$) by setting
\[
\Gamma z\circ^\sigma\Gamma z'=\bigwedge\{\Gamma x_{a\circ b}\midsp a\in z,b\in z'\}
\]
then extended to all stable sets using join-density of closed elements by setting $A\circ^\sigma C=\bigvee_{z\in A, z'\in C}(\Gamma z\circ^\sigma\Gamma z')$. As pointed out in \cite{sdl-exp,canonical,discres,discr}, this delivers the same map on stable sets as the map defined on closed elements by setting $\Gamma z\medcircle\Gamma z'=\Gamma(z\widehat{\circ}z')$, then again extending to all stable sets by using join-density of closed elements. This is immediate since $\bigwedge\{\Gamma x_{a\circ b}\midsp a\in z,b\in z'\}=\Gamma\left(\bigvee\{x_{a\circ b}\midsp a\in z,b\in z'\}\right)$, where by the definition of point operators in \cite{dloa} we have $\bigvee\{x_{a\circ b}\midsp a\in z,b\in z'\}=z\widehat{\circ}z'$. Thereby, the identity
 $A\bigovert(C\bigovert F)=(A\bigovert C)\bigovert F$ obtains on stable sets iff the identity $\Gamma u\bigovert(\Gamma v\bigovert\Gamma w)=(\Gamma u\bigovert\Gamma v)\bigovert\Gamma w$ on closed elements holds. Given the fact pointed out above that $\Gamma z\bigovert\Gamma z'=\Gamma z\medcircle\Gamma z'=\Gamma(z\widehat{\circ}z')$, associativity of the filter operator $\widehat{\circ}$ implies canonicity of the associativity equation since
$
\Gamma u\mbox{$\bigovert$}(\Gamma v\mbox{$\bigovert$}\Gamma w)=(\Gamma u\mbox{$\bigovert$}\Gamma v)\mbox{$\bigovert$}\Gamma w\mbox{ iff }\Gamma(u\widehat{\circ}(v\widehat{\circ}w))=\Gamma((u\widehat{\circ}v)\widehat{\circ}w)
$, which is equivalent to the identity $u\widehat{\circ}(v\widehat{\circ}w)=(u\widehat{\circ}v)\widehat{\circ}w$, given that the canonical lattice frame is separated.\\

\paragraph{BCI} The case is straightforward, since by definition of $\;\widehat{\circ}$, if $\circ$ is commutative, then so is the  operator $\widehat{\circ}$: $z\widehat{\circ}z'=\bigvee\{x_{a\circ b}\midsp a\in z, b\in z'\}=\bigvee\{x_{b\circ a}\midsp a\in z, b\in z'\}=z'\widehat{\circ}z$. Hence $xR^{111}zz'$ holds iff $xR^{111}z'z$ holds, i.e. the constraint (C2) holds in the canonical frame.\\

\paragraph{BCK}  Given $a\circ b\leq b$, we obtain $x_b\leq x_{a\circ b}$ for any $a$. Hence
\[
z'=\bigvee_{b\in z'}x_b\leq \bigvee_{b\in z'}x_{a\circ b}\leq\bigvee_{a\in z,b\in z'}x_{a\circ b}=z\widehat{\circ}z'
\]
Therefore, if  $xR^{111}zz'$, i.e. $z\widehat{\circ}z'\leq x$, it follows that $z'\leq z\widehat{\circ}z'\leq  x$ and thereby the constraint (C3) holds in the canonical frame.\\

\paragraph{BCW} We now have $a\wedge b\leq a\circ b$, i.e. $x_{a\circ b}\leq x_{a\wedge b}$. Then
\[
z\widehat{\circ}z'=\bigvee_{a\in z,b\in z'}x_{a\circ b}\leq\bigvee_{a\in z,b\in z'}x_{a\wedge b}=\bigvee_{a\in z,b\in z'}(x_a\vee x_b)=\left(\bigvee_{a\in z}x_a\right)\vee\left(\bigvee_{b\in z'}x_b\right)=z\vee z'
\]
In particular, $x\widehat{\circ}x\leq x\vee x=x$, i.e. $xR^{111}xx$ holds. Hence the constraint (C4) holds in the canonical frame of {\bf BCW}.
\end{proof}

\begin{lemma}[Canonical SUB-Interpretation Lemma]\rm
\label{sub int lemma}
The canonical interpretation,  assigning to a sentence $\varphi$ the formal concept $(\val{\varphi},\yvval{\varphi})$ with extent $\val{\varphi}=\{x\in X\midsp [\varphi]\in x\}$ and intent $\yvval{\varphi}=\{y\in Y\midsp [\varphi]\in y\}$, where $[\varphi]$ is the equivalence class of $\varphi$ under provability,  satisfies the recursive conditions of Table \ref{sat}.
\end{lemma}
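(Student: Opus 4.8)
The plan is to argue by induction on the structure of $\varphi$, proving alongside the satisfaction clauses the auxiliary fact that $(\val{\varphi},\yvval{\varphi})$ is a formal concept of the canonical frame; granting this, $\val{\varphi}=\lperp\yvval{\varphi}$ and $\yvval{\varphi}=\val{\varphi}\rperp$, so by the remark following Table \ref{sat} it suffices to verify a single clause (satisfaction or co-satisfaction) per connective, reading the subformula occurrences of $\val{}$, $\yvval{}$ as the canonical interpretation furnished by the induction hypothesis; note the clauses, hence the lemma, are uniform over all systems {\bf SUB} of Figure \ref{sub-hierarchy}. The concept-fact in fact needs no induction: $\val{\varphi}=\{x\in\filt(\mathcal{L})\midsp[\varphi]\in x\}=\Gamma x_{[\varphi]}$ is a principal $\preceq$-upset (recall $\preceq$ is filter inclusion on the canonical frame), hence Galois stable, and since the principal filter $x_{[\varphi]}$ lies in $\val{\varphi}$ while ideals are downward closed one reads off directly $\val{\varphi}\rperp=\{y\in\idl(\mathcal{L})\midsp[\varphi]\in y\}=\yvval{\varphi}$, and dually $\lperp\yvval{\varphi}=\val{\varphi}$; this also certifies $V(p_i)=\val{p_i}\in\gpsi$, so the canonical valuation is a legitimate {\bf SUB}-model. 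The atomic and lattice cases are then immediate from the arithmetic of filters and ideals and the fact that $[\cdot]$ respects the operations: $[\top]=1\in x$ and $[\bot]=0\in y$ for all $x,y$, so $\val{\top}=X$ and $\yvval{\bot}=Y$; $[\varphi\wedge\psi]=[\varphi]\wedge[\psi]\in x$ iff $[\varphi],[\psi]\in x$ since $x$ is a filter, dually for $\vee$ with ideals; and $x\forces p_i$ iff $x\in V(p_i)$ by the definition of $V$.

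The technical core, and the step I expect to demand the most care, is a workable description of $R^{\partial 11}$ on the canonical frame. Starting from $xR^{111}zz'$ iff $\forall a,b\,(a\in z\wedge b\in z'\lra a\circ b\in x)$ (recorded in the proof of Theorem \ref{sub completeness}) and $R^{\partial 11}zz'=(R^{111}zz')\rperp$, I would first prove that $yR^{\partial 11}zz'$ holds iff there exist $a\in z$ and $b\in z'$ with $a\circ b\in y$. The substantive implication instantiates the defining universal quantifier at the filter $z\widehat{\circ}z'$ generated by $\{a\circ b\midsp a\in z,\ b\in z'\}$, extracts a witness $c\in(z\widehat{\circ}z')\cap y$, and then collapses a finite meet $\bigwedge_i(a_i\circ b_i)\leq c$ to a single $a\circ b\leq c$ with $a=\bigwedge_i a_i\in z$ and $b=\bigwedge_i b_i\in z'$, using that filters are closed under finite meets and that $\circ$ is monotone, finishing by downward closure of $y$. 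This is also the point at which the (im)properness of $z\widehat{\circ}z'$ must be tracked, which is why the canonical frame ranges over all lattice filters and ideals, with the improper ones serving as top and bottom.

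Granting this equivalence, the residuated cases reduce to residuation arithmetic in the Lindenbaum-Tarski algebra $\mathcal{L}$. For $\circ$: $y\dforces\varphi\circ\psi$ iff $[\varphi]\circ[\psi]\in y$, whereas the right-hand side of the clause unwinds to the demand that $a\circ b\in y$ for some $a\in z$, $b\in x$ whenever $[\varphi]\in z$ and $[\psi]\in x$; choosing $a=[\varphi]$, $b=[\psi]$ gives one direction, and instantiating at the principal filters $z=x_{[\varphi]}$, $x=x_{[\psi]}$ and invoking monotonicity of $\circ$ and downward closure of $y$ gives the converse. For $\varphi\ra\psi$: $x\forces\varphi\ra\psi$ iff $[\varphi]\ra[\psi]\in x$; the clause's right-hand side unwinds to ``for all $z\ni[\varphi]$ and all $y\ni[\psi]$ there are $a\in z$, $b\in x$ with $a\circ b\in y$'', which holds by $[\varphi]\circ([\varphi]\ra[\psi])\leq[\psi]$ one way, and, tested at $z=x_{[\varphi]}$ and $y=y_{[\psi]}$, produces $a\geq[\varphi]$ and $b\in x$ with $[\varphi]\circ b\leq a\circ b\leq[\psi]$, hence $b\leq[\varphi]\ra[\psi]$ and $[\varphi]\ra[\psi]\in x$ since $x$ is upward closed. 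The case of $\la$ is the mirror image, via $a\leq c\la b$ iff $a\circ b\leq c$, and this closes the induction.

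Alternatively one could route the three residuated cases through the identities $\val{\varphi\circ\psi}=\Gamma(x_{[\varphi]}\widehat{\circ}x_{[\psi]})$ and its residual analogues, using $x_a\widehat{\circ}x_b=x_{a\circ b}$ from \cite{dloa}, Lemma \ref{sub la ra}, and the fact that the closure of $\bigodot$ on stable sets is the $\sigma$-extension of $\circ$; but the direct computation above keeps the argument self-contained, and in any case the genuine difficulty is localized in the description of $R^{\partial 11}$ obtained in the second paragraph.
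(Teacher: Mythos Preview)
Your proof is correct and takes a genuinely different route from the paper's. The pivotal divergence is in how the residuated connectives are handled. You first isolate an explicit existential description of the derived relation, namely that $yR^{\partial 11}zz'$ holds iff there exist $a\in z$ and $b\in z'$ with $a\circ b\in y$; the paper never states this equivalence, working instead with the formulation $yR^{\partial 11}uv$ iff $u\widehat{\circ}v\upv y$. For $\circ$ the two computations then run in parallel, but for $\ra$ (and $\la$) the paper introduces a second point operator $x\onlra{\flat}y$ on filter--ideal pairs, appeals to the Gehrke--Harding result that residuation is preserved by canonical extensions to obtain the adjunction identity $x\widehat{\circ}z\upv y$ iff $z\upv(x\onlra{\flat}y)$, and only then unwinds the satisfaction clause. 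Your argument bypasses all of this: once $R^{\partial 11}$ is described existentially, the implication clauses reduce to a direct test at the principal filter $x_{[\varphi]}$ and principal ideal $y_{[\psi]}$, using nothing beyond the residuation law $a\circ b\leq c\Leftrightarrow b\leq a\ra c$ in $\mathcal{L}$ and up/downward closure of filters/ideals. What the paper's route buys is an explicit tie-in to the $\sigma$-extension machinery used elsewhere in the article (canonicity of residuation, the operators $\Ra_0,\Ra_1$); what your route buys is a shorter, fully self-contained argument with no external citations, and your observation that $\val{\varphi}=\Gamma x_{[\varphi]}$ immediately dispatches the formal-concept requirement without induction, which the paper leaves implicit.
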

\begin{proof}
The proof is by induction, the cases for $\top,\bot,\wedge,\vee$ are straightforward and we turn to the cases for the residuated operators.

The canonical accessibility relation $R^{111}$ has been defined by setting, for filters $x,u,v$, $xR^{111}uv$ iff $u\widehat{\circ}v\leq x$ (where we consistently use $\leq$ for filter inclusion). Hence $R^{111}uv=\{x\midsp u\widehat{\circ}v\leq x\}=\Gamma(u\widehat{\circ}v)$ is a Galois stable set, in fact a closed element of $\gpsi$. The canonical Galois relation $\upv$ is increasing in each argument place from which it follows that $yR^{\partial 11}uv$ iff $u\widehat{\circ}v\upv y$. It needs to be verified, given the co-satisfaction clause for $\circ$, that $y\in\yvval{\varphi\circ\psi}$ iff for any $u,v\in X$, if $u\in\val{\varphi}$ and $v\in\val{\psi}$, then $yR^{\partial 11}uv$. By definition of the canonical interpretation we have $y\in\yvval{\varphi\circ\psi}$  iff\ $[\varphi\circ\psi]\in y$  iff  $[\varphi]\circ[\psi]\in y$. For lattice elements $a,b$, given monotonicity of $\widehat{\circ}$ and increasingness of $\upv$ we obtain
\begin{tabbing}
$a\circ b\in y$\ \= iff\ \= $x_{a\circ b}\upv y$\ \= iff\ \= $x_a\widehat{\circ}x_b\upv y$\ \= iff \ \=
$\forall u,v\in X\;(x_a\leq u\;\wedge\;x_b\leq v\;\lra\; u\widehat{\circ}v\upv y)$\\
\>iff\> $\forall u,v\in X\;(a\in u\;\wedge\;b\in v\;\lra\;yR^{\partial 11}uv)$
\end{tabbing}
In particular, for $a=[\varphi], b=[\psi]$ the above yields precisely the co-satisfaction clause for cotenability:
$y\dforces\varphi\circ\psi\mbox{ iff }
 \forall u,v\in X\;(u\forces\varphi\;\wedge\; v\forces\psi\;\lra\; yR^{\partial 11} uv)$.

For the satisfaction clause for implication, it suffices to show that \mbox{$a\ra b\in z$} iff $\forall x\in X\;\forall y\in Y\;(a\in x\;\wedge\;b\in y\;\lra\; yR^{\partial 11}zx)$.

Define first a point operator $x\onlra{\flat}y$ as in \cite{dloa}, given a filter $x$ and an ideal $y$, to be the ideal generated by all implications $a\ra b$ with $a\in x, b\in y$, i.e. $x\onlra{\flat}y=\bigvee\{y_{a\ra b}\midsp a\in x, b\in y\}$.

Note that, it follows from the results of \cite{sdl-exp} that for the particular case of the normal lattice operator $\circ:\mathcal{L}\times\mathcal{L}\lra\mathcal{L}$, of distribution type $(1,1;1)$, the operator $\bigovert$ on $\gpsi$ defined by $A\bigovert C=\bigvee_{x\in A,z\in C}(\Gamma x\bigovert\Gamma z)=\bigvee_{x\in A,z\in C}\Gamma(x\widehat{\circ}z)$ is its  $\sigma$-extension and that $A\bigovert C=\lbbox\largediamond(A\bigodot C)$.

In \cite{sdl-exp} and more specifically in \cite{canonical} we have also constructed the  $\sigma$-extension $\Ra_1=\ra^\sigma$ of the lattice implication operator $\ra$ in our canonical lattice frame. This was done by first defining   $\Ra_0:\gpsi\times\gphi\lra\gphi$  by $\Gamma x\Ra_0\Gamma y=\Gamma(x\onlra{\flat}y)$. This is a monotone operator, completely distributing over arbitrary joins of closed elements in each argument place, hence it is extended to all of $\gpsi\times\gphi$ by setting $A\Ra_0 B=(\bigvee_{x\in A}\Gamma x)\Ra_0(\bigvee_{y\in B}\Gamma y)$ $=\bigvee_{x\in A,y\in B}(\Gamma x\Ra_0\Gamma y)$. Then
$\Ra_1:\gpsi\times\gpsi^\partial\lra\gpsi^\partial$ was defined in \cite{canonical} by setting $A\Ra_1 C=\lperp(A\Ra_0 C\rperp)$. In particular, $\Gamma x\Ra_1\lperp\{y\}=\lperp(\Gamma x\Ra_0\Gamma y)= \lperp\{x\onlra{\flat}y\}$.

By results of \cite{mai-harding}, Proposition 6.6, it follows that residuation is preserved by canonical extensions, hence for stable sets $A,C,F$ it holds that $A\bigovert F\subseteq C$ iff $C\subseteq A\Ra_1 C$. In particular then, $\Gamma x\bigovert\Gamma z\subseteq\lperp\{y\}$ iff $\Gamma z\subseteq\Gamma x\Ra_1\lperp\{y\}$. This is equivalent, given the definitions of the stable set operators, to $x\widehat{\circ}z\upv y$ iff $z\upv x\onlra{\flat}y$.
Given the definition of $R^{\partial 11}$, we obtain
\begin{equation}
yR^{\partial 11}xz\;\mbox{ iff }\; x\widehat{\circ}z\upv y \;\mbox{ iff }\;z\upv x\onlra{\flat}y\label{canonical relation and upv}
\end{equation}
Furthermore, using the above we obtain
\begin{tabbing}
$a\ra b\in x$\ \=\ iff\ \=$x\upv y_{a\ra b}$ iff $x\upv (x_a\onlra{\flat}y_b)$\\
\>iff\> $\forall z\in X\;\forall y\in Y\;(x_a\leq z\;\wedge\;y_b\leq y\;\lra\;x\upv (z\onlra{\flat}y))$\\
\>iff\> $\forall z\in X\;\forall y\in Y\;(a\in z\;\wedge\;b\in y\;\lra\;x\upv (z\onlra{\flat}y))$\\
\>iff\> $\forall z\in X\;\forall y\in Y\;(a\in z\;\wedge\;b\in y\;\lra\;z\widehat{\circ}x\upv y)$ \ \ using \eqref{canonical relation and upv}\\
\>iff\> $\forall z\in X\;\forall y\in Y\;(a\in z\;\wedge\;b\in y\;\lra\;yR^{\partial 11}zx)$
\end{tabbing}

By a completely symmetric argument and with $y\stackrel{\flat}{\lla} z$ defined as the join $\bigvee\{y_{a\circ b}\midsp a\in z, b\in y\}$ we similarly obtain that  $x\widehat{\circ}z\upv y$ iff $x\upv y\stackrel{\flat}{\lla} z$, from which it follows by the same argument as above that $b\la a\in x$ iff $\forall z\in x$ $\forall y\in Y\;(a\in z\;\wedge\; b\in y\;\lra\; yR^{\partial 11}xz)$ and this completes the proof of the canonical interpretation lemma.
\end{proof}

We may then conclude.
\begin{thm}[SUB Soundness and Completeness]\rm
Each substructural logic {\bf SUB} of Figure \ref{sub-hierarchy} is sound and complete in the respective $\mathbb{SUB}$ frame-class.\telos
\end{thm}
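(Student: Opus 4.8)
The plan is simply to package together the two directions already established. Soundness is Theorem~\ref{sub soundness}: for each system {\bf SUB} of Figure~\ref{sub-hierarchy} we have checked that the complete lattice $\gpsi$ of Galois stable sets of an arbitrary $\mathbb{SUB}$-frame, equipped with the operator $\bigovert$ and its residuals $\La,\Ra$, lies in the equational class of the Lindenbaum--Tarski algebra of {\bf SUB}; the correlation between the frame constraints (C1)--(C4) of Table~\ref{constraints} and associativity, commutativity, thinning and contractiveness of $\bigovert$ is exactly Lemmas~\ref{C and odot} and~\ref{odot2overt}. Hence $\varphi\proves\psi$ in {\bf SUB} entails $\val{\varphi}\subseteq\val{\psi}$ in every {\bf SUB}-model over a $\mathbb{SUB}$-frame, i.e. $\varphi\forces\psi$ in $\mathbb{SUB}$.

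For the converse I would run the canonical model argument of Theorem~\ref{sub completeness}. Take the Lindenbaum--Tarski algebra $\mathcal{L}$ of {\bf SUB} and the Hartonas--Dunn canonical lattice frame $\mathfrak{F}=(\filt(\mathcal{L}),\upv,\idl(\mathcal{L}))$, with $x\upv y$ iff $x\cap y\neq\emptyset$ and with the canonical ternary relation given through the point operator by $xR^{111}zz'$ iff $z\,\widehat{\circ}\,z'\leq x$. Lemma~\ref{sub frame lemma} supplies $\mathfrak{F}\in\mathbb{SUB}$, and Lemma~\ref{sub int lemma} shows that the canonical (co)interpretation $\val{\varphi}=\{x\in\filt(\mathcal{L})\midsp[\varphi]\in x\}$, $\yvval{\varphi}=\{y\in\idl(\mathcal{L})\midsp[\varphi]\in y\}$ satisfies the recursive clauses of Table~\ref{sat}, so $(\mathfrak{F},\overline{V})$ is a genuine {\bf SUB}-model lying over a $\mathbb{SUB}$-frame. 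Now suppose $\varphi\not\proves\psi$, i.e. $[\varphi]\not\leq[\psi]$ in $\mathcal{L}$. Then the principal filter $x_{[\varphi]}=[\varphi]\!\uparrow$ contains $[\varphi]$ but omits $[\psi]$, so $x_{[\varphi]}\in\val{\varphi}\setminus\val{\psi}$, whence $\val{\varphi}\not\subseteq\val{\psi}$ in the canonical model and therefore $\varphi\not\forces\psi$ in $\mathbb{SUB}$. Together with soundness this yields $\varphi\proves\psi$ iff $\varphi\forces\psi$ over $\mathbb{SUB}$, which is the assertion.

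I expect no genuine obstruction at this stage: the separating filter $x_{[\varphi]}$ is automatically proper (it omits $[\psi]$, $\mathcal{L}$ being non-trivial), and $\val{\varphi}=\Gamma x_{[\varphi]}$ is a closed, hence stable, element of $\gpsi$, so $\overline{V}$ is well defined. The substantive work — matching the frame constraints with the algebraic properties of $\bigovert$, and reading the residuated connectives $\circ,\ra,\la$ off the canonical relation via the point operators $\widehat{\circ}$ and $\onlra{\flat}$ together with their $\sigma$-extensions — was precisely the content of the preceding lemmas, and that is where the difficulty is concentrated; the present theorem is then just their conjunction.
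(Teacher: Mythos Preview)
Your proposal is correct and matches the paper's approach exactly: in the paper this theorem is stated without proof (it ends with the \telos\ marker), being nothing more than the conjunction of Theorem~\ref{sub soundness} with Theorem~\ref{sub completeness} and its supporting Lemmas~\ref{sub frame lemma} and~\ref{sub int lemma}. Your write-up simply makes explicit the separating-filter step that the paper leaves implicit in the canonical-model argument.
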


For completeness of the modal companion logics, recall that to show that if $\alpha$ (resp. $\beta$) is logically valid, i.e. $\models\alpha$ (resp. $\models\beta$), then $\alpha$ is a theorem, i.e. $\proves\alpha$ (resp. $\vproves\beta$), one proceeds contrapositively arguing that for every non-theorem $\alpha$ (resp. $\beta$) a model can be found such that $\alpha$ (resp. $\beta$) is not valid in that model. The well established completeness argument proceeds by demonstrating that there is in fact a single model that invalidates all non-theorems of the logic, the so-called canonical model $\mathfrak{M}_c=(\mathfrak{F}_c,\val{\;}_c)$ of the logic, where $\mathfrak{F}_c=(W,\upv_w,W')$ is the canonical (sorted, in our case) frame and $\val{\zeta}_c=\{w\in W\midsp [\zeta]\in w\}$, while $\yvval{\eta}=\largesquare(W\setminus\val{\lbbox\neg\eta}_c)$ and where $[\zeta]$ is the equivalence class, under provability, of $\zeta$.

Algebraically, the canonical frame construction is a realization of a canonical extension of the Lindenbaum-Tarski algebra of the logic.
The standard (ultrafilter) canonical frame of a modal companion logic {\bf sub.ML}$_2$ is constructed from ultrafilters, as detailed  for {\bf ML}$_2$. The canonical accessibility relation $R^{111}$ is defined on utrafilters by the condition $xR^{111}zz'$ iff for all $a\in z$ and $a'\in z'$ we have $a\circ a'\in x$. We will however have use, for proof purposes, of an alternative canonical extension which regards the Boolean algebra merely as a lattice. By the results of Section \ref{canext section} the two canonical extension constructions are isomorphic, a fact that we use in the proofs of the canonical frame and interpretation lemmas.

\begin{lemma}[Canonical sub.ML$_2$-Frame Lemma]\rm
\label{subML frame lemma}
The canonical frame of {\bf sub.ML}$_2$ is a $\mathbb{SUB}$-frame.
\end{lemma}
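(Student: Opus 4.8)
The plan is to mirror, in the Boolean / ultrafilter setting, the proof of the Canonical SUB-Frame Lemma (Lemma~\ref{sub frame lemma}). Recall that the canonical frame of {\bf sub.ML}$_2$ is a realization of the canonical extension of the Lindenbaum--Tarski algebra $(\mathcal{A},\mathcal{B},\Diamond,\bbox)$: its first sort is $X=\ufilt(\mathcal{A})$, its Galois relation $\upv$ is the complement of the canonical relation $I\subseteq X\times\ufilt(\mathcal{B})$ (with $xIy$ iff $\bbox b\in x$ implies $b\in y$, for all $b\in\mathcal{B}$), the underlying two-sorted frame satisfies \eqref{D-cond} exactly as in the completeness proof of {\bf ML}$_2$, and the ternary relation is given by $xR^{111}zz'$ iff $a\circ a'\in x$ for all $a\in z$, $a'\in z'$, where $\circ$ is the operator of the residuated Boolean algebra $\mathcal{A}$. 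Exactly as in Lemma~\ref{sub frame lemma}, introduce the point operator $z\widehat{\circ}z'=\bigvee\{x_{a\circ a'}\midsp a\in z,\ a'\in z'\}$ on filters of $\mathcal{A}$ --- well defined by \cite{dloa}, since $\circ$ is a normal operator of distribution type $(1,1;1)$ on $\mathcal{A}$ viewed as a lattice, hence preserves principal filters and distributes over joins of these in each argument --- so that $xR^{111}zz'$ iff $z\widehat{\circ}z'\leq x$. By the results of Section~\ref{canext section} (Lemma~\ref{eta morphism} and its $n$-ary extension), the ultrafilter canonical extension of $\mathcal{A}$ is isomorphic, as an expansion, to the Hartonas--Dunn lattice-filter canonical extension, so the combinatorics may be run in whichever of the two canonical frames is more convenient. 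There is nothing to prove for {\bf nfl.ML}$_2$.

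For {\bf fl.ML}$_2$, {\bf bci.ML}$_2$ and {\bf bcw.ML}$_2$ the verification is word-for-word that of the corresponding paragraphs of Lemma~\ref{sub frame lemma}, now carried out for the operator $\circ$ of $\mathcal{A}$: the associativity axiom of {\bf fl.ML}$_2$ makes $\circ$, hence (by the join computation of that lemma) $\widehat{\circ}$, associative, which yields (C1); the commutativity axiom of {\bf bci.ML}$_2$ makes $\circ$, hence $\widehat{\circ}$, commutative, which yields (C2); and the contraction axiom $\alpha\wedge\alpha'\proves\alpha\odot\alpha'$ of {\bf bcw.ML}$_2$ gives $a\wedge a'\leq a\circ a'$ in $\mathcal{A}$, so that for any ultrafilter $x$ and $a,a'\in x$ one has $a\circ a'\in x$ (since $a\wedge a'\in x$), i.e. $x\widehat{\circ}x\leq x$, i.e. $xR^{111}xx$, which is (C4). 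Only (C1) involves an existential quantifier over points; this is precisely where Section~\ref{canext section} earns its keep, since in the lattice-filter canonical frame one may take the required witness to be the filter $v\widehat{\circ}w$ itself (as in Lemma~\ref{sub frame lemma}) and then transport (C1) back to the ultrafilter frame along the isomorphism. Conceptually, the same conclusions follow because associativity, commutativity and contraction are Sahlqvist equations for the BAO-reduct $(\mathcal{A},\circ)$, hence canonical, so they hold in the complex algebra of the canonical frame, whence (C1), (C2), (C4) by Lemma~\ref{C and odot}.

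The only genuinely new case is {\bf bck.ML}$_2$, whose extra axiom, the controlled weakening $\lbbox\beta\odot\lbbox\beta'\proves\lbbox\beta'$, is not a pure $\odot$-equation and falls outside the template of Lemma~\ref{sub frame lemma}; one argues (C3) directly, which is consistent with the fact that in Lemma~\ref{C and odot} condition (C3) was only seen to force thinning of $\bigodot$ on $\preceq$-increasing --- in particular, on $\lbbox$-definable --- sets. Algebraically the axiom reads $\bbox b\circ\bbox b'\leq\bbox b'$ for all $b,b'\in\mathcal{B}$, and from $\top\proves\lbbox\top$ we have $\bbox 1_{\mathcal{B}}=1_{\mathcal{A}}$, hence $1_{\mathcal{A}}\circ\bbox b\leq\bbox b$ for every $b\in\mathcal{B}$. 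Unfolding definitions in the canonical frame --- and using that a proper Boolean filter is the intersection of the ultrafilters above it --- one gets that $z'\preceq x$ is equivalent to $\{b\in\mathcal{B}\midsp\bbox b\in z'\}\subseteq\{b\in\mathcal{B}\midsp\bbox b\in x\}$. So if $xR^{111}zz'$ and $\bbox b\in z'$, then, since $1_{\mathcal{A}}\in z$, we obtain $1_{\mathcal{A}}\circ\bbox b\in x$, whence $\bbox b\in x$ by $1_{\mathcal{A}}\circ\bbox b\leq\bbox b$; thus $z'\preceq x$, i.e. (C3) holds.

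Matching these against Definition~\ref{frame classes}: the canonical frame of {\bf fl.ML}$_2$ satisfies (C1), that of {\bf bci.ML}$_2$ satisfies (C1) and (C2), that of {\bf bcw.ML}$_2$ satisfies (C1), (C2) and (C4), and that of {\bf bck.ML}$_2$ satisfies (C1), (C2) and (C3), so in each case the canonical frame belongs to the corresponding class $\mathbb{SUB}$. The main obstacle is the (C1) case: the point-operator computation is formally that of Lemma~\ref{sub frame lemma}, but the existential quantifier means one cannot argue naively on ultrafilters and must either pass through the lattice-filter canonical frame (using the isomorphism of Section~\ref{canext section}) or invoke canonicity of associativity; the {\bf bck.ML}$_2$ case is the other delicate point, requiring the observation that the controlled-weakening axiom is calibrated, through $\bbox 1=1$, to force (C3).
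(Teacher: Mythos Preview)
Your argument is correct and, for {\bf nfl.ML}$_2$, {\bf fl.ML}$_2$, {\bf bci.ML}$_2$ and {\bf bcw.ML}$_2$, essentially coincides with the paper's: both establish (C1) by appealing to canonicity of associativity (you phrase it via the point operator $\widehat{\circ}$ and the isomorphism of Section~\ref{canext section}, the paper via the fact that $\bigodot$ on ultrafilter sets is the $\sigma$-extension of $\circ$ and hence associative, then reads off (C1) from the singleton instance), and both handle (C2) and (C4) by the obvious one-line computations on ultrafilters.

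The genuine divergence is {\bf bck.ML}$_2$. The paper argues that from $xR^{111}zz'$ one gets $z'\subseteq x$ (using $a\circ b\leq b$ for arbitrary $a,b$) and then, by maximality, $z'=x$, whence $z'\preceq x$ by reflexivity. Your route is different and more faithful to the controlled weakening axiom as stated: you first characterize $z'\preceq x$ in the canonical frame as the inclusion $\{b\midsp\bbox b\in z'\}\subseteq\{b\midsp\bbox b\in x\}$ (a correct unpacking, using that $\{b\midsp\bbox b\in x\}$ is a proper filter and that a proper filter is the intersection of the ultrafilters above it), and then derive this inclusion using only the instance $1_{\mathcal A}\circ\bbox b=\bbox 1\circ\bbox b\leq\bbox b$ of the axiom together with $1_{\mathcal A}\in z$. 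This buys you a proof that does not rely on unrestricted weakening $a\circ b\leq b$ in $\mathcal{A}$, which the controlled axiom $\lbbox\beta\odot\lbbox\beta'\proves\lbbox\beta'$ by itself does not deliver; the paper's shortcut tacitly assumes the stronger inequality. Both arguments reach (C3), but yours does so with strictly the hypotheses available.
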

\begin{proof}
There is nothing to prove for the system {\bf nfl.ML}$_2$ (the canonical relation $R^{111}$ generates residuated operators as detailed in Section \ref{operators on sets section}) and we discuss each other system separately.\\
\paragraph{fl.ML$_2$}
This case (associativity relational constraint) has been identified as a non-elementary case and resolved by logicians working on Relevance Logic \cite{relevance1,relevance2}  and we could simply refer to the related results (see, for example, a detailed proof in \cite{relevance3}). However, a simple solution can be now given using the notion of a canonical extension and related results and we report it below. Note first that, algebraically, the canonical frame is a realization of the construction of a canonical extension of the Lindenbaum-Tarski algebra of the logic (a residuated Boolean algebra, considering only the first sort, where the associative operation $\circ$ is defined). For distributive lattices the extension is based on the set of prime filters (as is the case for relevance logic) and for the Boolean case ultrafilters are considered, as it is well known. For arbitrary lattices all filters are used \cite{sdl}. We already mentioned (and proved) in Section \ref{sub completeness} that the associativity equation $a\circ (b\circ c)=(a\circ b)\circ c$ is canonical (it is preserved by canonical extensions). Furthermore, the set operator (on sets of ultrafilters) defined by
\[
U_1\mbox{$\bigodot$} U_2=\{u\in\ufilt(\mathcal{A})\midsp \exists u_1,u_2\in\ufilt(\mathcal{A})\;(u_1\in U_1\;\wedge\;u_2\in U_2\;\wedge\; uRu_1u_2)\}
\]
is the $\sigma$-extension of $\circ$ (by the results of J\'{o}nsson and Tarski on Boolean algebras with operators \cite{jt1,jt2}), hence associative, where the canonical accessibility relation $R$ is defined (as for relevance logic) by
\[
uRu_1u_2\;\mbox{ iff }\;\forall a,b\;(a\in u_1\;\wedge\;b\in u_2\;\lra\;a\circ b\in u)
\]
It is furthermore elementary to see that the associativity relational constraint (C1) is equivalent to the identity $\{u\}\bigodot(\{v\}\bigodot\{w\})=(\{u\}\bigodot\{v\})\bigodot\{w\}$, for any $u,v,w\in\ufilt(\mathcal{A})$. Hence (C1) holds in the canonical {\bf fl.ML}$_2$ frame.\\

\paragraph{bci.ML$_2$}
Straightforward, since assuming $xR^{111}zz'$, if $a\in z,b\in z'$, then $a\circ b\in x$, hence since commutativity is assumed in this case we also get $b\circ a\in x$ from which $xR^{111}z'z$ follows, by definition of the relation.
\\

\paragraph{bck.ML$_2$}
Let $x,z,z'$ be ultrafilters such that $xR^{111}zz'$ holds. Fix $a\in z$, arbitrary. For any $b\in z'$, $a\circ b\in x$ and since $a\circ b\leq b$ holds in the Lindenbaum-Traksi algebra of the logic and $x$ is a filter, we obtain that $b\in x$. It follows that $z'\subseteq x$, which implies that $z'=x$, since they are both maximal filters. Since $\preceq$ is a preorder, hence reflexive, it follows from $x\preceq x$ that $z'\preceq x$. Hence the relational constraint (C3) holds in the canonical frame.
\\

\paragraph{bcw.ML$_2$}
The case is straightforward since, given any ultrafilter $x$, for any lattice elements $a,b$, if both $a,b\in x$, then $a\wedge b\in x$ and since contraction is assumed in {\bf bcw.ML}$_2$ we get $a\circ b\in x$ from $a\wedge b\leq a\circ b$. Hence (C4) holds in the canonical frame.
\end{proof}

\begin{lemma}[Canonical sub.ML$_2$-Interpretation Lemma]\rm
\label{subML int lemma}
The canonical interpretation of {\bf sub.ML}$_2$ satisfies the recursive clauses of Table \ref{sat modal}.
\end{lemma}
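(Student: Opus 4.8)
The plan is to prove the \emph{truth lemma} for the canonical model by an induction on the structure of sentences that intertwines the two sorts: show $\val{\alpha}_c=\{x\in X\midsp[\alpha]\in x\}$ for every first-sort $\alpha$ and $\yvval{\beta}_c=\{y\in Y\midsp[\beta]\in y\}$ for every second-sort $\beta$, where $X=\ufilt(\mathcal{A})$ and $Y=\ufilt(\mathcal{B})$ for the two Boolean components $\mathcal{A},\mathcal{B}$ of the Lindenbaum--Tarski algebra, and where $\val{\cdot}_c,\yvval{\cdot}_c$ are the assignments obtained by the recursive clauses of Table~\ref{sat modal} (the point being precisely that these coincide with the direct definitions). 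Verifying the clauses then reduces to this identity together with the algebraic rewrites already recorded after Table~\ref{sat modal}, namely $\val{\alpha\odot\alpha'}_c=\val{\alpha}_c\bigodot\val{\alpha'}_c$, $\val{\alpha\rfspoon\alpha'}_c=\val{\alpha}_c\tilde{\Ra}\val{\alpha'}_c$ and $\val{\alpha'\lfspoon\alpha}_c=\val{\alpha'}_c\tilde{\La}\val{\alpha}_c$. The base cases (propositional variables of each sort) hold by definition of the canonical interpretation, and the cases for $\neg,\wedge$ (hence for the defined $\vee,\ra,\top,\bot$) are routine since we are working over the ultrafilter spaces of Boolean algebras. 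As part of this step I would also check, using the canonical $I$-relation and the D- and bridge-axioms, that the derived second-sort interpretation $\yvval{\eta}_c=\largesquare(W\setminus\val{\lbbox\neg\eta}_c)$ equals $\{y\midsp[\eta]\in y\}$; this is the second-sort analogue of the truth lemma and proceeds exactly as in the completeness argument for $\textbf{ML}_2$.

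For the modal box constructs I would argue as in the standard existence lemma. One direction is immediate from the definition of $I$ (recall $xIy$ iff $\lbbox b\in x\Rightarrow b\in y$ for all $b\in\mathcal{B}$, equivalently, by residuation, the form through $\largediamond$): it gives $x\in\val{\lbbox\beta}_c\Rightarrow(\forall y)(xIy\lra y\in\yvval{\beta}_c)$ via the induction hypothesis. For the converse, if $[\lbbox\beta]\notin x$ then the set $\{b\midsp\lbbox b\in x\}\cup\{\neg\beta\}$ generates a proper filter of $\mathcal{B}$ --- otherwise a finite meet $b_1\wedge\cdots\wedge b_n\leq\beta$ with every $\lbbox b_i\in x$ would, by monotonicity and the bridge rule $\lbbox\beta\wedge\lbbox\beta'\proves\lbbox(\beta\wedge\beta')$, force $\lbbox\beta\in x$ --- so it extends to an ultrafilter $y$ with $xIy$ and $\beta\notin y$. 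The case of $\largesquare\alpha$ is symmetric, using the characterisation of $I$ through $\largesquare$ and the bridge rule $\largesquare\alpha\wedge\largesquare\alpha'\vproves\largesquare(\alpha\wedge\alpha')$; the same computation pins down the identity for $\yvval{\eta}_c$ noted above.

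The crux is the fusion clause $\val{\alpha\odot\alpha'}_c=\val{\alpha}_c\bigodot\val{\alpha'}_c$, where $\bigodot$ is the binary diamond of the canonical $R^{111}$, with $xR^{111}zz'$ iff $a\circ a'\in x$ for all $a\in z,\,a'\in z'$. The inclusion $\supseteq$ is immediate from this definition together with monotonicity of $\circ$. For $\subseteq$: given $[\alpha]\circ[\alpha']=[\alpha\odot\alpha']\in x$, construct ultrafilters $z\supseteq{\uparrow}[\alpha]$ and $z'\supseteq{\uparrow}[\alpha']$ of $\mathcal{A}$ by an interleaved (pair/square) extension, maintaining at every finite stage the invariant $a\circ a'\in x$ for all $a$ already placed in $z$ and $a'$ already placed in $z'$; at each step one may adjoin $a_n$ or $\neg a_n$ to either side without breaking the invariant, because $\circ$ is additive in each argument and $x$, being an ultrafilter, is prime (and $\bot\circ\top=\bot\notin x$ keeps both filters proper). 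At the limit $z,z'$ witness $x\in\val{\alpha}_c\bigodot\val{\alpha'}_c$ via the induction hypothesis. This simultaneous extension is the familiar non-elementary step from completeness proofs for relevance and arrow logics, and it is the main obstacle in the lemma.

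Finally, the residual clauses. For $\rfspoon$ one inclusion is purely equational: if $[\alpha\rfspoon\alpha']\in x$, $z\in\val{\alpha}_c$ and $z'R^{111}zx$, then $[\alpha]\circ[\alpha\rfspoon\alpha']\in z'$ by the definition of $R^{111}$, while $[\alpha]\circ[\alpha\rfspoon\alpha']\leq[\alpha']$ by the residuation axiom $\alpha\odot(\alpha\rfspoon\alpha')\proves\alpha'$, so $[\alpha']\in z'$; hence $\val{\alpha\rfspoon\alpha'}_c\subseteq\val{\alpha}_c\tilde{\Ra}\val{\alpha'}_c$, with Lemma~\ref{modal residuals lemma} supplying the right-hand side. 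The reverse inclusion can be obtained by a dual pair-extension argument, but more economically one observes that $\bigodot$ is exactly the canonical ($\sigma$-)extension of $\circ$ over $\mathcal{A}_\delta=\powerset(\ufilt(\mathcal{A}))$ --- equivalently, through the isomorphism $\eta$ of Lemma~\ref{eta morphism}, over $\mathcal{G}(\filt(\mathcal{A}))$, matching the computations used in Lemma~\ref{sub int lemma} --- and that canonical extensions preserve residuation (\cite{mai-harding}), so $\tilde{\La},\tilde{\Ra}$ are the canonical extensions of $\lfspoon,\rfspoon$ and $\val{\alpha\rfspoon\alpha'}_c=\val{\alpha}_c\tilde{\Ra}\val{\alpha'}_c$; the case of $\lfspoon$ is symmetric, using $(\alpha'\lfspoon\alpha)\odot\alpha\proves\alpha'$. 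Assembling the cases completes the induction and hence the lemma.
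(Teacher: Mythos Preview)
Your proposal is correct and complete. The structure---truth lemma by mutual induction on the two sorts, standard existence-lemma argument for the boxes, and residuals derived from the fusion case via residuation---matches the paper's, but you diverge at the key step, the $\odot$ case.

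You handle $\odot$ by the direct Routley--Meyer pair-extension argument: starting from the principal filters $\uparrow[\alpha]$ and $\uparrow[\alpha']$ and interleaving decisions $a_n$ versus $\neg a_n$, using additivity of $\circ$ and primeness of the ultrafilter $x$ to maintain the invariant. The paper explicitly acknowledges that this classical route is available and works, but elects instead to bypass the combinatorics by invoking the canonical-extension machinery already set up in Section~\ref{canext section}: it regards the Boolean algebra $\mathcal{A}$ as a mere lattice, passes to its filter-based canonical extension $\mathcal{G}(\filt(\mathcal{A}))$, and uses the isomorphism $\eta$ of Lemma~\ref{eta morphism} to transport the identity $\Gamma x\circ^\sigma\Gamma z=\Gamma(x\,\widehat{\circ}\,z)$ over to $\powerset(\ufilt(\mathcal{A}))$, obtaining the existence of the witnessing ultrafilters $w,w'$ from the equality $\eta(\Gamma x_a\circ^\sigma\Gamma x_{a'})=(\Gamma^u x_a)\circ^\sigma_u(\Gamma^u x_{a'})$ without any ad hoc construction. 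Your approach is more elementary and self-contained; the paper's is more structural and dovetails with its running theme that the classical and non-distributive canonical frames are tied together by $\eta$. Interestingly, you do invoke exactly this $\eta$-based reasoning for the residual clauses, so you already have the ingredients to treat $\odot$ the paper's way as well; conversely, the paper disposes of $\lfspoon,\rfspoon$ in a single line by appeal to Lemma~\ref{modal residuals lemma} once $\odot$ is in hand, which is a bit terser than your two-pronged treatment but relies on the same underlying preservation-of-residuation fact.
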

\begin{proof}
For each sort of sentences, the canonical interpretation $\val{\;}_c,\yvval{\;}_c$ assigns to a sentence the set of respective ultrafilters containing its equivalence class, as usual. The proof is by induction and we skip the familiar cases corresponding to the Boolean operators $\neg,\wedge$ (for each sort). For the needs of the proof we let $a=[\alpha]$, primed or with subscripts, and $b=[\beta]$,  again possibly with subscripts or primed.\\

\paragraph{Case $\largesquare\alpha,\lbbox\beta$}\mbox{}\\
Both cases are immediate by the way the canonical frame relation $I$ is defined.\\

\paragraph{Case $\odot$}\mbox{}\\
The current proof obligation is known in the relevance logic literature and it has been handled by Routley and Meyer \cite{routley-meyer} (see also \cite{relevance2,relevance3}) with a maximalization argument using Zorn's lemma (delivering prime filters, which are maximal filters in our Boolean algebra case) and we could simply direct the reader to that proof argument. Instead, we provide a proof that makes use of the uniqueness, up to isomorphism, of canonical extensions of lattice expansions, see Lemma \ref{eta morphism}.

Recall from the results of J\'{o}nsson and Tarski \cite{jt1,jt2} that the additive, binary operator $\circ$ on the Boolean algebra $\mathcal{A}$ is represented (in the ultrafilter canonical extension) relationally as the operator
\[
U\medcircle U'=\{u\in\ufilt(\mathcal{A})\midsp \exists w,w'\in\ufilt(\mathcal{A})\;(uRww'\wedge w\in U\wedge w'\in U')\}
\]
where $R$ is defined by
\[
uRww'\;\mbox{ iff }\;\forall a,a'\in\mathcal{A}\;(a\in w\;\wedge\;a'\in w'\;\lra\;a\circ a'\in u)
\]
Furthermore,  $\medcircle=\circ^\sigma_u$ is the $\sigma$-extension (in the terminology of \cite{mai-harding}) of the cotenability operator $\circ$ and it is a completely additive operator, i.e. $U\circ^\sigma_u U'=\bigcup_{w\in U, w'\in U'}\{w\}\circ^\sigma_u\{w'\}$. Note also that $uRww'$ iff $u\in\{w\}\circ^\sigma_u\{w'\}$.

Now regard the Boolean algebra merely as a lattice and take its canonical extension as the complete lattice of Galois stable sets $\gpsi$, where $X=\filt(\mathcal{A})$. Let $X\supseteq X^u=\ufilt(\mathcal{A})$. As shown in Section \ref{canext section}, the isomorphism (by uniqueness of canonical extensions of lattices) between $\gpsi$ and $\powerset(X^u)$ sends a stable set $A\in\gpsi$ to $A^u=A\cap X^u$. The cotenability operator $\circ$ is now represented as an operator on Galois stable sets as in the proof of Lemma \ref{sub frame lemma}, by first defining a point operator $\widehat{\circ}$ (on filters), setting $z\widehat{\circ}z'=\bigvee\{x_{a\circ a'}\midsp a\in z, a'\in z'\}$, then defining $R^{111}\subseteq X^3$ by $xR^{111}zz'$ iff $z\widehat{\circ}z'\leq x$, which is equivalent to the definition $xR^{111}zz'$ iff $\forall a,a'\in\mathcal{A}\;(a\in z\;\wedge\;a'\in z'\;\lra\;a\circ a'\in x)$. Note that the restriction $R=R^{111}\cap (X^u)^3$ on the set of ultrafilters is precisely the canonical accessibility relation $R$ in the standard canonical (ultrafilter) extension of the Boolean algebra with an additive operator $\circ$. By the analysis in Section \ref{canext section}, $\eta(\Gamma x\circ^\sigma\Gamma z)=\eta(\Gamma(x\widehat{\circ}z))=\{u\in X^u\midsp x\widehat{\circ}z\leq u\}$ which is precisely the set $\{u\in X^u\midsp uR^{111}xz\}$. By Lemma \ref{eta morphism}, $\eta(\Gamma x\circ^\sigma\Gamma z)=\eta(\Gamma x)\circ^\sigma_u\eta(\Gamma z)=(\Gamma^ux)\circ^\sigma_u(\Gamma^uz)$, using the notation introduced in the above mentioned lemma. By complete additivity of $\circ^\sigma_u$ the latter is the union $\bigcup_{x\leq w, z\leq w'}(\{w\}\circ^\sigma_u\{w'\})$. Hence we have obtained that, for an ultrafilter $u$ and any filters $x,z$,
\begin{tabbing}
$uR^{111}xz$\hskip6mm \= iff \ \= $\exists w,w'\in X^u\;(x\leq w\;\wedge\;z\leq w'\;\wedge\;u\in(\{w\}\circ^\sigma_u\{w'\})$\\
\>iff\> $\exists w,w'\in X^u\;(x\leq w\;\wedge\;z\leq w'\;\wedge\;uRww')$
\end{tabbing}

In particular, given elements $a,a'$ of the Boolean algebra,
\begin{equation}
uR^{111}x_ax_{a'} \;\mbox{ iff }\; \exists w,w'\in X^u\;(a\in w\;\wedge\;a'\in w'\;\wedge\;uRww')
\label{r111-r}
\end{equation}

Now let $a=[\alpha], a'=[\alpha']$ be the respective equivalence classes of $\alpha,\alpha'$. The canonical interpretation $\val{\alpha\odot\alpha'}_c$ is the representation of the equivalence class $[\alpha\odot\alpha']=[\alpha]\circ[\alpha']$, i.e. $\val{\alpha\odot\alpha'}_c=h(a\circ a')$, where $h$ is the Stone representation map $h(e)=\{u\in X^u=\ufilt(\mathcal{A})\midsp e\in u\}$.
Assume for an ultrafilter $u$ that $u\in\val{\alpha\odot\alpha'}_c$, so that $a\circ a'\in u$. It needs to be shown that there exist ultrfilters $w,w'$ such that $a\in w, a'\in w'$ and $uRww'$. But $a\in x_a, a'\in x_{a'}$ and if $a\circ a'\in u$, then $x_a\widehat{\circ}x_{a'}=x_{a\circ a'}\leq u$, i.e. $uR^{111}x_ax_{a'}$ holds. This is equivalent, by \eqref{r111-r}, to the assertion that ultrafilters $w,w'$ exist with $a\in w, a'\in w'$ and where $uRww'$. This shows that for an ultrafilter $u$,  \[a\circ a'\in u\;\mbox{ iff }\;\exists w,w'\in X^u\;(a\in w\;\wedge\;a'\in w'\;\wedge\;uRww')\] hence the canonical interpretation conforms to the relevant clause in the definition of the satisfaction relation of Table \ref{sat modal}.
\\

\paragraph{Case $\rfspoon$, $\lfspoon$}\mbox{}\\
The cases of $\lfspoon,\rfspoon$ follow from the case for $\odot$, given Lemma \ref{modal residuals lemma} and hence the proof is complete.
\end{proof}

Hence we may conclude
\begin{thm}[Completeness for sub.ML$_2$]\rm Each modal companion logic {\bf sub.ML}$_2$ is (sound, by Theorem \ref{subML2 soundness}) and complete in the respective $\mathbb{SUB}$ frame-class.\telos
\end{thm}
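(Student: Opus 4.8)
Soundness is Theorem~\ref{subML2 soundness}, so the plan is to establish completeness by the familiar canonical-model argument, whose two non-routine ingredients have already been isolated and proved as Lemma~\ref{subML frame lemma} and Lemma~\ref{subML int lemma}. Fix a modal companion logic {\bf sub.ML}$_2$ and argue contrapositively: given a first-sort sentence $\alpha$ with $\not\proves\alpha$ (a second-sort non-theorem $\beta$ being handled symmetrically, with $\vproves$ for $\proves$ and $W'$ for $W$ throughout), I need a $\mathbb{SUB}$-model falsifying $\alpha$. Pass to the Lindenbaum--Tarski algebra, which by the axiomatization of Section~\ref{operators on sets section} is a two-sorted residuated modal algebra $\Diamond:\mathcal{A}\leftrightarrows\mathcal{B}:\bbox$ with $(\mathcal{A},\lfspoon,\circ,\rfspoon)$ a residuated Boolean algebra (associative, commutative or contractive according to the extra axioms present) satisfying the D-axioms $\bbox b\leq\blackdiamond b$ and $\Box a\leq\Diamond a$. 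From $\not\proves\alpha$ we get $[\alpha]\neq 1$ in $\mathcal{A}$, hence an ultrafilter $u_0\in\ufilt(\mathcal{A})$ with $[\alpha]\notin u_0$.

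Now take the canonical (sorted, ultrafilter) frame $\mathfrak{F}_c=(W,I_c,W')$ with $W=\ufilt(\mathcal{A})$, $W'=\ufilt(\mathcal{B})$ and $I_c,R^{111}$ defined as in the discussion preceding Lemma~\ref{subML frame lemma}, together with the canonical valuation $\val{P_i}_c=\{w\in W\midsp [P_i]\in w\}$ and $\yvval{Q_i}_c=\{w'\in W'\midsp [Q_i]\in w'\}$. The D-condition~\eqref{D-cond} holds in $\mathfrak{F}_c$ since it is precisely the canonical-frame reading of the D-axioms (exactly as in the completeness proof for the base logic {\bf ML}$_2$), and Lemma~\ref{subML frame lemma} says $\mathfrak{F}_c$ lies in the frame class $\mathbb{SUB}$; so $\mathfrak{M}_c=(\mathfrak{F}_c,\val{\;}_c)$ is a genuine {\bf sub.ML}$_2$-model over a $\mathbb{SUB}$-frame. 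By Lemma~\ref{subML int lemma} the canonical interpretation validates every recursive clause of Table~\ref{sat modal}, so the usual induction on sentences (the truth lemma) gives $\val{\gamma}_c=\{w\in W\midsp [\gamma]\in w\}$ for every first-sort $\gamma$, dually for the second sort. In particular $u_0\notin\val{\alpha}_c$, so $\val{\alpha}_c\neq W$ and $\alpha$ fails in $\mathfrak{M}_c$. Contrapositively, $\models\alpha$ implies $\proves\alpha$, symmetrically for second-sort sentences, and together with Theorem~\ref{subML2 soundness} this is the claimed soundness and completeness.

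The only genuinely delicate points are those already dispatched in the two preceding lemmas, and the harder of them is the clause for $\odot$. First, the non-elementary associativity constraint (C1) for {\bf fl.ML}$_2$ holds in $\mathfrak{F}_c$ because $a\circ(b\circ c)=(a\circ b)\circ c$ is a canonical identity; Lemma~\ref{sub frame lemma} proves this through the filter point-operator $\widehat{\circ}$ and its agreement with the $\sigma$-extension of $\circ$ on stable sets, and the same reasoning applies on the ultrafilter frame. Second, the existential satisfaction clause for $\odot$ in the truth lemma: the forcing direction --- $[\alpha]\circ[\alpha']\in u$ must yield ultrafilters $w\ni[\alpha]$, $w'\ni[\alpha']$ with $uRww'$ --- is classically settled by a Zorn's-lemma maximalization, but Lemma~\ref{subML int lemma} instead identifies the relational operator with the $\sigma$-extension $\circ^\sigma_u$ of $\circ$ and leans on the uniqueness of canonical extensions of lattice expansions (Lemma~\ref{eta morphism}) to pass between the filter-based and ultrafilter-based extensions; the residual clauses for $\lfspoon,\rfspoon$ then follow from the $\odot$ clause via Lemma~\ref{modal residuals lemma}. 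With these in hand the theorem is just the assembly above.
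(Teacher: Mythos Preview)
Your proposal is correct and follows precisely the paper's approach: the theorem in the paper is stated immediately after Lemmas~\ref{subML frame lemma} and~\ref{subML int lemma} with no further argument beyond ``Hence we may conclude'', so completeness is meant to follow by the standard canonical-model assembly from those two lemmas, exactly as you spell out. Your write-up simply makes explicit the routine glue (Lindenbaum--Tarski algebra, ultrafilter frame, truth lemma, contrapositive) that the paper leaves implicit.
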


By Theorem \ref{properties of trans} and by the soundness and completeness results proven above for each substructural logic {\bf SUB} and its companion modal logic {\bf sub.ML}$_2$,  we obtain  that
\begin{coro}\rm
$\varphi\proves\psi$ in {\bf SUB} iff $\varphi^\bullet\proves\psi^\bullet$ iff $\psi^\circ\vproves\varphi^\circ$ in {\bf sub.ML}$_2$.\telos
\end{coro}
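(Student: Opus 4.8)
The plan is to chain the semantic equivalence of Theorem \ref{properties of trans} together with the soundness and completeness theorems for {\bf SUB} and for {\bf sub.ML}$_2$, all of which have been established over the \emph{same} class $\mathbb{SUB}$ of frames. Concretely, I would verify the following biconditionals and then compose them. (i) $\varphi\proves\psi$ in {\bf SUB} iff $\varphi\forces\psi$ holds in every {\bf SUB}-model on every $\mathbb{SUB}$-frame; this is exactly soundness (Theorem \ref{sub soundness}) together with completeness (Theorem \ref{sub completeness}). (ii) $\varphi\forces\psi$ holds in every {\bf SUB}-model on every $\mathbb{SUB}$-frame iff $\varphi^\bullet\models\psi^\bullet$ holds in every {\bf sub.ML}$_2$-model on every $\mathbb{SUB}$-frame. (iii) $\varphi^\bullet\models\psi^\bullet$ holds in all such models iff $\psi^\circ\models\varphi^\circ$ does; this is claim 3) of Theorem \ref{properties of trans}, read uniformly over all models. (iv) $\varphi^\bullet\models\psi^\bullet$ holds in all such models iff $\varphi^\bullet\proves\psi^\bullet$ in {\bf sub.ML}$_2$, by soundness (Theorem \ref{subML2 soundness}) and completeness of {\bf sub.ML}$_2$ in $\mathbb{SUB}$, applied to sentences of the first sort. (v) $\psi^\circ\models\varphi^\circ$ holds in all such models iff $\psi^\circ\vproves\varphi^\circ$ in {\bf sub.ML}$_2$, by the same soundness and completeness results applied to sentences of the second sort. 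Composing (i)--(v) yields $\varphi\proves\psi$ in {\bf SUB} iff $\varphi^\bullet\proves\psi^\bullet$ iff $\psi^\circ\vproves\varphi^\circ$ in {\bf sub.ML}$_2$, which is the claim.

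Only step (ii) requires a short argument, and it follows from Theorem \ref{properties of trans} once one observes that the passage between {\bf SUB}-models and {\bf sub.ML}$_2$-models is tight in both directions. For the direction ``$\varphi\forces\psi$ everywhere $\Rightarrow$ $\varphi^\bullet\models\psi^\bullet$ everywhere'': given any {\bf sub.ML}$_2$-model $\mathfrak{M}=(\mathfrak{F},\imath)$ on a $\mathbb{SUB}$-frame, form the {\bf SUB}-model $\mathfrak{N}$ on $\mathfrak{F}$ with $V(p_i)=\lbbox\largediamond\imath(P_i)$; by claim 1) of Theorem \ref{properties of trans}, $\val{\varphi^\bullet}_\mathfrak{M}=\val{\varphi}_\mathfrak{N}$ and $\val{\psi^\bullet}_\mathfrak{M}=\val{\psi}_\mathfrak{N}$, so $\val{\varphi}_\mathfrak{N}\subseteq\val{\psi}_\mathfrak{N}$ gives $\val{\varphi^\bullet}_\mathfrak{M}\subseteq\val{\psi^\bullet}_\mathfrak{M}$. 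For the converse: given any {\bf SUB}-model $\mathfrak{N}=(\mathfrak{F},V)$ on a $\mathbb{SUB}$-frame, regard it as the {\bf sub.ML}$_2$-model $\mathfrak{M}=(\mathfrak{F},\imath)$ with $\imath(P_i)=V(p_i)$ (the interpretation of the second-sort variables being irrelevant, since the translations only involve the $P_i$); since every $V(p_i)$ is Galois stable, $\lbbox\largediamond\imath(P_i)=\lbbox\largediamond V(p_i)=V(p_i)$, so the {\bf SUB}-model reconstructed from $\mathfrak{M}$ in Theorem \ref{properties of trans} is $\mathfrak{N}$ itself, and again $\val{\varphi}_\mathfrak{N}=\val{\varphi^\bullet}_\mathfrak{M}$, $\val{\psi}_\mathfrak{N}=\val{\psi^\bullet}_\mathfrak{M}$, so $\varphi^\bullet\models\psi^\bullet$ everywhere yields $\varphi\forces\psi$ everywhere.

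The genuine obstacle is not this final composition but the groundwork it rests on: step (ii) is legitimate only because \emph{completeness of both {\bf SUB} and {\bf sub.ML}$_2$ is available over one and the same frame class} $\mathbb{SUB}$. This is precisely why separation and reducedness were not imposed on the frames and why modal definability of the relational constraints (C1)--(C4) was checked (Lemma \ref{C and odot}): if the two logics were complete only over incomparable classes of structures, the bridge (ii) between provability on the substructural side and provability on the modal side would collapse. With that groundwork in place, (ii) and hence the corollary follow with no further computation.
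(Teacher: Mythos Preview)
Your proposal is correct and follows exactly the route the paper intends: compose Theorem \ref{properties of trans} with the soundness and completeness theorems for {\bf SUB} and {\bf sub.ML}$_2$ over the common frame class $\mathbb{SUB}$. You have simply made explicit the bridge (ii) between {\bf SUB}-models and {\bf sub.ML}$_2$-models that the paper leaves implicit, and your back-and-forth construction (taking $\imath(P_i)=V(p_i)$ and using stability of $V(p_i)$) is the right way to do it.
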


\section{Translation into Sorted First-Order Logic}
\label{fol trans}
Assume $V_x,V_y$ are countable, non-empty and disjoint sets and let
\[
{\mathcal  L}^1(V_x,V_y,{\bf P}^x_i, {\bf Q}^y_i, {\bf I}^{xy}, {\bf R}^{xxx}, =_x,=_y)
\]
be the two-sorted, first-order language with equality whose sorted variables are in $V_x$ (ranged over by $x,z$ etc),$V_y$ (ranged over by $y,v$ etc) and it includes sorted quantifiers $\forall^x,\forall^y$, a binary predicate {\bf I}$^{xy}$, unary predicates {\bf P}$^x_i$, {\bf Q}$^y_i$ (where sorting is indicated by the superscript) corresponding to the propositional variables of {\bf sub.ML}$_2$ and a three-place predicate ${\bf R}^{xxx}$.   Superscripts indicating sorting type will be dropped in the sequel, for simplicity of notation. A two-sorted first-order structure for ${\mathcal  L}^1$ is a structure $\langle X, Y, {\bf I}^I, {\bf P}^I_i, {\bf Q}^I_i, {\bf R}^I)$ where $X,Y$ are disjoint sets and ${\bf P}^I_i\subseteq X$, ${\bf R}^I\subseteq X^3$, ${\bf Q}^I_i\subseteq Y$, for each $i$, and ${\bf I}^I\subseteq X\times Y$, while $=^I_x,=^I_y$ are the identity relations for each sort.

The standard translation of {\bf sub.ML}$_2$ into {\bf FOL} (First-Order Logic) is exactly as in the single-sorted case, except for the relativization to two sorts, displayed in Table \ref{std-ML2}.

\begin{table}[!htbp]
\caption{Standard Translation of {\bf sub.ML}$_2$ into 2-Sorted {\bf FOL}}
\label{std-ML2}
\begin{tabbing}
$\stx{x}{P_i}$\hskip0.8cm\==\hskip2mm\= ${\bf P}_i(x)$ \\
$\stx{x}{\neg \alpha}$\>=\> $\neg\stx{x}{\alpha}$ \\
$\stx{x}{\alpha\wedge \alpha'}$\>=\> $\stx{x}{\alpha}\wedge\stx{x}{\alpha'}$ \\
$\stx{x}{\lbbox \beta}$ \>=\> $\forall y\;({\bf I}(x,y)\;\lra\;\sty{y}{\beta})$ \\
$\stx{x}{\alpha\odot\alpha'}$ \>=\> $\exists z,z'\;({\bf R}(x,z,z')\;\wedge\;\stx{z}{\alpha}\;\wedge\;\stx{z'}{\alpha'})$\\
$\stx{x}{\alpha\rfspoon\alpha'}$ \>=\> $\forall z,z'\;(\stx{z}{\alpha}\;\wedge\;{\bf R}(z',z,x)\;\lra\;\stx{z'}{\alpha'})$\\
$\stx{x}{\alpha'\lfspoon\alpha}$ \>=\> $\forall z,z'\;(\stx{z}{\alpha}\;\wedge\;{\bf R}(z',x,z)\;\lra\;\stx{z'}{\alpha'})$
\\[3mm]
$\sty{y}{Q_i}$\>=\> ${\bf Q}_i(y)$\\
$\sty{y}{\neg \beta}$ \>=\> $\neg\sty{y}{\beta}$\\
$\sty{y}{\beta\wedge \beta'}$\>=\> $\sty{y}{\beta}\wedge\sty{y}{\beta'}$\\
$\sty{y}{\largesquare \alpha}$ \>=\> $\forall x\;({\bf I}(x,y)\;\lra\;\stx{x}{\alpha})$
\end{tabbing}
\end{table}

Note that a {\bf sub.ML}$_2$-model $\mathfrak{M}=((X,I,Y, R^{111}),V)$ is also an ${\mathcal  L}^1$-structure, setting ${\bf I}^I=I, {\bf R}^I=R^{111}, {\bf P}^I_i=V(P_i), {\bf Q}^I_i=V(Q_i)$, so that the following is immediate.

\begin{prop}\rm
\label{ML2-L1}
For any {\bf sub.ML}$_2$ formulae $\alpha,\beta$ (of sort 1, 2, respectively),  {\bf sub.ML}$_2$-model $\mathfrak{M}=((X,I,Y,R^{111}),V)$ and  $x\in X, y\in Y$, $x\models_\mathfrak{M} \alpha$ iff $\mathfrak{M}\models\stx{z}{\alpha}[z:=x]$ and $y\models_\mathfrak{M} \beta$ iff $\mathfrak{M}\models\sty{v}{\beta}[v:=y]$.\telos
\end{prop}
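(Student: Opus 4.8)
The plan is to prove both biconditionals simultaneously by structural induction on the formula, taking as the single inductive claim the conjunction of the two statements (one for sentences of sort~$1$, one for sort~$2$). The key observation is that each line of the standard translation in Table~\ref{std-ML2} was deliberately set up as a literal first-order transcription of the corresponding clause defining $\models_\mathfrak{M}$ for {\bf sub.ML}$_2$ --- namely the Tarskian clauses for $\neg,\wedge$ in each sort together with the clauses of Table~\ref{sat modal}. Reading $\mathfrak{M}=((X,I,Y,R^{111}),V)$ as an $\mathcal{L}^1$-structure via ${\bf I}^I=I$, ${\bf R}^I=R^{111}$, ${\bf P}^I_i=V(P_i)$, ${\bf Q}^I_i=V(Q_i)$, and using that the sorted quantifiers $\forall^x,\exists^x$ range over $X$ and $\forall^y,\exists^y$ over $Y$, the induction reduces to unwinding definitions on both sides.

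First I would dispatch the base cases: for $\alpha=P_i$, $x\models_\mathfrak{M}P_i$ iff $x\in V(P_i)={\bf P}^I_i$ iff $\mathfrak{M}\models{\bf P}_i(z)[z:=x]=\stx{z}{P_i}[z:=x]$, and dually for $\beta=Q_i$. The Boolean cases $\neg,\wedge$ in each sort follow at once from the induction hypothesis and the clauses for $\neg,\wedge$ in the first-order semantics, noting that $\neg$ and $\wedge$ are translated sort-homogeneously. For the modal cases, $x\models_\mathfrak{M}\lbbox\beta$ iff $y\models_\mathfrak{M}\beta$ for all $y\in Y$ with $xIy$; applying the induction hypothesis to $\beta$ and using ${\bf I}^I=I$ this is exactly $\mathfrak{M}\models\forall y\,({\bf I}(x,y)\to\sty{y}{\beta})=\stx{z}{\lbbox\beta}[z:=x]$, and the case of $\largesquare\alpha$ is symmetric. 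For the residuated cases, $x\models_\mathfrak{M}\alpha\odot\alpha'$ iff there are $z,z'\in X$ with $xR^{111}zz'$, $z\models_\mathfrak{M}\alpha$, $z'\models_\mathfrak{M}\alpha'$; the induction hypothesis on $\alpha,\alpha'$ together with ${\bf R}^I=R^{111}$ yields precisely $\mathfrak{M}\models\exists z,z'\,({\bf R}(x,z,z')\wedge\stx{z}{\alpha}\wedge\stx{z'}{\alpha'})$. The clauses for $\lfspoon,\rfspoon$ go the same way, matching the quantifier pattern of Table~\ref{sat modal} against the translation line verbatim --- e.g. $x\models_\mathfrak{M}\alpha\rfspoon\alpha'$ iff $\forall z,z'\in X\,(z\models_\mathfrak{M}\alpha\wedge z'R^{111}zx\to z'\models_\mathfrak{M}\alpha')$.

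There is no genuine obstacle: this is a soundness-of-translation lemma whose proof is pure bookkeeping. The only points needing a little care are (i)~keeping the two sorts of variables disjoint, so that, for instance, the $\forall y$ inside $\stx{z}{\lbbox\beta}$ genuinely ranges over $Y$ and the substitution $[z:=x]$ is legitimate with $z\in V_x$ and $x\in X$; and (ii)~the fact that $\neg,\wedge$ appear at both sorts forces the two biconditionals to be carried along in parallel rather than one being derived from the other. Both are handled automatically once the induction hypothesis is taken to be the pair of statements at once, as indicated above.
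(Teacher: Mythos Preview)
Your proposal is correct and is exactly the routine structural induction that the paper declines to spell out: the paper simply declares the result ``immediate'' (no proof is given beyond the end-of-proof marker), and your argument is precisely the obvious unwinding of definitions that justifies that claim. There is nothing to add or compare.
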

Composing with the translation and co-translation of {\bf SUB} into {\bf sub.ML}$_2$, respective translations of {\bf SUB} into ${\mathcal  L}^1$ are obtained, displayed in Table \ref{std-PLL}.
\begin{table}[!htbp]
\caption{Standard (2-Sorted) First-Order Translation and Co-Translation of {\bf SUB}}
\label{std-PLL}
\begin{tabbing}
$\stxb{x}{p_i}$ \hskip1.5cm\==\hskip2mm\= $\forall y\;({\bf I}(x,y)\lra\;\exists z\;({\bf I}(z,y)\;\wedge\;{\bf P}_i(z)))$\\
\hskip3mm $\styc{y}{p_i}$ \>=\>\hskip3mm  $\forall x\;({\bf I}(x,y)\;\lra\;\neg{\bf P}_i(x))$\\
$\stxb{x}{\top}$ \>=\> $x=x$\\
\hskip3mm $\styc{y}{\top}$ \>=\>\hskip3mm  $\forall x\;({\bf I}(x,y)\;\lra\; x\neq x)$\\
$\stxb{x}{\bot}$ \>=\> $\forall y\;({\bf I}(x,y)\;\lra\; y\neq y)$\\
\hskip3mm $\styc{y}{\bot}$ \>=\>\hskip3mm  $y=y$\\
$\stxb{x}{\varphi\wedge\psi}$ \>=\> $\stxb{x}{\varphi}\wedge\stxb{x}{\psi}$\\
\hskip3mm $\styc{y}{\varphi\wedge\psi}$ \>=\>\hskip3mm  $\forall x\;({\bf I}(x,y)\;\lra\;\exists v\;({\bf I}(x,v)\;\wedge\;(\styc{v}{\varphi}\vee\styc{v}{\psi})))$\\
$\stxb{x}{\varphi\vee\psi}$ \>=\>   $\forall y\;({\bf I}(x,y)\;\lra\;\exists z\;({\bf I}(z,y)\;\wedge\; (\stxb{z}{\varphi}\vee\stxb{z}{\psi})))$ \\
\hskip3mm $\styc{y}{\varphi\vee\psi}$ \>=\>\hskip3mm  $\styc{y}{\varphi}\wedge\styc{y}{\psi}$
\\
$\stxb{x}{\varphi\ra\psi}$ \>=\> $\forall z,z'\;(\stxb{z}{\alpha}\;\wedge\;{\bf R}(z',z,x)\;\lra\;\stxb{z'}{\alpha'})$\\
\hskip3mm $\styc{y}{\varphi\ra\psi}$ \>=\> $\forall x\;({\bf I}(x,y)\lra\neg\stxb{x}{\varphi\ra\psi})$
\\
$\stxb{x}{\varphi\circ\psi}$ \>=\> $\forall y\;({\bf I}(x,y)\;\lra\;\exists x'\;({\bf I}(x',y)\;\wedge\;\exists z,z'\;({\bf R}(x',z,z')\;\wedge$\\
\>\>\hskip3.5cm $\wedge\;\stxb{z}{\alpha}\;\wedge\;\stxb{z'}{\alpha'})$\\
\hskip3mm $\styc{y}{\varphi\circ\psi}$ \>=\>  $\forall x\;({\bf I}(x,y)\lra\neg\exists z,z'({\bf R}(x,z,z')\wedge\stxb{z}{\varphi}\wedge\stxb{z'}{\psi}))$
\\
$\stxb{x}{\psi\la\varphi}$ \>=\> $\forall z,z'\;(\stxb{z}{\alpha}\;\wedge\;{\bf R}(z',x,z)\;\lra\;\stxb{z'}{\alpha'})$ \\
\hskip3mm $\styc{y}{\psi\la\varphi}$ \>=\> $\forall x\;({\bf I}(x,y)\lra\neg\stxb{x}{\psi\la\varphi})$
\end{tabbing}
\end{table}

\begin{coro}\rm
\label{reduction coro}
For any {\bf SUB}-sentences $\varphi,\psi$, any {\bf sub.ML}$_2$-model (${\mathcal  L}^1$-structure) $\mathfrak{M}=((X,I ,Y,R^{111}),V)$ and corresponding {\bf SUB}-model $\mathfrak{N}$, any $x\in X, y\in Y$,
\begin{enumerate}
\item
$x\forces_\mathfrak{N}\varphi$ iff $x\models_\mathfrak{M}\varphi^\bullet$ iff $\mathfrak{M}\models\stxb{z}{\varphi}[z:=x]$
\item
$\varphi\forces_\mathfrak{N}\psi$  iff $\varphi^\bullet\models_\mathfrak{M}\psi^\bullet$ iff for any $x\in X$, if $\mathfrak{M}\models\stxb{z}{\varphi}[z:=x]$, then $\mathfrak{M}\models\stxb{z}{\psi}[z:=x]$
\item
$\forces_\mathfrak{N}\varphi$ iff $\models_\mathfrak{M}\varphi^\bullet$ iff $\mathfrak{M}\models\forall x\;\stxb{x}{\varphi}$
\item
$\mathfrak{M}\models\stxb{z}{\varphi}[z:=x]$ iff $\mathfrak{M}\models(\forall y\;({\bf I}(z,y)\;\lra\;\exists z'\;({\bf I}(z',y)\;\wedge$\\
\hspace*{7.5cm}$\wedge\;\stxb{z'}{\varphi})[z:=x]$ \\
and similarly for $\styc{v}{\varphi}$.\telos
\end{enumerate}
\end{coro}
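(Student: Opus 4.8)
The statement to establish is Corollary \ref{reduction coro}, which assembles results already proven into four bookkeeping claims. The plan is to derive it directly from Theorem \ref{properties of trans} (which relates $\forces_\mathfrak{N}$ and $\models_\mathfrak{M}$ through the translation $(\cdot)^\bullet$), Proposition \ref{ML2-L1} (which relates $\models_\mathfrak{M}$ in a {\bf sub.ML}$_2$-model to first-order satisfaction of the standard translation $\stx{z}{\cdot}$), and the observation recorded just before the corollary that the first-order translations $\stxb{x}{\varphi},\styc{y}{\varphi}$ of Table \ref{std-PLL} are by definition the compositions $\stx{z}{\varphi^\bullet}$ and $\sty{v}{\varphi^\circ}$ of the modal translation with the standard translation.

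First I would prove Claim 1. The equivalence $x\forces_\mathfrak{N}\varphi$ iff $x\models_\mathfrak{M}\varphi^\bullet$ is immediate from Theorem \ref{properties of trans}(1), since $\val{\varphi}_\mathfrak{N}=\val{\varphi^\bullet}_\mathfrak{M}$ and $x\forces_\mathfrak{N}\varphi$ iff $x\in\val{\varphi}_\mathfrak{N}$, while $x\models_\mathfrak{M}\varphi^\bullet$ iff $x\in\val{\varphi^\bullet}_\mathfrak{M}$. The second equivalence, $x\models_\mathfrak{M}\varphi^\bullet$ iff $\mathfrak{M}\models\stxb{z}{\varphi}[z:=x]$, follows from Proposition \ref{ML2-L1} applied to the formula $\alpha=\varphi^\bullet$ together with the syntactic identity $\stxb{z}{\varphi}=\stx{z}{\varphi^\bullet}$. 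Claim 2 is then obtained by chaining Claim 1 pointwise over all $x\in X$: $\varphi\forces_\mathfrak{N}\psi$ means $\val{\varphi}_\mathfrak{N}\subseteq\val{\psi}_\mathfrak{N}$, i.e. for all $x$, $x\forces_\mathfrak{N}\varphi$ implies $x\forces_\mathfrak{N}\psi$, and applying Claim 1 to each side converts this into the stated first-order condition; the equivalence with $\varphi^\bullet\models_\mathfrak{M}\psi^\bullet$ is likewise a restatement of $\val{\varphi^\bullet}_\mathfrak{M}\subseteq\val{\psi^\bullet}_\mathfrak{M}$. Claim 3 is the special case of Claim 2 obtained by taking $\varphi=\top$ (so that $\top\forces_\mathfrak{N}\psi$ is $\forces_\mathfrak{N}\psi$ and $\top^\bullet=\top$ is valid), rephrased with a universal quantifier in the object language.

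For Claim 4 the key fact is that $\varphi^\bullet$ is always a {\bf ML}$^\bullet_2$-sentence, i.e. of the form $\lbbox\largediamond\varphi^\bullet$ up to semantic equality, which was recorded in Theorem \ref{properties of trans}(1) as $\val{\varphi^\bullet}_\mathfrak{M}=\val{\lbbox\largediamond\varphi^\bullet}_\mathfrak{M}$; unwinding the standard translation of $\lbbox\largediamond(\cdot)$ using the clauses for $\lbbox$ and $\largediamond=\neg\largesquare\neg$ in Table \ref{std-ML2} (and noting $\largediamond\alpha$ has standard translation $\exists z'\,({\bf I}(z',y)\wedge\stx{z'}{\alpha})$) yields exactly the displayed first-order formula $\forall y\,({\bf I}(z,y)\lra\exists z'\,({\bf I}(z',y)\wedge\stxb{z'}{\varphi}))$, and the case for $\styc{v}{\varphi}$ is dual, using $\yvval{\varphi}_\mathfrak{N}=\yvval{\largesquare\lbdiamond\varphi^\circ}_\mathfrak{M}$ from Theorem \ref{properties of trans}(2). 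I do not expect a genuine obstacle here: the only care needed is matching up the bound-variable names and the sort annotations between the two translation tables, and verifying that the $\largediamond$, $\lbdiamond$ abbreviations expand to the existential first-order clauses as written; these are purely mechanical checks once Theorem \ref{properties of trans} and Proposition \ref{ML2-L1} are in hand.
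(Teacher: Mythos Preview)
Your proposal is correct and matches the paper's intent: the paper gives no proof at all for this corollary (it is closed immediately with the end-of-proof marker \telos), treating it as a direct consequence of Theorem~\ref{properties of trans}, Proposition~\ref{ML2-L1}, and the definition of $\stxb{x}{\cdot},\styc{y}{\cdot}$ as the composite translations. You have simply spelled out the bookkeeping that the paper leaves to the reader, and each step you describe is exactly the intended justification.
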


Compactness and the L\"{o}wenheim-Skolem properties for any substructural logic {\bf SUB} follow immediately from the reduction presented in this article.
\begin{thm}\rm
\label{results}
The following hold.
\begin{enumerate}
  \item (Compactness) A set of {\bf SUB}-sentences has a model iff every finite subset of it has a model
  \item (Upward L\"{o}wenheim-Skolem) If a set of {\bf SUB}-sentences has a countable model, then it has a model of cardinality $\kappa$, for any $\kappa>\aleph_0$
  \item (Downward L\"{o}wenheim-Skolem) If a set of {\bf SUB}-sentences has an infinite model, then it has a model of cardinality $\aleph_0$
\end{enumerate}
\end{thm}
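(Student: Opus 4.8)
The plan is to reduce all three assertions to the corresponding classical facts for two-sorted first-order logic, using the composed translation of Table~\ref{std-PLL} exactly as the preceding section sets it up. First I would fix, for the frame class $\mathbb{SUB}$ under consideration, a \emph{finite} set $\Delta$ of $\mathcal{L}^1$-sentences that axiomatizes it: the two sentences $\forall x\exists y\,\mathbf{I}(x,y)$ and $\forall y\exists x\,\mathbf{I}(x,y)$ expressing \eqref{D-cond}, the sentence $\exists x\,(x=x)$ forcing non-emptiness of the first sort (which, with \eqref{D-cond}, forces non-emptiness of the second), together with whichever of the relational constraints (C1)--(C4) carve out $\mathbb{SUB}$ according to Definition~\ref{frame classes}. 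The one point to verify here is that each of (C1)--(C4) is genuinely first-order over the signature $(\mathbf{I},\mathbf{R})$: (C1) is a biconditional between two existential formulas under a universal prefix, (C3) unfolds $z'\preceq x$ as $\forall y\,(\mathbf{I}(x,y)\to\mathbf{I}(z',y))$, and (C2), (C4) are immediate; so $\Delta$ is a legitimate finite first-order theory, with $|\Delta|\le 6$.

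Next I would set up a model-level dictionary: a set $\Sigma$ of {\bf SUB}-sentences has a {\bf SUB}-model iff the $\mathcal{L}^1$-theory $T_\Sigma:=\Delta\cup\{\,\forall x\,\stxb{x}{\varphi}\mid\varphi\in\Sigma\,\}$ has a model, and the correspondence keeps the carrier sets $X,Y$ fixed, hence preserves cardinality. From a {\bf SUB}-model $\mathfrak{N}=(\mathfrak{F},V)$ over a $\mathbb{SUB}$-frame $\mathfrak{F}=(X,\upv,Y,R^{111})$ one builds an $\mathcal{L}^1$-structure by interpreting $\mathbf{I}$ as the complement $I$ of $\upv$, $\mathbf{R}$ as $R^{111}$, $\mathbf{P}_i$ as the stable set $V(p_i)$, and each $\mathbf{Q}_i$ as $\emptyset$; because $V(p_i)$ is stable, the {\bf sub.ML}$_2$-model $\mathfrak{M}$ with $\imath(P_i)=V(p_i)$ has associated {\bf SUB}-model $\mathfrak{N}$ in the sense of Theorem~\ref{properties of trans}, and Corollary~\ref{reduction coro}(3) yields $\mathfrak{M}\models\forall x\,\stxb{x}{\varphi}$ for each $\varphi\in\Sigma$, while $\mathfrak{M}\models\Delta$ is exactly the assertion that $\mathfrak{F}$ is a $\mathbb{SUB}$-frame. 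Conversely, a model of $T_\Sigma$ \emph{is} an $\mathcal{L}^1$-structure whose reducts give a frame $\mathfrak{F}=(X,\upv,Y,R^{111})$ ($\upv$ being the complement of $\mathbf{I}$); the $\Delta$-part guarantees $\mathfrak{F}\in\mathbb{SUB}$, and Proposition~\ref{ML2-L1} together with Corollary~\ref{reduction coro} show that the induced {\bf SUB}-model (with $V(p_i)=\lbbox\largediamond\imath(P_i)$) satisfies all of $\Sigma$. That the $\mathbf{Q}_i$ can be ignored is because only the $P_i$ occur in the translation (Table~\ref{trans}); sequents $\varphi\forces\psi$, if one prefers to read ``{\bf SUB}-sentence'' that way, are handled identically via Corollary~\ref{reduction coro}(2).

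With the dictionary in hand, the three statements follow by transferring the classical results for (two-sorted) first-order logic invoked in the text \cite{enderton,FOL-mortimer}. For statement~1, any finite subset of $T_\Sigma$ lies inside $\Delta\cup\{\forall x\,\stxb{x}{\varphi}\mid\varphi\in\Sigma_0\}$ for a finite $\Sigma_0\subseteq\Sigma$ (as $\Delta$ is itself finite), and by the dictionary this finite theory is satisfiable iff $\Sigma_0$ has a {\bf SUB}-model; first-order compactness applied to $T_\Sigma$, followed by one more use of the dictionary, gives compactness for {\bf SUB}. For statements~2 and~3, apply the upward, respectively downward, L\"{o}wenheim--Skolem theorem for first-order logic to $T_\Sigma$ and read the output back as a {\bf SUB}-model through the carrier-preserving dictionary, the cardinality of a {\bf SUB}-model being measured by its carrier sets. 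I expect the only real subtlety to be the bookkeeping in the model dictionary: making sure $\Delta$ captures membership in $\mathbb{SUB}$ precisely (in particular that both sorts are forced non-empty, which \eqref{D-cond} alone does not do), and that routing through the intermediate {\bf sub.ML}$_2$-model leaves the carrier sets untouched so that cardinalities genuinely transfer in both directions.
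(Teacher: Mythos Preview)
Your proposal is correct and follows essentially the same route as the paper: push the set of {\bf SUB}-sentences through the composed translation of Section~\ref{fol trans}, invoke compactness and L\"{o}wenheim--Skolem for (two-sorted) first-order logic, and pull the resulting model back via Corollary~\ref{reduction coro} and the $V(p_i)=\lbbox\largediamond\imath(P_i)$ construction.

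The one genuine addition in your write-up is the explicit finite theory $\Delta$ pinning down the $\mathbb{SUB}$-frame class (the two \eqref{D-cond} sentences, non-emptiness, and the relevant subset of (C1)--(C4)). The paper's proof applies first-order compactness to $\Gamma^\bullet$ alone and then asserts that the resulting model $\mathfrak{M}=(\mathfrak{F},\imath)$ yields a {\bf SUB}-model $\mathfrak{N}$, tacitly assuming $\mathfrak{F}\in\mathbb{SUB}$; your inclusion of $\Delta$ in $T_\Sigma$ makes this step honest, and your check that (C1)--(C4) are first-order (in particular unfolding $z'\preceq x$ in (C3) as $\forall y\,(\mathbf{I}(x,y)\to\mathbf{I}(z',y))$) is exactly what is needed. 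This is a refinement of the paper's argument rather than a different one.
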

\begin{proof}
If $\Gamma$ is a set of {\bf SUB}-sentences and $\mathfrak{M}=((X,R,Y,R^{111}),V)$ is a {\bf SUB}-model of $\Gamma$, then clearly every finite subset of $\Gamma$ has a model, to wit $\mathfrak{M}$.
For the converse, if every finite subset $F$ of a set $\Gamma$  of {\bf SUB}-sentences has a model $\mathfrak{M}_F$, then every finite subset of $\Gamma^\bullet$ thereby has a model (the same model $\mathfrak{M}_F$), hence $\Gamma^\bullet$ has a model $\mathfrak{M}=(\mathfrak{F},\imath)$ by compactness of sorted  {\bf FOL} (cf \cite{enderton}) and then $\mathfrak{N}$, on the same frame $\mathfrak{F}$ and with interpretation $V$ defined by $V(p_i)=\lbbox\largediamond\imath(P_i)$, is a model of $\Gamma$.

The L\"{o}wenheim-Skolem properties follow by the same back and forth argument from {\bf SUB}-models to first-order structures, using Corollary \ref{reduction coro} and the L\"{o}wenheim-Skolem theorem for sorted first-order logic.
\end{proof}

\section{Sort Reduction and Decidability}
\label{sort reduction section}
Decidability can be deduced from the finite model property (FMP) of the two-variable fragment of {\bf FOL}, yet to the best of this author's knowledge the FMP property has only been proven for unsorted {\bf FOL}, hence sort-reduction needs to be performed (cf Enderton \cite{enderton}), extending ${\mathcal  L}^1$ with two unary predicates ${\bf T}_1, {\bf T}_2$, redefining first-order structures for ${\mathcal  L}^1$ so that ${\bf T}^I_1=X, {\bf T}^I_2=Y$ and relativising the translation of Table \ref{std-ML2}
{\small
\begin{tabbing}
$\mbox{ST}'_x(P_i)$\hskip0.8cm\==\hskip1mm\= ${\bf T}_1(x)\wedge{\bf P}_i(x)$ \\ $\mbox{ST}'_y(Q_i)$\>=\> ${\bf T}_2(y)\wedge{\bf Q}_i(y)$ \\
$\mbox{ST}'_x(\lbbox \beta)$ \>=\> $\forall y\;({\bf T}_2(y)\;\lra\;({\bf I}(x,y)\lra\mbox{ST}'_y(\beta)))$\\
$\mbox{ST}'_y(\largesquare \alpha)$ \>=\> $\forall x\;({\bf T}_1(x)\;\lra\;({\bf I}(x,y)\lra\mbox{ST}'_x(\alpha)))$\\
$\mbox{ST}'_x(\alpha\odot\alpha')$ \>=\> $\exists z,z'\;({\bf T}_1(z)\wedge{\bf T}_1(z')\wedge{\bf R}(x,z,z')\;\wedge\;\stx{z}{\alpha}\;\wedge\;\stx{z'}{\alpha'})$  \\
$\mbox{ST}'_x(\alpha\rfspoon\alpha')$ \>=\> $\forall z,z'\;({\bf T}_1(z)\wedge{\bf T}_1(z')\lra(\stx{z}{\alpha}\;\wedge\;{\bf R}(z',z,x)\;\lra\;\stx{z'}{\alpha'}))$ \\
$\mbox{ST}'_x(\alpha'\lfspoon\alpha)$ \>=\> $\forall z,z'\;({\bf T}_1(z)\wedge{\bf T}_1(z')\lra(\stx{z}{\alpha}\;\wedge\;{\bf R}(z',x,z)\;\lra\;\stx{z'}{\alpha'}))$
\end{tabbing}
}\noindent
with the obvious clauses for negation and conjunction. For a two-sorted first-order model $\mathfrak{M}$, let $\mathfrak{M}'$ be its corresponding single-sorted model, whose universe is the union of the universes of each sort of $\mathfrak{M}$. The translation by sort reduction is known to be full and faithful (cf Enderton \cite{enderton}).

\begin{thm}[Decidability and Complexity]\rm
A substructural logic {\bf SUB} has the finite model property and its satisfiability problem is decidable in nondeterministic exponential time.
\end{thm}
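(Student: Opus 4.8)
The plan is to turn \textbf{SUB}-satisfiability into first-order satisfiability and to inherit the finite model property and the complexity bound from the fragment of \textbf{FOL} into which the composed translation maps. First I would record the reduction. By Corollary~\ref{reduction coro} --- which rests on Theorem~\ref{properties of trans} and on the soundness and completeness of \textbf{SUB} and \textbf{sub.ML}$_2$ over a common frame class --- a \textbf{SUB}-sentence $\varphi$ is satisfiable in some $\mathbb{SUB}$-frame iff $\exists x\,\stxb{x}{\varphi}$ is satisfiable in some ${\mathcal L}^1$-structure whose relations obey \eqref{D-cond} and the constraints (C1)--(C4) of Table~\ref{constraints} relevant to the system at hand; and, since those frame conditions are themselves first-order sentences over ${\mathcal L}^1$, they may simply be conjoined to $\exists x\,\stxb{x}{\varphi}$. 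Performing the sort reduction passing from a two-sorted model to its single-sorted companion (after Enderton~\cite{enderton}, who shows this translation to be full and faithful) turns this into an equisatisfiability statement for a single-sorted first-order sentence $\mathrm{ST}'(\varphi)$, of size linear in $|\varphi|$ since only finitely many predicates occur.

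The crucial step is to place $\mathrm{ST}'(\varphi)$ in a fragment of single-sorted \textbf{FOL} that has the finite model property and a nondeterministic-exponential-time satisfiability bound. The Boolean cases and the clauses for $\lbbox,\largesquare$ are the familiar modal clauses, and with the usual recycling of variable names (relativised here by the sort predicates ${\bf T}_1,{\bf T}_2$) they fall inside the two-variable fragment $\mathrm{FO}^2$ --- exactly the setting of \cite{FOL-mortimer,FOL-2var,FOL-decision}, where $\mathrm{FO}^2$ is shown to have the finite model property and a $\mathrm{NEXPTIME}$-complete satisfiability problem. The clauses $\mathrm{ST}'_x(\alpha\odot\alpha')$, $\mathrm{ST}'_x(\alpha\rfspoon\alpha')$ and $\mathrm{ST}'_x(\alpha'\lfspoon\alpha)$ are existential, resp.\ universal, formulas \emph{guarded} by an ${\bf R}$-atom that mentions all the quantified variables, with each recursive subtranslation carrying a single free variable; so the whole image lies in a (loosely) guarded fragment over relations of arity at most three. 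I would then invoke the corresponding results for that fragment: it has the finite model property with models of at most exponential size, and, the arity being bounded, its satisfiability problem is decidable in deterministic exponential time, hence within $\mathrm{NEXPTIME}$.

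Given the fragment, the conclusions follow routinely. If $\mathrm{ST}'(\varphi)$, conjoined with the first-order frame conditions, is satisfiable then it has a finite model; re-reading that finite ${\mathcal L}^1$-structure as a finite $\mathbb{SUB}$-model via $V(p_i)=\lbbox\largediamond\imath(P_i)$ and applying Corollary~\ref{reduction coro} produces a finite $\mathbb{SUB}$-model of $\varphi$, so \textbf{SUB} has the finite model property. For the decision procedure, on input $\varphi$ one computes $\mathrm{ST}'(\varphi)$, adjoins the finitely many frame conditions, and runs the fragment's satisfiability algorithm; by the reduction this decides \textbf{SUB}-satisfiability within nondeterministic exponential time. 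The point that needs real care --- and the only one --- is the fragment identification: because the clauses for $\odot,\rfspoon,\lfspoon$ refer to the \emph{ternary} relation ${\bf R}^{xxx}$, and an ${\bf R}$-atom cannot be written with only two variable names, the image does not literally sit in $\mathrm{FO}^2$; the clean statement is membership in the bounded-arity (loosely) guarded fragment, whose finite model property and $\mathrm{EXPTIME}\subseteq\mathrm{NEXPTIME}$ satisfiability bound should then be cited in place of the two-variable results. Alternatively one may try to identify the auxiliary variables $z,z'$ of those three clauses with shadowed copies of the two running variables --- sound precisely because each $\mathrm{ST}'_z(\alpha)$ depends on $z$ alone --- so as to match the $\mathrm{FO}^2$ literature verbatim; carrying out (or circumventing) this squeeze is the substance of the proof, the rest being bookkeeping.
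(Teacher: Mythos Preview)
Your overall strategy coincides with the paper's: translate \textbf{SUB} through \textbf{sub.ML}$_2$ into (single-sorted) first-order logic via the standard translation and sort reduction, then inherit the finite model property and the $\mathrm{NEXPTIME}$ bound from a known decidable fragment. The paper's proof is terse: it simply invokes the two-variable fragment results of \cite{FOL-mortimer,FOL-2var,FOL-decision} and reads off the single-exponential model bound and the $\mathrm{NEXPTIME}$ complexity.

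Where you diverge is exactly where you are more careful than the paper. You correctly observe that the clauses $\mathrm{ST}'_x(\alpha\odot\alpha')$, $\mathrm{ST}'_x(\alpha\rfspoon\alpha')$, $\mathrm{ST}'_x(\alpha'\lfspoon\alpha')$ quantify over \emph{two} fresh variables $z,z'$ against a \emph{ternary} atom ${\bf R}(x,z,z')$, so the image does not literally lie in $\mathrm{FO}^2$; the paper does not address this and simply asserts the $\mathrm{FO}^2$ reduction. Your proposed remedy --- landing instead in the bounded-arity (loosely) guarded fragment, whose FMP and exponential-time satisfiability are established --- is the technically honest route and actually patches a gap the paper leaves open. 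Your alternative suggestion of recycling $z,z'$ back into the two running variables cannot work verbatim, since an ${\bf R}$-atom genuinely needs three distinct argument slots; so the guarded-fragment citation is the right repair, and your write-up is in that respect an improvement on the paper's own argument.
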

\begin{proof}
Decidability of the satisfiability problem for {\bf SUB}  follows from the finite model property of the two-variable fragment of first-order logic (assuming here sort-reduction for the standard translation of {\bf SUB}), proven in \cite{FOL-mortimer,FOL-2var,FOL-decision}.

Complexity bounds based on the FMP property are typically exponential. Indeed,  Mortimer \cite{FOL-mortimer} proved the finite model property of the two-variable fragment  with a doubly exponential bound on the model size. The result was improved in \cite{FOL-2var} by Gr\"{a}del, Kolaitis and Vardi to a single-exponential bound. Checking for satisfiability of a sentence in the two-variable fragment of {\bf FOL}, with models of at most exponential size, can be then performed in nondeterministic exponential time, as argued by B\"{o}rger, Gr\"{a}del and Gurevich in \cite{FOL-decision}.
\end{proof}

\section{Conclusions}
\label{conc-section}
This article, a follow-up to \cite{pll7} and based on the representation results of \cite{sdl-exp,discres}, initiated a project of modal translation of non-distributive logics, focusing on a class of familiar non-distributive systems collectively referred to as substructural logics. The modal translation problem for substructural logics has been also addressed by Do\v{s}en \cite{dosen-modal}, who embarked in a project of extending the GMT translation. Do\v{s}en's main result is an embedding of {\bf BCI}, {\bf BCK} and {\bf BCW} with intuitionistic negation into {\bf S4}-type modal extensions of versions of {\bf BCI}, {\bf BCK}, {\bf BCW}, respectively, all equipped with a classical-type negation. The approach, however, does not apply uniformly to substructural logics at large, as pointed out by Do\v{s}en in \cite{dosen-modal}.

In this article, the target modal system is a sorted, residuated modal logic which arises as the logical adjunction of classical propositional logic with the logic of residuated Boolean algebras \cite{residBA}. The result is extendible to the full Lambek-Grishin calculus and to just any non-distributive logic, based on the representation results of \cite{sdl-exp,discr}.

The approach and results in the present article have implications for the correspondence theory of substructural (non-distributive, more generally) logics, which can be addressed by considering the intermediate translation into the companion modal logic. Axioms in the substructural systems correspond to axioms in the modal companion systems, as shown, and frame conditions can be thereby calculated by employing the techniques of automated Shalqvist correspondence for modal logic.


\end{document}